\newtheorem{theorem}{Theorem}[section]
\newtheorem{lemma}[theorem]{Lemma}
\newtheorem{remark}[theorem]{Remark}
\newtheorem{proposition}[theorem]{Proposition}
\begin{document}

\title{Optimal error bounds on an exponential wave integrator Fourier spectral method for fractional nonlinear Schr\"{o}dinger equations with low regularity potential and nonlinearity}

\author{%
{\sc Junqing Jia}\thanks{ Email: 202390000016@sdu.edu.cn; matv178@nus.edu.sg}
 \\
School of Mathematics, Shandong University, Jinan 250100, China\\
Department of Mathematics, National University of Singapore, Singapore 119076, Singapore\\
{\sc Xiaoyun Jiang}\thanks{Corresponding author. Email: wqjxyf@sdu.edu.cn}
 \\
School of Mathematics, Shandong University, Jinan 250100, China\\
}

\maketitle

\begin{abstract}
{We establish optimal error bounds on an exponential wave integrator (EWI) for the space fractional nonlinear Schr\"{o}dinger equation (SFNLSE) with low regularity potential and/or nonlinearity. For the semi-discretization in time, under the assumption of $L^\infty$-potential, $C^1$-nonlinearity, and $H^\alpha$-solution with $1<\alpha \leq 2$ being the fractional index of $(-\Delta)^\frac{\alpha}{2}$, we prove an optimal first-order $L^2$-norm error bound $O(\tau)$ and a uniform $H^\alpha$-norm bound of the semi-discrete numerical solution, where $\tau$ is the time step size. We further discretize the EWI in space by the Fourier spectral method and obtain an optimal error bound in $L^{2}$-norm $O(\tau+h^{m})$ without introducing any CFL-type time step size restrictions, where $h$ is the spatial step size, $m$ is the regularity of the exact solution. Moreover, under slightly stronger regularity assumptions, we obtain optimal error bounds $O(\tau)$ and $O(\tau+h^{m-{\frac{\alpha}{2}}})$ in $H^\frac{\alpha}{2}$-norm, which is the norm associated to the energy. Extensive numerical examples are provided to validate the optimal error bounds and show their sharpness. We also find distinct evolving patterns between the SFNLSE and the classical nonlinear Schr\"{o}dinger equation.}

\noindent \emph{Keywords}:
{Space fractional nonlinear Schr\"{o}dinger equation; Low regularity potential and nonlinearity; Optimal error bound; Exponential wave integrator; Fourier spectral method.}
\end{abstract}

\section{Introduction}
In this paper, we consider the following space fractional nonlinear Schr\"{o}dinger equation (SFNLSE) \citep{Zhai2019} on a bounded domain $\Omega=\Pi_{j=1}^d \left(a_j, b_j\right)^{d} \subset \mathbb{R}^d \ (d=1,2,3)$ equipped with periodic boundary condition:
\begin{equation}\label{e1.1}
\left\{
\begin{aligned}
	&i \partial_t \psi(\boldsymbol{x}, t)=(-\Delta)^{\frac{\alpha}{2}} \psi(\boldsymbol{x}, t)+V(\boldsymbol{x}) \psi(\boldsymbol{x}, t)+f\left(|\psi(\boldsymbol{x}, t)|^2\right) \psi(\boldsymbol{x}, t),&& \boldsymbol{x} \in \Omega, \\
	&\psi(\boldsymbol{x}, 0)=\psi_0(\boldsymbol{x}), && \boldsymbol{x} \in \overline{\Omega},
\end{aligned}
\right.
\end{equation}
where ${i}$ is the imaginary unit, $\boldsymbol{x} = (x_1, \cdots, x_d)^{\rm T} \in \Omega$ with $\boldsymbol{x} = x$ when $d=1$ is the spatial coordinate, $t \geq 0$ is time, and $\psi=\psi(\boldsymbol{x}, t)$ is a complex-valued wave function. The operator $(-\Delta)^{\frac{\alpha}{2}}(1<\alpha\leq2)$ is the space fractional Laplacian \citep{Ainsworth2017} defined in the phase space through its action on the Fourier coefficients:
\begin{equation*}\label{eq1.2}
\widehat{\left((-\Delta)^{\frac{\alpha}{2}} \phi\right)}_{\boldsymbol{k}}=|\mu_{\boldsymbol{k}}|^\alpha \widehat{\phi}_{\boldsymbol{k}}, \quad (\mu_{\boldsymbol{k}})_j=\frac{2 \pi k_j}{b_j - a_j}, \quad \boldsymbol{k} = (k_1, \cdots, k_d)^{\rm T} \in \mathbb{Z}^d,
\end{equation*}
where $| \cdot |$ is the Euclidean norm on $\mathbb{R} ^d$, and $\widehat{\phi}$ is the Fourier transform defined as
\begin{equation}\label{eq:FT}
	\widehat{\phi}_{\boldsymbol{k}}=\frac{1}{|\Omega|} \int_{\Omega} \phi(\boldsymbol{x}) {e}^{-i \mu_{\boldsymbol{k}} \cdot (\boldsymbol{x}-\boldsymbol{a})} \rm d \boldsymbol{x}, \quad \boldsymbol{k} \in \mathbb{Z}^d,
\end{equation}
with $|\Omega| = \Pi_{j=1}^d(b_j-a_j)$ being the volume of $\Omega$ and $\boldsymbol{a} = (a_1, \cdots, a_j)^{\rm T}$. The potential $V=V(\boldsymbol{x})$ is real-valued and time-independent, and the nonlinearity $f = f(\rho)$ with $\rho = |\psi|^2$ being the density is also real-valued. We shall focus on low regularity potential and/or nonlinearity in this work, and we assume that $V \in L^\infty(\Omega)$ and $f$ is the typical power-type nonlinearity of the form
\begin{equation}\label{eq:f}
	f(\rho) = \beta \rho^\sigma, \qquad \beta \in \mathbb{R}, \quad \sigma>0,
\end{equation}
where $\beta$ is a given parameter characterizing the nonlinear interaction ($\beta > 0$ for repulsive interaction and $\beta < 0$ for attractive interaction), and $\sigma$ is the exponent of nonlinearity. Note that when $0<\sigma<\frac{1}{2}$, the nonlinearity $f(|\psi|^2)\psi$ is only $C^1$ in the real sense.

When $\alpha=2, f(\rho)=\beta \rho$, the SFNLSE \eqref{e1.1} reduces to the classcial cubic nonlinear Schr\"{o}dinger equation (NLSE) or the Gross-Pitaevskii equation (GPE), which has been extensively employed for modeling and simulation in Bose-Einstein condensation (BEC) \citep{Bao2013a}, quantum mechanics \citep{Erdos2007}, and nonlinear optics \citep{Sulem1999}. And it has been thoroughly investigated using various numerical methods, including the finite difference method \citep{Akrivis1993,Antoine2013,Bao2013b}, the exponential wave integrator (EWI) \citep{Bao2014a,Celledoni2008}, the time-splitting method \citep{Antoine2013,Bao2023a,Bao2022,Besse2002a,Lubich2008a}, the finite element method \citep{Akrivis1991,Henning2017,Tourigny1991}, etc.

In recent decades, the SFNLSE \eqref{e1.1} has found numerous applications in various physical phenomena, including water wave dynamics \citep{Ionescu2014}, L\'{e}vy processes \citep{Laskin2000}, optics \citep{Longhi2015}, and quantum mechanics \citep{Pinsker2015} and so on. The nonlocality of the SFNLSE allows for the description of phenomena distinct from those modeled by the classical NLSE \citep{Duo2016,Longhi2015}. When the nonlinearity is of the Hartree type, it has been employed to model the dynamics of Boson stars \citep{Bao2011,Feng2021c}. For further details on the physical background and theoretical analysis of the SFNLSE, we refer the reader to \citep{Antoine2016,Benson2011,Klein2014,Zhang2015} and the references therein.

There is an extensive body of both analytical and numerical results in the literature concerning the SFNLSE \eqref{e1.1}. From an analytical standpoint, exact solutions and the local and/or global well-posedness of the Cauchy problem have been comprehensively studied in \citep{Guo2008,Jiang2011}. On the numerical side, a variety of accurate and efficient numerical methods have been proposed and analyzed \citep{Feng2021c,Fu2023,Jia2023,Li2018,Wang2015,Zhai2019}. Additionally, for numerical studies of other fractional equations, we can refer to \citep{Acosta2018,Ervin2018,Mustapha2018,Zhang2023} and the references therein.
In previous studies of the SFNLSE, both $V$ and $f$ in \eqref{e1.1} are generally assumed to be smooth. However, in various physical applications, $V$ and $f$ may exhibit low regularity. For instance, in several physical contexts, the low-regularity $L^{\infty}$-potentials often take the form of discontinuous square-well or step potentials. In the study of BEC under different trapping shapes, a commonly used potential is the power-law potential $V(\boldsymbol{x})=$ $|\boldsymbol{x}|^\gamma(\gamma>0)$ \citep{BaYindir1999,Pinkse1997}. Furthermore, disorder potentials play a critical role in analyzing phenomena such as the Josephson effect and Anderson localization \citep{Sanchez-Palencia2007,Zapata1998}. Low-regularity nonlinearities, such as $f(\rho)=\rho^\sigma(\sigma>0)$ or $f(\rho)=\rho \ln \rho$, also arise in various models. These include the Schr\"{o}dinger-Poisson-X$\alpha$ model \citep{Bao2003,Bokanowski1999}, the Lee-Huang-Yang correction \citep{Lee1957}, which has been used to model and simulate quantum droplets \citep{Astrakharchik2018,Petrov2016}, and the mean-field model for Bose-Fermi mixtures \citep{Cai2013,Hadzibabic2002}.

For the classical NLSE with low regularity potential and/or nonlinearity, and some low regularity problems with rough solutions, several studies have been conducted \citep{Bao2023,Bao2024,Bao2024sEWI,Henning2017,Li2023,Li2024}. In particular, Bao and Wang \citep{Bao2024} establish optimal error bounds on a first-order EWI for the NLSE with $L^{\infty}$-potential and/or locally Lipschitz nonlinearity. These error bounds significantly improve the existing results for the NLSE with low regularity potential and/or nonlinearity. In contrast to the traditional $H^{1}$-norm framework, our work addresses a significant theoretical challenge by developing a novel analysis of the error in the $H^{\frac{\alpha}{2}}$-norm. Additionally, we rigorously establish a previously unexplored inequality between the nonlinearity exponent $\sigma$ and the fractional power $\alpha$, thus advancing the theory of fractional Schr\"{o}dinger equations with low regularity potentials and/or nonlinearities. Moreover, using the Fourier spectral method to further discretize the EWI in space, we show that the spatial error is always optimal with respect to the regularity of the exact solution.

For simplicity of the presentation, we carry out the analysis in the one-dimensional (1D) setting. The results can be easily generalized to two dimensions (2D) and three dimensions (3D) with modifications, for more details, please refer to Remark \ref{rem4.4} and the Appendix at the end of the article. For the convenience of the reader, we summarize our main results in 1D as follows:
\begin{enumerate}[label=(\roman*)]
\item For the semi-discretization in time, assuming $L^\infty$-potential, $\sigma > 0$, and $H^\alpha$-solution, we establish an optimal $O(\tau)$ convergence in $L^2$-norm together with a uniform $H^{\alpha}$-norm bound of the numerical solution, which implies a  $O(\tau^{\frac{1}{2}})$ convergence in the energy norm $H^\frac{\alpha}{2}$ (Theorem \ref{th3.1}).

\item For the fully discrete scheme with the Fourier spectral method for spatial discretization, under the same regularity assumptions as above, we show, in addition to the temporal convergence stated above, the spatial convergence is of $O(h^{m})$ order in $L^2$-norm and of $O(h^{m-\frac{\alpha}{2}})$ order in $H^\frac{\alpha}{2}$-norm, where $m \geq \alpha$ is the actual regularity of the exact solution (Theorem \ref{th4.1}). Notably, this result is obtained without imposing any coupling conditions between the time step size $\tau$ and the mesh size $h$.

\item Assuming stronger regularity of $H^\frac{\alpha}{2}$-potential, $\sigma > \frac{\alpha}{4}$, and $H^{\frac{3\alpha}{2}}$-solution or $H^{m}$-solution $(m\geq\frac{3\alpha}{2})$, we also prove optimal convergence orders $O(\tau)$ and $O(\tau+h^{m-{\frac{\alpha}{2}}})$ in the energy norm $H^\frac{\alpha}{2}$ for both the semidiscretiztion and the full discretization (Theorem \ref{th3.1} and \ref{th4.1}), respectively.

\end{enumerate}

The remainder of this paper is organized as follows. Section 2 shows the numerical method, first, we give some preliminaries, then we introduces the semi-discretization in time based on the first-order EWI, followed by a full discretization using the Fourier spectral method in space. In Section 3, we present some relevant lemmas necessary for the theoretical framework. Section 4 rigorously establishes the error estimates for both the semi-discretization and full discretization schemes. Section 5 provides numerical results that validate the theoretical error estimates and highlight significant differences in the evolving patterns between the SFNLSE and the classical NLSE. Finally, Section 6 concludes the study. Throughout the paper, we use $C(M)$ to denote a generic positive constant depending on the parameter $M$ but independent of the time step size $\tau$ and the mesh size $h$. The notation $A \lesssim B$  implies the existence of a generic constant $C>0$ such that $|A| \leq C B$.

\section{Numerical method}
As mentioned before, for simplicity,  we present the numerical scheme and the error analysis in 1D case, and take $\Omega=(a, b)$. The generalizations to 2D and 3D are straightforward (see the discussion in Remark \ref{rem4.4} ).
\subsection{Preliminary}
Let $\tau>0$ be the time step and $h=(b-a)/N$ be the mesh size, where $N$ is an even positive integer. The grid points are denoted as
\begin{center}
$t_{n}:=n\tau, \quad n=0,1,2,\cdots; \quad x_{j}:=a+jh, \quad j=0,1,2,\cdots,N$.
\end{center}
Define the index set
\begin{equation*}
{T}_N:=\left\{-\frac{N}{2},-\frac{N}{2}+1, \ldots, \frac{N}{2}-1\right\},
\end{equation*}
and denote
\begin{equation*}
C_{\textrm{per}}(\Omega):=\{\phi\in C(\overline{\Omega}) : \phi(a)=\phi(b)\},
\end{equation*}
\begin{equation*}
X_{N}:=\{\phi=(\phi_{0},\phi_{1},\ldots, \phi_{N})^{T}\in\mathbb{C}^{N+1} : \phi_{0}=\phi_{N}\},
\end{equation*}
\begin{equation*}
Y_{N}:=\textrm{span}\left\{{e}^{i\mu_{l}(x-a)}: l \in {T}_N\right\}, \quad \mu_{l}=\frac{2\pi l}{b-a}.
\end{equation*}
Let $P_{N}: L^{2}(\Omega)\rightarrow Y_{N}$ be the standard $L^{2}$-projection operator, $I_{N}:C_{\textrm{per}}(\Omega)\rightarrow Y_{N}$ or $I_{N}:X_{N}\rightarrow Y_{N}$ be the trigonometric interpolation operator \citep{Canuto2006}, i.e.,
\begin{equation}\label{eq2.1.1}
(P_{N}\phi)(x)=\sum_{l \in  {T}_N}\widehat{\phi}_{l}{e}^{i\mu_{l}(x-a)}, \quad (I_{N}\phi)(x)=\sum_{l \in {T}_N}\widetilde{\phi}_{l}{e}^{i\mu_{l}(x-a)}, \quad x\in \overline{\Omega},
\end{equation}
where $\widehat{\phi}$ is the Fourier transform of $\phi$ defined in \eqref{eq:FT} and $\widetilde{\phi}$ is the discrete Fourier transform defined as
\begin{equation}\label{eq2.1.2}
\widetilde{\phi}_{l}=\frac{1}{N}\sum_{j=0}^{N-1}\phi_j {e}^{-i\mu_{l}(x_{j}-a)},
\end{equation}
where $\phi_{j}=\phi(x_j)$ for $0 \leq j \leq N$.

For $s\geq 0$,  we denote by $H^{s}_{\mathrm{per}}(\Omega)$ the periodic Sobolev space with norm and semi-norm \citep{Canuto2006,Feng2021c}
\begin{equation}\label{eq2.1.3}
\|\phi\|_{H^s}^{2}=\sum_{l\in\mathbb{Z}}(1+\mu_{l}^{2})^{s}|\widehat{\phi_{l}}|^{2}, \quad |\phi|_{H^s}^{2}=\sum_{l\in\mathbb{Z}}|\mu_{l}|^{2s}|\widehat{\phi_{l}}|^{2}, \quad \phi \in L^2(\Omega),
\end{equation}
and $H^{0}_{\mathrm{per}}(\Omega)=L^{2}(\Omega)$.

\subsection{Temporal semi-discretization by the EWI}
To simplify the notation, we let $\psi(t) = \psi(x, t)$ and define an operator
\begin{equation}
	G(\phi) = f(|\phi|^2)\phi = |\phi|^{2\sigma}\phi, \quad \phi \in L^2(\Omega).
\end{equation}
By Duhamel's formula, the exact solution to the SFNLSE \eqref{e1.1} is expressed as
\begin{align}
\psi(t_{n+1})
&=e^{i \tau \langle\nabla\rangle_{\alpha}} \psi(t_n) \notag \\
&\quad -i \int_0^\tau e^{i(\tau-s) \langle\nabla\rangle_{\alpha}}[V \psi(t_n+s)+G(\psi(t_n+s))] \textrm{d}s, \quad n \geq 0,  \label{e2.1}
\end{align}
where $\langle\nabla\rangle_{\alpha}=-(-\Delta)^{\frac{\alpha}{2}}$.

Let $\psi^{[n]} $ be the approximation of $\psi (t_n)$ for $n \geq 0$. Adopting the approximation $\psi(t_n+s) \approx \psi(t_n)$ in the integrand of \eqref{e2.1} and performing an exact integration of ${e}^{i(\tau-s) \langle\nabla\rangle_{\alpha}}$, we obtain a semi-discretization in time by the first-order EWI as
\begin{equation}\label{e2.2}
\begin{aligned}
\psi^{[n+1]} & ={e}^{i \tau \langle\nabla\rangle_{\alpha}} \psi^{[n]}-i \tau \varphi_1(i \tau \langle\nabla\rangle_{\alpha})\left(V \psi^{[n]}+G(\psi^{[n]}) \right), \quad  n \geq 0, \\
\psi^{[0]} & =\psi_0,
\end{aligned}
\end{equation}
where, $\varphi_1$ is an entire function defined as
$$
\varphi_1(z)=\frac{{e}^z-1}{z}, \quad z \in \mathbb{C} .
$$
The operator $\varphi_1(i \tau \langle\nabla\rangle_{\alpha})$ is defined by its action in the Fourier space as
\begin{align}\label{e2.3}
(\varphi_1(i \tau \langle\nabla\rangle_{\alpha}) v)(x)
& =\sum_{l \in \mathbb{Z}} \varphi_1(-i \tau |\mu_l|^{\alpha}) \widehat{v}_l {e}^{i \mu_l(x-a)} \notag \\
& =\widehat{v}_0+\sum_{l \in \mathbb{Z} \backslash\{0\}} \frac{1-{e}^{-i \tau |\mu_l|^{\alpha}}}{i \tau |\mu_l|^{\alpha}} \widehat{v}_l {e}^{i \mu_l(x-a)}, \quad x \in \Omega.
\end{align}
We define the semi-discrete numerical flow $\Phi^\tau$ associated with the EWI \eqref{e2.2} as
\begin{equation}\label{e2.3-1}
	\Phi^\tau(\phi) = e^{i \tau \langle\nabla\rangle_{\alpha}} \phi-i \tau \varphi_1(i \tau \langle\nabla\rangle_{\alpha})\left(V \phi +G(\phi) \right), \quad \phi \in L^2(\Omega).
\end{equation}
From \eqref{e2.3}, observing that $\left|1-{e}^{-i \theta}\right| \leq 2$ for $\theta \in \mathbb{R}$, we derive the following estimate
\begin{equation}\label{e2.5}
\left|\widehat{(\varphi_1 (i \tau \langle\nabla\rangle_{\alpha}) v)}_l\right| \leq \begin{cases}
	\frac{2}{\tau} \frac{|\widehat{v}_l|}{|\mu_l|^{\alpha}}, & l \in \mathbb{Z} \backslash\{0\}, \\ |\widehat{v}_0|, & l=0,
\end{cases}
\end{equation}
which implies $\varphi_1(i \tau \langle\nabla\rangle_{\alpha}) v \in H_{\text{per}}^{\alpha}(\Omega)$ for all $v \in L^2(\Omega)$. Consequently, $\Phi^\tau$ defines a flow on $H_{\text{per}}^{\alpha}(\Omega)$ for any $V \in L^{\infty}(\Omega)$ and $\sigma >0$, enabling the establishment of a uniform $H^\alpha$-bound for the semi-discrete solution using the new analytical techniques that will be introduced later.

\subsection{Full discretization by the Fourier spectral method in space}
We further discretize the semi-discretization \eqref{e2.2} in space by the Fourier spectral (FS) method to obtain a full discretization scheme. Usually, the Fourier pseudospectral (FP) method is used for spatial discretization (see, e.g., \citep{Bao2014a,Feng2023}), which can be efficiently implemented with FFT. However, due to the low regularity of potential and nonlinearity, it is very hard to establish error estimates of the FP method, and severe order reduction can be observed numerically \citep{Bao2024a}.

Let $\psi^n \in Y_N$ be the approximation of $\psi(t_n)$ for $n \geq 0$. Then the EWI-Fourier spectral (EWI-FS) method reads
\begin{equation}\label{e2.12}
	\begin{aligned}
		\widehat{(\psi^{n+1})}_l & ={e}^{-i \tau |\mu_l|^{\alpha}}{\widehat{(\psi^n)}_l}-i \tau \varphi_1(-i \tau |\mu_l|^{\alpha})(\widehat{(V \psi^n)}_l+\widehat{G(\psi^n)}_l), \quad n \geq 0, \\
		\widehat{(\psi^0)}_l & ={\widehat{(\psi_0)}}_l, \quad l \in {T}_N.
	\end{aligned}
\end{equation}
Similarly, we define a fully discrete numerical flow $\Phi_h^\tau:Y_N \rightarrow Y_N$ associated with the EWI-FS method as
\begin{equation}
	\Phi_h^\tau(\phi) = {e}^{i \tau \langle\nabla\rangle_{\alpha}} \phi-i \tau \varphi_1(i \tau \langle\nabla\rangle_{\alpha}) P_N(V \phi+ G(\phi)), \quad \phi \in Y_N.
\end{equation}
Then the numerical solution $\psi^n (n \geq 0)\in Y_N$ obtained from \eqref{e2.12} satisfies
\begin{equation}\label{e2.11}
	\begin{aligned}
		\psi^{n+1} & ={e}^{i \tau \langle\nabla\rangle_{\alpha}} \psi^n-i \tau \varphi_1(i \tau \langle\nabla\rangle_{\alpha}) P_N(V \psi^n+ G(\psi^n)), \quad n \geq 0, \\
		\psi^0 & =P_N \psi_0.
	\end{aligned}
\end{equation}

\begin{remark}[Implementation]
To implement the EWI-FS method \eqref{e2.12} in practice, one needs to accurately compute the Fourier coefficients of $V\psi^n$ and $G(\psi^n)$. This can be done efficiently by using an extended FFT as in \citep{Bao2024a}. For the convenience of the reader, we illustrate this process below.

For any integer $K \geq N$, define $K$ equally distributed quadrature points as
\begin{equation*}
x_j^K: =a+j \frac{b-a}{K}, \quad j=0,1,2, \cdots, K.
\end{equation*}
For $\phi \in L^{\infty}(\Omega)$ bounded on $\overline{\Omega}$, the $l$-th Fourier coefficient $\widehat{\phi}_l$ can be approximated by
\begin{equation*}
\widehat{\phi}_l \approx \widetilde{\phi}_{K, l}:=\frac{1}{K} \sum_{j=0}^{K-1} \phi(x_j^K) e^{-i \mu_l(x_j^K-a)}, \quad l \in {T}_N .
\end{equation*}
Note that $\widetilde{\phi}_{K, l}=\widetilde{\phi}_l$ for $l \in T_N$, where
\begin{equation*}
\phi=(\phi_0, \cdots, \phi_K)^T \in X_K, \quad \phi_j=\phi(x_j^K), \quad j=0,1,2, \cdots, K.
\end{equation*}
Then, we use $\widehat{G(\psi^n)}_l \approx \widetilde{G(\psi^n)}_{K,l}$ for $l \in {T}_N$ with $K \geq N$ in \eqref{e2.12}. This method is called the Fourier spectral method with quadrature (FSwQ) \citep{Bao2024a}. According to our extensive numerical results, it suffices to choose $K=3N$ to obtain the desired optimal convergence rate.

For the computation of $\widehat{(V \psi^n)}_l$, we use an extended Fourier pseudospectral (eFP) method \citep{Bao2024a,wangthesis}. When $\psi^n \in Y_N$, we have, see \citep{Bao2024a},
\begin{equation}\label{e2.14}
P_N(V \psi^{ n}) = P_N((P_{2 N}V) \psi^{ n }) =P_N(I_{3 N}((P_{2 N}V) \psi^{n})),
\end{equation}
which implies
\begin{equation}\label{e2.14-1}
\widehat{(V\psi^{ n})}_l=\widetilde{U}_{l}, \quad l\in T_{N},
\end{equation}
where $U\in X_{3N}$ is defined as
\begin{equation}\label{e2.14-2}
\begin{aligned}
U_{j}=(P_{2N}V)(x_{j}^{3N})\times\psi(x_{j}^{3N}),\quad j=0,1,\cdots,3N.
\end{aligned}
\end{equation}
Here, $P_{2N}V$ can be precomputed numerically or analytically.

\end{remark}

\section{Some useful lemmas}
Here, we present some useful lemmas that will be used in the subsequent proof of the optimal error estimates.

We start with an estimate for $f$ defined in \eqref{eq:f}.
\begin{lemma}\label{assum1}
Let $z_1, z_2 \in \mathbb{C}$ such that $|z_1| \leq M_0$ and $|z_2| \leq M_0$, we have
\begin{equation*}
|f(|z_1|^2) z_1-f(|z_2|^2) z_2| \leq C(M_0)|z_1-z_2|.
\end{equation*}
\end{lemma}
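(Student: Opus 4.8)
The statement is a local Lipschitz bound for the map $z \mapsto f(|z|^2)z = |z|^{2\sigma}z$ on the ball $\{|z| \le M_0\}$ in $\mathbb{C}$, viewed as a map $\mathbb{R}^2 \to \mathbb{R}^2$. The plan is to reduce everything to a one-variable Lipschitz estimate for the scalar function $g(r) = r^\sigma$ on $[0, M_0^2]$, or equivalently for $r \mapsto r^{2\sigma}$ on $[0, M_0]$, and then handle the two cases $\sigma \ge 1/2$ and $0 < \sigma < 1/2$ slightly differently because in the latter case $r \mapsto r^{2\sigma}$ is only Hölder, not Lipschitz, near the origin — yet the extra factor of $z$ restores Lipschitz continuity of the product.

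First I would write the difference as a telescoping sum,
\begin{equation*}
|z_1|^{2\sigma} z_1 - |z_2|^{2\sigma} z_2 = |z_1|^{2\sigma}(z_1 - z_2) + \bigl(|z_1|^{2\sigma} - |z_2|^{2\sigma}\bigr) z_2,
\end{equation*}
so that
\begin{equation*}
\bigl| |z_1|^{2\sigma} z_1 - |z_2|^{2\sigma} z_2 \bigr| \le M_0^{2\sigma}\,|z_1 - z_2| + M_0 \,\bigl| |z_1|^{2\sigma} - |z_2|^{2\sigma} \bigr|.
\end{equation*}
The first term is already of the desired form with constant $M_0^{2\sigma}$. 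For the second term it remains to bound $\bigl| |z_1|^{2\sigma} - |z_2|^{2\sigma}\bigr|$ by a constant times $|z_1 - z_2|$; since $\bigl||z_1| - |z_2|\bigr| \le |z_1 - z_2|$ by the reverse triangle inequality, it suffices to show that $r \mapsto r^{2\sigma}$ is Lipschitz on $[0, M_0]$ — but this is exactly where the case distinction enters.

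If $2\sigma \ge 1$, the function $r \mapsto r^{2\sigma}$ is $C^1$ on $[0, M_0]$ with derivative $2\sigma r^{2\sigma - 1}$ bounded by $2\sigma M_0^{2\sigma - 1}$, so the mean value theorem gives the Lipschitz bound immediately, yielding $C(M_0) = M_0^{2\sigma} + 2\sigma M_0^{2\sigma}$. If $0 < 2\sigma < 1$, then $r \mapsto r^{2\sigma}$ is only $2\sigma$-Hölder on $[0,M_0]$, so this route fails and one must instead use the $z_2$ factor: redo the splitting keeping the product structure, i.e. estimate $\bigl||z_1|^{2\sigma}z_1 - |z_2|^{2\sigma}z_2\bigr|$ directly. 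Here I would invoke the elementary inequality that for the map $w \mapsto |w|^{2\sigma} w$ on $\mathbb{C}$ one has $\bigl||w_1|^{2\sigma}w_1 - |w_2|^{2\sigma}w_2\bigr| \le (1 + 2\sigma)\max(|w_1|,|w_2|)^{2\sigma}\,|w_1 - w_2|$, which can be proved by integrating the (radially bounded) derivative of $w \mapsto |w|^{2\sigma}w$ along the segment from $w_2$ to $w_1$: away from the origin the Jacobian of $w \mapsto |w|^{2\sigma} w$ has operator norm $\le (1+2\sigma)|w|^{2\sigma}$, and since $|w|^{2\sigma}$ is integrable along any segment even through the origin (as $2\sigma > 0$), the fundamental theorem of calculus applies. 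This gives $C(M_0) = (1+2\sigma)M_0^{2\sigma}$, covering all $\sigma > 0$ uniformly.

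The only genuine obstacle is the near-origin behaviour when $0 < \sigma < 1/2$, where $f(|\psi|^2)\psi$ is merely $C^1$ in the real sense and the naive factor-by-factor estimate breaks down; the fix is to keep $z \mapsto |z|^{2\sigma}z$ together and exploit that its gradient, though unbounded-looking, scales like $|z|^{2\sigma}$, which is locally integrable. Everything else is bookkeeping: collecting the constants and noting that $C(M_0)$ can be taken as, say, $(1+2\sigma)\max(1, M_0)^{2\sigma}$, which is finite and depends only on $M_0$ and $\sigma$, as required.
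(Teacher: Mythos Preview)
Your proof is correct, and the argument you settle on for general $\sigma>0$ --- integrating the derivative of $w\mapsto|w|^{2\sigma}w$ along the segment from $z_2$ to $z_1$ and bounding the Jacobian by $(1+2\sigma)|w|^{2\sigma}$ --- is exactly the paper's approach: they set $z^\theta=(1-\theta)z_1+\theta z_2$, write $\gamma(\theta)=f(|z^\theta|^2)z^\theta$, compute $\gamma'(\theta)=(1+\sigma)|z^\theta|^{2\sigma}(z_2-z_1)+\sigma|z^\theta|^{2\sigma-2}(z^\theta)^2\,\overline{(z_2-z_1)}$, and bound it by $(1+2\sigma)M_0^{2\sigma}|z_1-z_2|$. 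Your initial telescoping split for $\sigma\ge 1/2$ is a harmless detour that you rightly abandon, since the segment argument already covers all $\sigma>0$ uniformly.
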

\begin{proof}
For any $z_1, z_2 \in \mathbb{C}$, let $z^\theta=(1-\theta) z_1+\theta z_2$ and let $\gamma(\theta)=f(|z^\theta|^2) z^\theta$ for $0 \leq \theta \leq 1$, we have
\begin{equation}\label{eq3.6}
	f(|z_2|^2) z_2-f(|z_1|^2) z_1=\gamma(1)-\gamma(0)=\int_0^1 \gamma^{\prime}(\theta) \mathrm{d} \theta,
\end{equation}
where
\begin{equation}\label{eq:gamma}
	\gamma^{\prime}(\theta)=(1+\sigma) f(|z^\theta|^2)(z_2-z_1)+g(z^\theta) \overline{(z_2-z_1)},
\end{equation}
with
\begin{equation}\label{eq3.7-n}
	g(z)=\left\{\begin{aligned}
		&f^{\prime}\left(|z|^2\right) z^2=\beta \sigma|z|^{2 \sigma-2} z^2, && z \neq 0, \\
		&0, && z=0,
	\end{aligned} \quad z \in \mathbb{C}.\right.
\end{equation}
Then we have, since $\sigma > 0$,
\begin{equation}
	|f(|z_2|^2) z_2-f(|z_1|^2) z_1| \leq \int_0^1 |\gamma^{\prime}(\theta)| \mathrm{d} \theta \lesssim |z_\theta|^{2\sigma}|z_1 - z_2| \leq M_0^{2\sigma} |z_1 - z_2|,
\end{equation}
which completes the proof.
\end{proof}

We say $F:\mathbb{C} \rightarrow \mathbb{C}$ is (globally) $p$-H\"older continuous with $0<p\leq1$ if
\begin{equation*}
	| F(z_1) - F(z_2) | \leq C |z_1 - z_2|^{p}, \quad z_1, z_2 \in \mathbb{C},
\end{equation*}
where $C$ is a constant independent of $z_1$ and $z_2$.

\begin{lemma}\label{lem:alphanorm}
	Let $\frac{\alpha}{4} < \sigma \leq \frac{1}{2}$ and $F$ be $2\sigma$-H\"older continuous. For any $v \in H_\text{\rm per}^1(\Omega)$, we have $F(v) \in H_\text{\rm per}^\frac{\alpha}{2}(\Omega)$ and
	\begin{equation}
		\| F(v) \|_{H^\frac{\alpha}{2}} \lesssim \| v \|_{H^1}^{2\sigma}.
	\end{equation}
\end{lemma}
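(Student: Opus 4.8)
The plan is to estimate the Gagliardo (fractional Sobolev) seminorm of $F(v)$ directly, exploiting the $2\sigma$-Hölder continuity of $F$ to convert differences of $F(v)$ into differences of $v$, and then using the embedding $H^1(\Omega)\hookrightarrow W^{1,q}$ or $H^1\hookrightarrow C^{0,\gamma}$ available in 1D. Concretely, since $\tfrac{\alpha}{2}<1$, the norm $\|F(v)\|_{H^{\alpha/2}}$ is equivalent to $\|F(v)\|_{L^2}+[F(v)]_{H^{\alpha/2}}$ where the seminorm has the Aronszajn--Slobodeckij form
\begin{equation}\label{eq:gagliardo}
[F(v)]_{H^{\alpha/2}}^2 = \int_\Omega\int_\Omega \frac{|F(v(x))-F(v(y))|^2}{|x-y|^{1+\alpha}}\,\mathrm{d}x\,\mathrm{d}y,
\end{equation}
(with the periodic identification on $\Omega$). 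First I would dispose of the $L^2$ part: Hölder continuity gives $|F(v(x))|\le |F(0)|+C|v(x)|^{2\sigma}\lesssim 1+|v(x)|^{2\sigma}$, and since $H^1(\Omega)\hookrightarrow L^\infty(\Omega)$ in one dimension, $\|F(v)\|_{L^2}\lesssim 1+\|v\|_{L^\infty}^{2\sigma}\lesssim \|v\|_{H^1}^{2\sigma}$ (absorbing the constant into $\|v\|_{H^1}^{2\sigma}$ is legitimate on a fixed bounded domain once one notes the statement is really about the homogeneous-type bound; alternatively one keeps a $1+\|v\|_{H^1}^{2\sigma}$ and remarks the two are comparable for the solutions of interest).

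For the seminorm \eqref{eq:gagliardo}, the key step is the pointwise bound $|F(v(x))-F(v(y))|\le C|v(x)-v(y)|^{2\sigma}$ coming from $2\sigma$-Hölder continuity of $F$. Substituting into \eqref{eq:gagliardo},
\begin{equation}\label{eq:afterholder}
[F(v)]_{H^{\alpha/2}}^2 \lesssim \int_\Omega\int_\Omega \frac{|v(x)-v(y)|^{4\sigma}}{|x-y|^{1+\alpha}}\,\mathrm{d}x\,\mathrm{d}y.
\end{equation}
Now I would use $v\in H^1\hookrightarrow C^{0,1/2}(\overline\Omega)$ in 1D, so $|v(x)-v(y)|\le C\|v\|_{H^1}|x-y|^{1/2}$. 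The right-hand side of \eqref{eq:afterholder} is then bounded by $C\|v\|_{H^1}^{4\sigma}\int_\Omega\int_\Omega |x-y|^{2\sigma - 1 - \alpha}\,\mathrm{d}x\,\mathrm{d}y$, and this integral is finite precisely when $2\sigma - 1 - \alpha > -2$, i.e. $\sigma > \tfrac{\alpha-1}{2}$; since $\alpha\le 2$ we have $\tfrac{\alpha-1}{2}\le\tfrac12$, and this is weaker than the hypothesis $\sigma>\tfrac{\alpha}{4}$ when $\alpha<2$, so the hypothesis is more than sufficient. (A slightly sharper route splits the inner $y$-integral into $|x-y|<1$, where the Hölder bound on $v$ is used, and $|x-y|\ge1$, where one just uses $|v(x)-v(y)|^{4\sigma}\lesssim \|v\|_{L^\infty}^{4\sigma}$ and integrability of $|x-y|^{-1-\alpha}$ away from the diagonal — this is where the constant $2\sigma$-vs-$\alpha$ balance $\sigma>\alpha/4$ actually becomes the operative threshold, via an interpolation that trades a $4\sigma$ power of an $H^{1/2}$-type difference, not an $H^1$ one.) I would present the cleaner version: bound $|v(x)-v(y)|^{4\sigma} = |v(x)-v(y)|^{4\sigma}$ and split the exponent as in a standard fractional-chain-rule argument.

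The main obstacle I anticipate is getting the exponent bookkeeping exactly right so that the hypothesis $\sigma>\tfrac{\alpha}{4}$ is seen to be the natural (and in fact sharp) condition, rather than the cruder $\sigma>\tfrac{\alpha-1}{2}$ the brute-force estimate above yields. The right way is the following two-scale argument: write, for the near-diagonal part, $|v(x)-v(y)|^{4\sigma}\le |v(x)-v(y)|^{2}\cdot|v(x)-v(y)|^{4\sigma-2}$ when $\sigma\ge\tfrac12$ — but here $\sigma\le\tfrac12$, so instead interpolate the Gagliardo integral itself: one shows $F(v)\in H^{s}$ for $s = 2\sigma\cdot 1 = 2\sigma$? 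No — the clean statement is that $2\sigma$-Hölder composition sends $H^1$ into $H^{2\sigma}$-type regularity only in a Besov sense, and then $H^{2\sigma}_{\text{loc}}$-type control plus the $L^\infty$ bound gives $H^{\alpha/2}$ as long as $\alpha/2 \le 2\sigma$, i.e. $\sigma\ge\alpha/4$; the strict inequality $\sigma>\alpha/4$ buys the needed room to close the singular integral at the endpoint and to absorb the periodic boundary. So the plan concretely is: (1) reduce to the Gagliardo seminorm; (2) apply Hölder continuity pointwise to pass to $|v(x)-v(y)|^{4\sigma}$; (3) split the double integral into $|x-y|\le 1$ and $|x-y|>1$; (4) on the near-diagonal piece use $|v(x)-v(y)|\lesssim\|v\|_{H^1}|x-y|^{1/2}$ from the 1D Sobolev embedding and check $\int_0^1 r^{2\sigma-\alpha}\,\tfrac{\mathrm{d}r}{r}<\infty \iff \sigma>\alpha/2-\tfrac12$, which holds since $\sigma>\alpha/4\ge\alpha/2-\tfrac12$ for $\alpha\ge 1$ — wait, $\alpha/4 \ge \alpha/2 - 1/2 \iff 1/2 \ge \alpha/4 \iff \alpha\le 2$, true; (5) on the far piece use the uniform bound $|v(x)-v(y)|^{4\sigma}\lesssim\|v\|_{H^1}^{4\sigma}$ and $\int_1^\infty r^{-1-\alpha}\mathrm{d}r<\infty$; (6) combine with the $L^2$ bound. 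I expect step (4)'s exponent check to be the only real subtlety, and it is precisely the place where the constraint relating $\sigma$ and $\alpha$ is forced.
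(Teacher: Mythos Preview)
Your approach has a genuine gap, and it is precisely in the exponent bookkeeping you flagged as the subtle step. After applying the $2\sigma$-H\"older bound and the Morrey embedding $H^1(\Omega)\hookrightarrow C^{0,1/2}$, the near-diagonal integral you need to control is
\[
\int_\Omega\int_\Omega |x-y|^{2\sigma-1-\alpha}\,\mathrm{d}x\,\mathrm{d}y
\;\sim\; |\Omega|\int_0^{|\Omega|} r^{2\sigma-1-\alpha}\,\mathrm{d}r,
\]
which converges iff $2\sigma-1-\alpha>-1$, i.e.\ $\sigma>\alpha/2$ (the singular set is the one-dimensional diagonal, so the threshold is $-1$, not $-2$; your later rewriting $\int_0^1 r^{2\sigma-\alpha}\,\tfrac{\mathrm{d}r}{r}$ likewise converges iff $2\sigma-\alpha>0$). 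Since the lemma assumes $\sigma\le \tfrac12$ and $\alpha>1$, the condition $\sigma>\alpha/2$ is \emph{never} satisfied, so the Morrey route cannot close. The pointwise bound $|v(x)-v(y)|\lesssim\|v\|_{H^1}|x-y|^{1/2}$ simply gives up too much: it converts $|v(x)-v(y)|^{4\sigma}$ into $|x-y|^{2\sigma}$, whereas what you need is the full factor $|x-y|^{4\sigma}$.

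The paper obtains this full factor by avoiding any pointwise embedding and instead writing $v(x+y)-v(y)=x\int_0^1 v'(y+\theta x)\,\mathrm{d}\theta$ via the fundamental theorem of calculus. Raising to the power $4\sigma$ (Jensen applies since $4\sigma>1$ under $\sigma>\alpha/4>\tfrac14$), integrating in $y$ first, and using periodicity gives
\[
\int_\Omega |v(x+y)-v(y)|^{4\sigma}\,\mathrm{d}y \;\le\; |x|^{4\sigma}\,\|v'\|_{L^{4\sigma}}^{4\sigma},
\]
so that the remaining $x$-integral is $\int_\Omega |x|^{4\sigma-1-\alpha}\,\mathrm{d}x$, finite exactly when $\sigma>\alpha/4$; the constraint $\sigma\le\tfrac12$ then ensures $\|v'\|_{L^{4\sigma}}\lesssim\|v'\|_{L^2}$. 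The missing idea in your plan is this integrated (rather than pointwise) use of $v'$, which is what makes $\sigma>\alpha/4$ the operative and sharp threshold.
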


\begin{proof}
	From Lemma 2.1 in \citep{Zhang2024}, recalling that $\Omega$ is equipped with the periodic boundary condition, we have
	\begin{equation}\label{eq3.9}
		\begin{aligned}
			|F(v)|_{H^{\frac{\alpha}{2}}}^2&\lesssim\int_{\Omega}\int_{\Omega}\frac{|F(v(x+y))-F(v(y))|^2}{|x|^{1+\alpha}}\mathrm{d} x \mathrm{d} y\\
			&\lesssim\int_{\Omega}\int_{\Omega}\frac{|v(x+y)-v(y)|^{4\sigma}}{|x|^{1+\alpha}}\mathrm{d} x \mathrm{d} y \quad
			\text{(\emph{F} is $2\sigma-$H\"{o}lder continuous)}\\
			&=\int_{\Omega}\int_{\Omega}\frac{|\int_{0}^{1}v'(y+\theta x)x\mathrm{d} \theta|^{4\sigma}}{|x|^{1+\alpha}}\mathrm{d} x \mathrm{d} y \\
			&\leq \int_{\Omega}\int_{\Omega}\int_{0}^{1}|v'(y+\theta x)|^{4\sigma}\mathrm{d} \theta\frac{|x|^{4\sigma}}{|x|^{1+\alpha}}\mathrm{d} x \mathrm{d} y\\
			&\lesssim\int_{\Omega}\frac{|x|^{4\sigma}}{|x|^{1+\alpha}}\mathrm{d} x\int_{\Omega} \int_{0}^{1} |v'(y)|^{4\sigma}\mathrm{d} \theta \mathrm{d} y \quad \text{(change integration order)}\\
			&=\int_{\Omega}\frac{|x|^{4\sigma}}{|x|^{1+\alpha}}\mathrm{d} x \ \|v'\|^{4\sigma}_{L^{4\sigma}}\\
			&\lesssim \int_{\Omega}\frac{|x|^{4\sigma}}{|x|^{1+\alpha}}\mathrm{d} x \ \|v\|^{4\sigma}_{H^1} \quad \text{($\sigma\leq \frac{1}{2}$)}\\
			&\lesssim \|v\|^{4\sigma}_{H^1} \quad \text{($\sigma>\frac{\alpha}{4}$, the integral $\int_{\Omega}\frac{|x|^{4\sigma}}{|x|^{1+\alpha}}\mathrm{d} x$ converges)}.
		\end{aligned}
	\end{equation}
	Thus, we have completed the proof.
\end{proof}

\begin{lemma}\label{assum2}
When $\sigma > \frac{\alpha}{4}$, for $v, w \in H_\text{\rm per}^{1}(\Omega)$,
\begin{equation*}
\begin{aligned}
\|f(|v|^2) v-f(|w|^2) w\|_{H^\frac{\alpha}{2}}
\leq C(\|v\|^{2\sigma}_{H^1},\|w\|^{2\sigma}_{H^1})\| v-w \|_{H^\frac{\alpha}{2}}.
\end{aligned}
\end{equation*}
\end{lemma}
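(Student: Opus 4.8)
The plan is to prove the local Lipschitz estimate in the energy norm $H^{\frac{\alpha}{2}}$ by splitting the difference $f(|v|^2)v - f(|w|^2)w$ into a low-order part (controlled in $L^2$ and hence in $H^0$) and a seminorm part $|\cdot|_{H^{\frac{\alpha}{2}}}$, then bounding each separately. For the $L^2$ part I would simply invoke Lemma \ref{assum1}: since $v, w \in H^1_{\mathrm{per}}(\Omega)$ embeds into $L^\infty(\Omega)$ in 1D, we have $\|v\|_{L^\infty}, \|w\|_{L^\infty} \lesssim \|v\|_{H^1}, \|w\|_{H^1}$, so $\|f(|v|^2)v - f(|w|^2)w\|_{L^2} \leq C(\|v\|_{H^1}, \|w\|_{H^1}) \|v - w\|_{L^2} \leq C \|v-w\|_{H^{\frac{\alpha}{2}}}$.

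The main work is the seminorm estimate, and I would model it on the computation in Lemma \ref{lem:alphanorm} (which in turn rests on Lemma 2.1 of \citep{Zhang2024}). Writing $G(z) = f(|z|^2)z = |z|^{2\sigma}z$, the key algebraic fact is that $G$ is "$2\sigma$-H\"older in an increment sense": one can show
\begin{equation*}
|G(z_1) - G(z_2) - G(z_3) + G(z_4)| \lesssim (\text{bounded factor}) \bigl( |z_1 - z_3 - z_2 + z_4| + (\text{osc. of } z)\,|z_1 - z_2|^{2\sigma} \bigr),
\end{equation*}
or more practically, I would use the pointwise bound $|G(a) - G(b)| \leq C(|a|,|b|)|a-b|$ from Lemma \ref{assum1} together with a "difference of differences" decomposition. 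Concretely, set $\delta = v - w$; then for the double integral
\begin{equation*}
|G(v) - G(w)|_{H^{\frac{\alpha}{2}}}^2 \lesssim \int_\Omega \int_\Omega \frac{|(G(v) - G(w))(x+y) - (G(v) - G(w))(y)|^2}{|x|^{1+\alpha}}\,\mathrm{d}x\,\mathrm{d}y,
\end{equation*}
I would add and subtract cross terms to write the numerator as a combination of (i) terms like $[G(v(x+y)) - G(v(y))] - [G(w(x+y)) - G(w(y))]$ handled by a mean-value/Hölder argument producing $|\delta(x+y) - \delta(y)|$ times a bounded factor (giving $|\delta|_{H^{\frac{\alpha}{2}}}^2 \lesssim \|v-w\|_{H^{\frac{\alpha}{2}}}^2$ after integrating), and (ii) remainder terms where the increment falls on $v$ or $w$ and is therefore controlled by $|v(x+y)-v(y)|^{2\sigma}$ or $|w(x+y)-w(y)|^{2\sigma}$ multiplied by $|\delta(y)|$ (or $|\delta|_{L^\infty}$), which after the substitution $v(x+y) - v(y) = \int_0^1 v'(y+\theta x)x\,\mathrm{d}\theta$ and changing the order of integration yields exactly the convergent integral $\int_\Omega |x|^{4\sigma}/|x|^{1+\alpha}\,\mathrm{d}x$ (finite since $\sigma > \frac{\alpha}{4}$) times $\|v'\|_{L^{4\sigma}}^{4\sigma}\|\delta\|_{L^\infty}^2 \lesssim \|v\|_{H^1}^{4\sigma}\|\delta\|_{H^1}^2$ — but here I would need $\delta \in H^1$, not merely $H^{\frac{\alpha}{2}}$, so I would instead keep $\|\delta\|_{L^\infty}$ factored out only where the companion factor already supplies the full $\int |x|^{4\sigma-1-\alpha}$ weight, and use $\|\delta\|_{L^\infty} \lesssim \|\delta\|_{H^{\frac{\alpha}{2}}}$ via the 1D embedding $H^{\frac{\alpha}{2}} \hookrightarrow L^\infty$ valid for $\alpha > 1$.

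The hard part will be organizing the "difference of differences" bookkeeping so that every one of the four resulting pieces ends up paired with either (a) an increment of $\delta$ against a uniformly bounded coefficient — giving back $|\delta|_{H^{\frac{\alpha}{2}}}$ — or (b) an increment of $v$ or $w$ raised to a power that supplies a high enough power of $|x|$ to make the kernel integrable, with the leftover $\delta$-factor measured in $L^\infty$ (safe because $\alpha/2 > 1/2$ gives $H^{\frac{\alpha}{2}} \hookrightarrow L^\infty$ in 1D). The case $0 < \sigma < \frac{1}{2}$ versus $\sigma \geq \frac{1}{2}$ should be treated uniformly by noting $|G(a)-G(b)| \lesssim (|a|+|b|)^{2\sigma}|a-b|$ when $\sigma \leq \frac12$ and $\lesssim (|a|+|b|)^{2\sigma-1}|a-b|^{?}\cdots$ otherwise — but since the lemma only claims the bound for $\sigma > \frac{\alpha}{4}$ and the surrounding analysis elsewhere imposes $\sigma \leq \frac12$ in the critical regime, I would state and prove it in the form needed, tracking constants as $C(\|v\|_{H^1}^{2\sigma}, \|w\|_{H^1}^{2\sigma})$. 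Combining the $L^2$ bound and the seminorm bound through $\|\cdot\|_{H^{\frac{\alpha}{2}}}^2 \eqsim \|\cdot\|_{L^2}^2 + |\cdot|_{H^{\frac{\alpha}{2}}}^2$ then completes the proof.
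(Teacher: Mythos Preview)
Your direct attack on the Gagliardo seminorm is a plausible alternative, but the paper's route is considerably cleaner, and you have skipped over the step that actually carries the weight in either approach.

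The paper linearizes via $G(v)-G(w)=\int_0^1\gamma'(\theta)\,d\theta$ with $z^\theta=(1-\theta)v+\theta w$ (this is \eqref{eq3.6}--\eqref{eq:gamma}), and then, instead of descending to the double integral, simply invokes the \emph{algebra property} of $H^{\alpha/2}$ in 1D (valid because $\alpha/2>1/2$) to get
\[
\|\gamma'(\theta)\|_{H^{\alpha/2}}\lesssim\bigl(\|f_1(z^\theta)\|_{H^{\alpha/2}}+\|f_2(z^\theta)\|_{H^{\alpha/2}}\bigr)\|v-w\|_{H^{\alpha/2}},
\]
with $f_1(z)=|z|^{2\sigma}$ and $f_2(z)=|z|^{2\sigma-2}z^2$. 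The desired factor $\|v-w\|_{H^{\alpha/2}}$ is now isolated, and it remains only to bound $\|f_j(z^\theta)\|_{H^{\alpha/2}}$ via Lemma~\ref{lem:alphanorm}, which requires $f_1,f_2$ to be $2\sigma$-H\"older continuous. That $f_1$ is $2\sigma$-H\"older is obvious; that $f_2$ is $2\sigma$-H\"older is \emph{not}, and the bulk of the paper's proof is a polar-coordinate computation establishing $|1-r^{2\sigma}e^{2i\theta}|\le 16\,|1-re^{i\theta}|^{2\sigma}$.

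Your second-difference scheme, once you carry out the mean-value step you mention, produces exactly the same coefficient functions $f_1(z^\theta)$ and $f_2(z^\theta)$, and your type-(ii) ``remainder'' terms are of the form $\bigl[f_j(z^\theta(x+y))-f_j(z^\theta(y))\bigr]\,\delta(y)$. Asserting that these are controlled by $|v(x+y)-v(y)|^{2\sigma}\,|\delta(y)|$ is precisely the claim that $f_j$ is $2\sigma$-H\"older; for $j=2$ this is the missing ingredient in your sketch. Without it the type-(ii) bound fails, and trying to sidestep the mean-value step via the na\"ive product expansion $|a|^{2\sigma}a-|b|^{2\sigma}b=|a|^{2\sigma}(a-b)+(|a|^{2\sigma}-|b|^{2\sigma})b$ generates cross terms like $|\delta|^{2\sigma}\,|\Delta_x w|$ that are not linear in $\delta$ and so do not deliver the stated Lipschitz estimate.

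In short: either route requires the $f_2$ H\"older lemma, and once you have it, the paper's algebra-property argument dispenses with the double-integral bookkeeping entirely.
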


\begin{proof}

Similar to \eqref{eq3.6} with $z^\theta = (1-\theta)v + \theta w$ for $0 \leq \theta \leq 1$, we have
\begin{equation}\label{eq3.8}
\|f(|v|^2) v-f(|w|^2) w \|_{H^\frac{\alpha}{2}} \leq \int_0^1 \| \gamma^{\prime}(\theta)\|_{H^\frac{\alpha}{2}} \rm d \theta.
\end{equation}
Since $\frac{\alpha}{2}>\frac{1}{2}$, by the algebra property of $H^\frac{\alpha}{2}$ in 1D, and recalling \eqref{eq:gamma}, we have
\begin{equation}\label{eq:gamma-derivative}
	\| \gamma^{\prime}(\theta)\|_{H^\frac{\alpha}{2}} \lesssim \left(\| f_1(z^\theta) \|_{H^\frac{\alpha}{2}} + \| f_2(z^\theta) \|_{H^\frac{\alpha}{2}}\right) \| v - w \|_{H^\frac{\alpha}{2}},
\end{equation}
where $f_{1}(z)=|z|^{2\sigma}$ and $f_2(z)=|z|^{2\sigma-2}z^2$ for $z \in \mathbb{C}$ and we define $(f_j(z^\theta))(x) = f_j(z^\theta(x))$ for $j=1,2$ and $x \in \Omega$. To apply Lemma \ref{lem:alphanorm}, we need to show that $f_1$ and $f_2$ are $2\sigma$-H\"older continuous, which is obvious for $f_{1}(z)$ with $0<\sigma \leq 1/2$. In the following, we show that $f_{2}(z)$ is also $2\sigma-$H\"{o}lder continuous, i.e.,
\begin{equation}\label{eq:f2_Holder}
	\left||z_{1}|^{2\sigma-2}z_{1}^2-|z_{2}|^{2\sigma-2}z_{2}^2\right|\leq C|z_{1}-z_{2}|^{2\sigma}, \quad 0 < \sigma \leq \frac{1}{2}.
\end{equation}
Without loss of generality, we assume that $0<|z_2|\leq|z_1|$. Since \eqref{eq:f2_Holder} can be rewritten as
\begin{equation}\label{eq3.38-2}
\left||z_{1}|^{2\sigma-2}z_{1}^2\left(1-\frac{|z_{2}|^{2\sigma-2}z_{2}^2}{|z_{1}|^{2\sigma-2}z_{1}^2}\right)\right|
\leq C|z_{1}-z_{2}|^{2\sigma},
\end{equation}
it suffices to prove that
\begin{equation}\label{eq:f2_Holder_reduce}
	\left|1-\left|z\right|^{2\sigma-2}z^2\right|\leq C\left|1-z\right|^{2\sigma},
\end{equation}
where $z=z_2/z_1 \in \mathbb{C}$ satisfying $0<|z|\leq 1$. Let $z=re^{i\theta}$ with $ 0<r \leq 1 $ and $ -\pi \leq \theta < \pi$. Then \eqref{eq:f2_Holder_reduce} becomes
\begin{equation}\label{eq3.38-5}
|1-r^{2\sigma}{e}^{2i\theta}|\leq C|1-r{e}^{i\theta}|^{2\sigma}.
\end{equation}
We only need to consider cases where $z$ is close to $1$, i.e., $(r, \theta) $ is close to $ (1, 0)$. For the LHS, using $1-r^{2\sigma} \leq (1-r)^{2\sigma}$ for $0<r\leq1$ and $0<\sigma \leq 1/2$, we have
\begin{equation}\label{eq:LHS}
	|1-r^{2\sigma}e^{2i\theta}| \leq |1-r^{2\sigma}| + r^{2\sigma} |1-e^{2i\theta}| \leq (1-r)^{2\sigma} +2 |\theta| \leq (1-r)^{2\sigma} +2 |\theta|^{2\sigma}.
\end{equation}
For the RHS, since $(x+y)^\sigma \geq (x^\sigma+y^\sigma)/2$ for $x\geq0, y\geq0$, we have
\begin{align}
	|1-r e^{i\theta}|^{2\sigma}
	&= (1+r^2 - 2r\cos \theta)^{\sigma} = [(1-r)^2 + 2r(1-\cos\theta)]^\sigma \notag \\
	&\geq \frac{1}{2}(1-r)^{2\sigma} + \frac{1}{2} 2^\sigma r^\sigma \frac{\theta^{2\sigma}}{4^\sigma} \geq \frac{1}{16}((1-r)^{2\sigma} + 2|\theta|^{2\sigma}),
\end{align}
which, combined with \eqref{eq:LHS}, proves \eqref{eq3.38-5} and thus \eqref{eq:f2_Holder} with $C=16$. The rest of the proof follows immediately from Lemma \ref{lem:alphanorm}.
\end{proof}

Define an operator $B$ as
\begin{equation}\label{e3.4}
B(v)=V v+f\left(|v|^2\right) v.
\end{equation}
For the operator $B$, from Lemma \ref{assum1}, we immediately have the following.
\begin{lemma}\label{lem3.4}
Let $v, w \in L^{\infty}(\Omega)$ satisfying $\|v\|_{L^{\infty}} \leq M_0$ and $\|w\|_{L^{\infty}} \leq M_0$, then
\begin{equation*}\label{e3.5}
\|B(v)-B(w)\|_{L^2} \leq C(M_0, \|V\|_{L^{\infty}})\|v-w\|_{L^2} .
\end{equation*}
\end{lemma}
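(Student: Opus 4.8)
The statement to prove, Lemma \ref{lem3.4}, asserts that the full nonlinear operator $B(v) = Vv + f(|v|^2)v$ is locally Lipschitz in $L^2$ when its arguments are bounded in $L^\infty$. The plan is to split $B$ into its linear (potential) part and its genuinely nonlinear part and estimate each separately, exploiting that $V \in L^\infty(\Omega)$ and that Lemma \ref{assum1} already handles the nonlinearity pointwise.

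First I would write $B(v) - B(w) = V(v-w) + \bigl(f(|v|^2)v - f(|w|^2)w\bigr)$ and take $L^2$-norms, using the triangle inequality to reduce to bounding the two terms. For the potential term, since $V$ is real-valued and $V \in L^\infty(\Omega)$, H\"older's inequality gives $\|V(v-w)\|_{L^2} \leq \|V\|_{L^\infty}\|v-w\|_{L^2}$ directly. For the nonlinear term, the key point is that Lemma \ref{assum1} provides, for each fixed $x \in \Omega$, the pointwise bound
\begin{equation*}
\bigl|f(|v(x)|^2)v(x) - f(|w(x)|^2)w(x)\bigr| \leq C(M_0)\,|v(x) - w(x)|,
\end{equation*}
valid because $|v(x)| \leq \|v\|_{L^\infty} \leq M_0$ and likewise for $w$. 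Squaring and integrating over $\Omega$ then yields $\|f(|v|^2)v - f(|w|^2)w\|_{L^2} \leq C(M_0)\|v-w\|_{L^2}$. Combining the two estimates with $C(M_0, \|V\|_{L^\infty}) = \|V\|_{L^\infty} + C(M_0)$ completes the argument.

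There is essentially no main obstacle here: the lemma is an immediate consequence of Lemma \ref{assum1} applied pointwise, together with the elementary observation that multiplication by an $L^\infty$ function is bounded on $L^2$. The only point requiring a modicum of care is the transition from the pointwise estimate of Lemma \ref{assum1} to the $L^2$ estimate — one must note that the constant $C(M_0)$ in Lemma \ref{assum1} depends only on the uniform bound $M_0$ and not on the points $z_1, z_2$, so it can be pulled outside the integral over $\Omega$. Since $v, w \in L^\infty(\Omega)$ guarantees the hypothesis $|v(x)|, |w(x)| \leq M_0$ holds for a.e.\ $x$, and since $v - w \in L^2(\Omega)$ (as $L^\infty(\Omega) \subset L^2(\Omega)$ on the bounded domain $\Omega$), the integral $\int_\Omega |v-w|^2\,\mathrm{d}x$ is finite and the estimate closes. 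I would present this in three or four lines.
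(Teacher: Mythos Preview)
Your proposal is correct and matches the paper's approach exactly: the paper does not even spell out a proof, stating only that the lemma follows immediately from Lemma~\ref{assum1}, and your argument---splitting $B(v)-B(w)$ into the potential term (bounded by H\"older using $V\in L^\infty$) and the nonlinear term (bounded pointwise via Lemma~\ref{assum1}, then integrated)---is precisely the intended justification.
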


\begin{lemma}\label{lem3.6}
Assume $\tau>0$ and $g \in C([0, \tau] ; L^2(\Omega)) \cap W^{1,1}([0, \tau] ; L^2(\Omega))$. If
\begin{equation}\label{e3.10}
w(t)=-i \int_0^t {e}^{i(t-s) \langle\nabla\rangle_{\alpha}} g(s) \mathrm{d} s, \quad t \in[0, \tau],
\end{equation}
then we have
\begin{equation}\label{e3.11}
\|\langle\nabla\rangle_{\alpha} w\|_{L^{\infty}\left([0, \tau] ; L^2\right)} \leq\|g\|_{L^{\infty}\left([0, \tau] ; L^2\right)}+\|g(0)\|_{L^2}+\left\|\partial_t g\right\|_{L^1\left([0, \tau] ; L^2\right)}.
\end{equation}
\end{lemma}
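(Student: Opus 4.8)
The plan is to work in Fourier space, where the propagator $e^{it\langle\nabla\rangle_\alpha}$ acts diagonally and $\langle\nabla\rangle_\alpha$ simply multiplies the $l$-th coefficient by $-|\mu_l|^\alpha$. Writing $\widehat{w}_l(t) = -i\int_0^t e^{-i(t-s)|\mu_l|^\alpha}\widehat{g}_l(s)\,\mathrm{d}s$, I want a pointwise-in-$l$ bound on $|\mu_l|^\alpha |\widehat{w}_l(t)|$ that, after squaring and summing over $l\in\mathbb{Z}$, yields \eqref{e3.11}. The key device is to convert the factor $|\mu_l|^\alpha$ in front of the oscillatory integral into a time derivative hitting the exponential: for $l\neq 0$ one has $|\mu_l|^\alpha e^{-i(t-s)|\mu_l|^\alpha} = -i\,\frac{\mathrm{d}}{\mathrm{d}s}\bigl(e^{-i(t-s)|\mu_l|^\alpha}\bigr)$, so integration by parts in $s$ gives
\begin{equation*}
|\mu_l|^\alpha \widehat{w}_l(t) = -\widehat{g}_l(t) + e^{-it|\mu_l|^\alpha}\widehat{g}_l(0) + \int_0^t e^{-i(t-s)|\mu_l|^\alpha}\,\partial_s\widehat{g}_l(s)\,\mathrm{d}s .
\end{equation*}
The hypothesis $g \in C([0,\tau];L^2) \cap W^{1,1}([0,\tau];L^2)$ is exactly what makes this integration by parts legitimate (absolute continuity of $s\mapsto\widehat g_l(s)$ with integrable derivative, and well-defined endpoint values); I would remark that the $l=0$ mode contributes nothing since $|\mu_0|^\alpha=0$.

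From the displayed identity, taking absolute values and using $|e^{-i\theta}|=1$,
\begin{equation*}
|\mu_l|^\alpha |\widehat{w}_l(t)| \leq |\widehat{g}_l(t)| + |\widehat{g}_l(0)| + \int_0^\tau |\partial_s\widehat{g}_l(s)|\,\mathrm{d}s .
\end{equation*}
Now I would square, sum over $l$, take square roots, and apply the triangle inequality in $\ell^2_l$ together with Minkowski's integral inequality to pull the $\ell^2_l$-norm inside the $s$-integral for the last term; this gives
\begin{equation*}
\|\langle\nabla\rangle_\alpha w(t)\|_{L^2} \leq \|g(t)\|_{L^2} + \|g(0)\|_{L^2} + \int_0^\tau \|\partial_s g(s)\|_{L^2}\,\mathrm{d}s,
\end{equation*}
and bounding $\|g(t)\|_{L^2} \leq \|g\|_{L^\infty([0,\tau];L^2)}$ and taking the supremum over $t\in[0,\tau]$ yields \eqref{e3.11}.

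There is no serious obstacle here; the only thing that needs a little care is justifying the integration by parts at the level of each Fourier mode under the stated regularity — i.e., that $W^{1,1}([0,\tau];L^2)$ embeds into $C([0,\tau];L^2)$ and that the fundamental theorem of calculus applies componentwise — and the interchange of the $\ell^2$ sum with the time integral, which is standard Minkowski. I would present the Fourier-mode identity as the main step, then the two summation/norm estimates as routine consequences. One could alternatively avoid Fourier space entirely by noting $\langle\nabla\rangle_\alpha w(t) = -g(t) + e^{it\langle\nabla\rangle_\alpha}g(0) + \int_0^t e^{i(t-s)\langle\nabla\rangle_\alpha}\partial_s g(s)\,\mathrm{d}s$ directly (integrating by parts in the operator form, using that $e^{it\langle\nabla\rangle_\alpha}$ is a strongly continuous unitary group whose generator is $i\langle\nabla\rangle_\alpha$), then applying unitarity of the group on $L^2$ to each term; I would likely use whichever of the two formulations is most consistent with the notation already fixed in the paper.
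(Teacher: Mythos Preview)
Your proposal is correct and follows essentially the same approach as the paper: both pass to Fourier coefficients, integrate by parts in $s$ using $\frac{\mathrm{d}}{\mathrm{d}s}e^{-i(t-s)|\mu_l|^\alpha}=i|\mu_l|^\alpha e^{-i(t-s)|\mu_l|^\alpha}$, and then apply Parseval together with $|e^{i\theta}|=1$ before taking the supremum in $t$. Your write-up is slightly more careful in justifying the integration by parts via the $W^{1,1}$ hypothesis and in invoking Minkowski's integral inequality for the time integral, but otherwise the two arguments coincide.
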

\begin{proof}
Apply the operator $\langle\nabla\rangle_{\alpha}$ to both sides of Eq.\eqref{e3.10}, take the Fourier transform and use integration by parts, we have
\begin{equation*}\label{e3.12n}
\begin{aligned}
&-|\mu_{l}|^{\alpha}\widehat{w}_{l}\\
&=i|\mu_{l}|^{\alpha}\int_0^t {e}^{-i(t-s)|\mu_{l}|^{\alpha}} \widehat{g}_{l}(s)\mathrm{d} s\\
&=e^{-it|\mu_{l}|^{\alpha}}\left[\widehat{g}_{l}(t)e^{it|\mu_{l}|^{\alpha}}-\widehat{g}_{l}(0)-\int_0^t{e}^{i s|\mu_{l}|^{\alpha}}\partial_{s}\widehat{g}_{l}(s)\mathrm{d} s\right].
\end{aligned}
\end{equation*}
Through the Parseval's identity and the property of $|e^{it|\mu_{l}|^{\alpha}}|=1$, we obtain
\begin{equation}\label{e3.14}
\|\langle\nabla\rangle_{\alpha} w(t)\|_{L^2} \leq\|g(0)\|_{L^2}+\left\|\partial_t g\right\|_{L^1\left([0, \tau] ; L^2\right)}+\|g(t)\|_{L^2},
\end{equation}
and the conclusion follows from taking supremum of $t$ on both sides.
\end{proof}
\begin{lemma}\label{lem3.9}
Let $v, w \in L^2(\Omega)$ and $0<\tau<1$. Then we have
$$
\left\|\varphi_1(i \tau \langle\nabla\rangle_{\alpha}) v-\varphi_1(i \tau \langle\nabla\rangle_{\alpha}) w\right\|_{H^\eta} \leq C(\eta) \tau^{-\frac{\eta}{\alpha}}\|v-w\|_{L^2}, \quad 0 \leq \eta \leq \alpha,
$$
where $C(\eta)=2^{\frac{\eta}{\alpha}}\left(1+\mu_1^{-2}\right)^{\frac{\eta}{2}}$.
\end{lemma}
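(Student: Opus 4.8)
The plan is to reduce the statement to a pointwise bound on the Fourier symbol of $\varphi_1(i\tau\langle\nabla\rangle_{\alpha})$. First, since $\varphi_1(i\tau\langle\nabla\rangle_{\alpha})$ is a linear Fourier multiplier, setting $u=v-w\in L^2(\Omega)$ reduces the claim to
$\|\varphi_1(i\tau\langle\nabla\rangle_{\alpha})u\|_{H^\eta}\le C(\eta)\tau^{-\eta/\alpha}\|u\|_{L^2}$. Using the definition \eqref{eq2.1.3} of the $H^\eta$-norm and the Fourier representation \eqref{e2.3}, I would write
$$\|\varphi_1(i\tau\langle\nabla\rangle_{\alpha})u\|_{H^\eta}^2=\sum_{l\in\mathbb{Z}}(1+\mu_l^2)^\eta\,|\varphi_1(-i\tau|\mu_l|^{\alpha})|^2\,|\widehat{u}_l|^2,$$
so by Parseval's identity it suffices to prove the uniform symbol estimate $\sup_{l\in\mathbb{Z}}(1+\mu_l^2)^{\eta/2}|\varphi_1(-i\tau|\mu_l|^{\alpha})|\le C(\eta)\tau^{-\eta/\alpha}$.

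Second, I would establish this supremum bound by separating $l=0$ from $l\neq0$. For $l=0$ the left side is $|\varphi_1(0)|=1$, which is $\le C(\eta)\tau^{-\eta/\alpha}$ because $\tau<1$, $2^{\eta/\alpha}\ge1$ and $(1+\mu_1^{-2})^{\eta/2}\ge1$. For $l\neq0$ I would use two elementary facts: (i) since $|\mu_l|\ge\mu_1$, one has $1+\mu_l^2=\mu_l^2(1+\mu_l^{-2})\le\mu_l^2(1+\mu_1^{-2})$, hence $(1+\mu_l^2)^{\eta/2}\le(1+\mu_1^{-2})^{\eta/2}|\mu_l|^\eta$; and (ii) from $|1-e^{-i\theta}|\le\min(2,|\theta|)$, already used to obtain \eqref{e2.5}, one gets $|\varphi_1(-i\theta)|\le\min(1,2/|\theta|)$, hence $|\varphi_1(-i\tau|\mu_l|^{\alpha})|\le\min\!\big(1,\tfrac{2}{\tau|\mu_l|^{\alpha}}\big)$. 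It then remains to bound $x^\eta\min(1,2/(\tau x^{\alpha}))$ over $x>0$.

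Third, the only computation worth isolating is the threshold split at $x_\ast=(2/\tau)^{1/\alpha}$: for $0<x\le x_\ast$ the minimum equals $1$ and $x^\eta\le x_\ast^{\eta}=2^{\eta/\alpha}\tau^{-\eta/\alpha}$; for $x>x_\ast$ the minimum equals $2/(\tau x^{\alpha})$, and since $\eta\le\alpha$ makes $x^{\eta-\alpha}$ nonincreasing, $\tfrac{2}{\tau}x^{\eta-\alpha}\le\tfrac{2}{\tau}x_\ast^{\eta-\alpha}=2^{\eta/\alpha}\tau^{-\eta/\alpha}$. Thus $x^\eta\min(1,2/(\tau x^{\alpha}))\le 2^{\eta/\alpha}\tau^{-\eta/\alpha}$ for all $x>0$; combined with (i)–(ii) this yields $(1+\mu_l^2)^{\eta/2}|\varphi_1(-i\tau|\mu_l|^{\alpha})|\le(1+\mu_1^{-2})^{\eta/2}2^{\eta/\alpha}\tau^{-\eta/\alpha}=C(\eta)\tau^{-\eta/\alpha}$. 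Inserting this into the Fourier sum and taking square roots completes the proof. I do not expect a genuine obstacle here: the bound is essentially a repackaging of \eqref{e2.5} already recorded in the paper, and the only points requiring care are invoking $\eta\le\alpha$ precisely in the $x>x_\ast$ branch (where monotonicity of $x^{\eta-\alpha}$ is needed) and tracking constants so that the final constant matches $C(\eta)=2^{\eta/\alpha}(1+\mu_1^{-2})^{\eta/2}$.
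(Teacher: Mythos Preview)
Your proof is correct and follows essentially the same strategy as the paper's: reduce by linearity, expand via Parseval, handle $l=0$ separately, and for $l\neq0$ use $(1+\mu_l^2)^{\eta/2}\le(1+\mu_1^{-2})^{\eta/2}|\mu_l|^\eta$ together with a bound on $|\varphi_1(-i\tau|\mu_l|^\alpha)|$. The only cosmetic difference is that the paper replaces your threshold split by the single interpolated inequality $|e^{i\rho}-1|\le 2^{\lambda}\rho^{1-\lambda}$ (taking $\lambda=\eta/\alpha$), which gives $|\varphi_1(-i\tau|\mu_l|^\alpha)|\le 2^{\eta/\alpha}(\tau|\mu_l|^\alpha)^{-\eta/\alpha}$ in one step and yields the identical constant.
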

\begin{proof}
It suffices to show that for any $v \in L^2(\Omega)$,
\begin{equation}\label{e3.22}
\left\|\varphi_1(i \tau \langle\nabla\rangle_{\alpha}) v\right\|_{H^\eta} \leq C(\eta) \tau^{-\frac{\eta}{\alpha}}\|v\|_{L^{2}}, \quad 0 \leq \eta \leq \alpha.
\end{equation}
Note that
\begin{equation}\label{e3.23}
\left|{e}^{i \rho}-1\right| \leq 2^\lambda \rho^{1-\lambda}, \quad \rho \in \mathbb{R}^{+}, \quad 0 \leq \lambda \leq 1 .
\end{equation}
By Parseval's identity, using \eqref{e3.23} with $\lambda=\frac{\eta}{\alpha}$ and recalling \eqref{e2.3}, we have
$$
\begin{aligned}
\|\varphi_1(i \tau \langle\nabla\rangle_{\alpha}) v\|_{H^\eta}^2 & =\sum_{l \in \mathbb{Z}}(1+\mu_l^2)^\eta|\varphi_1(-i \tau |\mu_l|^\alpha)|^2|\widehat{v}_l|^2 \\
& =|\widehat{v}_0|^2+\sum_{l \in \mathbb{Z} \backslash\{0\}}(1+\mu_l^2)^\eta\left|\frac{{e}^{i \tau |\mu_l|^\alpha}-1}{\tau |\mu_l|^\alpha}\right|^2|\widehat{v}_l|^2 \\
& \leq|\widehat{v}_0|^2+2^{\frac{2\eta}{\alpha}} \sum_{l \in \mathbb{Z} \backslash\{0\}}(1+\mu_l^2)^\eta(\tau |\mu_l|^\alpha)^{-\frac{2\eta}{\alpha}}|\widehat{v}_l|^2 \\
& =|\widehat{v}_0|^2+2^{\frac{2\eta}{\alpha}} \tau^{-\frac{2\eta}{\alpha}} \sum_{l \in \mathbb{Z} \backslash\{0\}}\left(\frac{1+\mu_l^2}{\mu_l^2}\right)^\eta|\widehat{v}_l|^2 \\
& \leq|\widehat{v}_0|^2+C(\eta)^2 \tau^{-\frac{2\eta}{\alpha}} \sum_{l \in \mathbb{Z} \backslash\{0\}}{|\widehat{v}_l|^2} \\
& \leq C(\eta)^2 \tau^{-\frac{2\eta}{\alpha}} \sum_{l \in \mathbb{Z}}{|\widehat{v}_l|^2}\\
&=C(\eta)^2 \tau^{-\frac{2\eta}{\alpha}} \|v\|_{L^2}^2,
\end{aligned}
$$
which proves \eqref{e3.22} and concludes the proof.
\end{proof}

\section{Optimal error bounds}
In this section, we establish optimal error bounds in $L^2$-norm and $H^{\frac{\alpha}{2}}$-norm for the semi-discretization \eqref{e2.2} and full discretization \eqref{e2.11} of the SFNLSE \eqref{e1.1}.

\subsection{Main results}
Let $T_\text{max}$ be the maximal existing time of the solution of the SFNLSE \eqref{e1.1} and take $0<T<T_\text{max}$ be a fixed time. Under the low regularity assumptions of potential and/or nonlinearity, i.e., $V \in L^\infty(\Omega)$ and $\sigma > 0$, we assume the exact solution $\psi \in C([0, T] ; H_{\text{\rm per}}^{\alpha}(\Omega)) \cap C^1([0, T] ; L^2(\Omega))$ and define
\begin{equation}\label{e3.1}
	M:=\max \left\{\|\psi\|_{L^{\infty}([0, T] ; H^{\alpha})},\|\psi\|_{L^{\infty}([0, T] ; L^{\infty})},\|\partial_t \psi\|_{L^{\infty}([0, T] ; L^2)},\|V\|_{L^{\infty}}\right\}.
\end{equation}
Note that our regularity assumption on the exact solution is compatible with the low regularity assumptions on potential and nonlinearity.

Let $\psi^{[n]}(0 \leq n \leq T / \tau)$ be the numerical approximation obtained by the EWI \eqref{e2.2}, then we have
\begin{theorem}[Optimal error bounds for the semi-discretization]\label{th3.1}
Under the assumptions that $V \in L^{\infty}(\Omega)$, $\sigma>0$, and $\psi \in C([0, T] ; H_{\text{\rm per}}^{\alpha}(\Omega)) \cap C^1([0, T] ; L^2(\Omega))$, there exists $\tau_0>0$ depending on $M$ and $T$ and sufficiently small such that for any $0<\tau<\tau_0$, we have $\psi^{[n]} \in H_{\text{\rm per}}^{\alpha}(\Omega)$ for $0 \leq n \leq T / \tau$ and
\begin{equation}\label{e3.2}
\begin{aligned}
&\|\psi(\cdot, t_n)-\psi^{[n]}\|_{L^2} \lesssim \tau, \quad\|\psi^{[n]}\|_{H^{\alpha}} \leq C(M), \\
&\|\psi(\cdot, t_n)-\psi^{[n]}\|_{H^\frac{\alpha}{2}} \lesssim \sqrt{\tau}, \quad 0 \leq n \leq \frac{T}{\tau}.
\end{aligned}
\end{equation}
Moreover, if $V \in H_{\mathrm{per}}^{\frac{\alpha}{2}}(\Omega)$, $\sigma>\frac{\alpha}{4}$, and $\psi \in C([0, T] ; H_{\mathrm{per}}^{\frac{3\alpha}{2}}(\Omega)) \cap C^1([0, T] ; H^{\frac{\alpha}{2}}(\Omega))$, we have, for $0<\tau<\tau_0$,
\begin{equation}\label{e3.3}
\|\psi(\cdot, t_n)-\psi^{[n]}\|_{H^\frac{\alpha}{2}} \lesssim \tau, \quad 0 \leq n \leq \frac{T}{\tau}.
\end{equation}
\end{theorem}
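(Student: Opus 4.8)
The plan is to prove Theorem \ref{th3.1} by a standard error-propagation argument, run first in $L^2$ to control the low-regularity parts, then bootstrapped to $H^{\alpha}$ to get a uniform bound on the numerical solution, and finally refined in $H^{\alpha/2}$ under the stronger hypotheses. Write $e^{[n]} := \psi(\cdot,t_n) - \psi^{[n]}$. Subtracting \eqref{e2.2} from Duhamel's formula \eqref{e2.1} gives $e^{[n+1]} = e^{i\tau\langle\nabla\rangle_\alpha} e^{[n]} + \mathcal{L}^{[n]} + \mathcal{R}^{[n]}$, where the \emph{local error} is
\begin{equation*}
	\mathcal{R}^{[n]} = -i\int_0^\tau e^{i(\tau-s)\langle\nabla\rangle_\alpha}\bigl[B(\psi(t_n+s)) - B(\psi(t_n))\bigr]\,\mathrm{d}s,
\end{equation*}
with $B$ from \eqref{e3.4}, and the \emph{propagated part} is $\mathcal{L}^{[n]} = -i\tau\varphi_1(i\tau\langle\nabla\rangle_\alpha)\bigl(B(\psi(t_n)) - B(\psi^{[n]})\bigr)$. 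For the $L^2$ estimate: the unitary group is an $L^2$-isometry, $\|\varphi_1(i\tau\langle\nabla\rangle_\alpha)\|_{L^2\to L^2}\le 1$, and Lemma \ref{lem3.4} gives $\|\mathcal{L}^{[n]}\|_{L^2} \le C(M)\tau\|e^{[n]}\|_{L^2}$ \emph{once we know $\|\psi^{[n]}\|_{L^\infty}\le M_0$}. For the local error, since $\psi\in C^1([0,T];L^2)$ and $B$ is $L^2$-Lipschitz along the exact (bounded) solution by Lemma \ref{lem3.4}, $\|B(\psi(t_n+s))-B(\psi(t_n))\|_{L^2}\le C(M)s$, so $\|\mathcal{R}^{[n]}\|_{L^2}\lesssim\tau^2$. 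Summing the recursion and applying discrete Gronwall yields $\|e^{[n]}\|_{L^2}\lesssim\tau$ — conditional on the $L^\infty$ bound on $\psi^{[n]}$.

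The crux is closing this $L^\infty$ assumption, and here the $H^\alpha$ bound enters: since $H^\alpha_{\mathrm{per}}(\Omega)\hookrightarrow L^\infty(\Omega)$ in 1D (as $\alpha>1/2$), it suffices to prove $\|\psi^{[n]}\|_{H^\alpha}\le C(M)$ uniformly. The mechanism is the smoothing estimate \eqref{e2.5}: $\varphi_1(i\tau\langle\nabla\rangle_\alpha)$ maps $L^2\to H^\alpha$ with norm $\lesssim\tau^{-1}$, so the nonlinear update in \eqref{e2.2} gains $\alpha$ derivatives — meaning $\psi^{[n]}\in H^\alpha$ for all $n$ by induction, but a priori with a constant that could grow. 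To get a \emph{uniform} $H^\alpha$ bound one decomposes $\psi^{[n]} = e^{i t_n\langle\nabla\rangle_\alpha}\psi_0 - i\tau\sum_{k=0}^{n-1} e^{i(n-1-k)\tau\langle\nabla\rangle_\alpha}\varphi_1(i\tau\langle\nabla\rangle_\alpha) B(\psi^{[k]})$, the analogue of the exact representation whose continuous counterpart is controlled by Lemma \ref{lem3.6}. The first term is bounded by $\|\psi_0\|_{H^\alpha}\le M$. For the sum, rather than summing $\tau\cdot\tau^{-1}$ naively (which loses), one compares it to the exact solution: writing $B(\psi^{[k]}) = B(\psi(t_k)) + (B(\psi^{[k]})-B(\psi(t_k)))$, the $B(\psi(t_k))$-part reassembles (up to a quadrature error) into $\langle\nabla\rangle_\alpha$ applied to a Duhamel integral, which Lemma \ref{lem3.6} bounds in $L^2$ by $\|\psi\|$-data (this is exactly why the hypothesis $\psi\in C^1([0,T];L^2)$ and $\partial_t\psi$ appearing in $M$ are there: $\partial_t(B(\psi))$ is in $L^1([0,T];L^2)$ because $B$ is $L^2$-Lipschitz along the exact solution), and the difference part is $\lesssim \tau^{-1}\cdot\tau\sum_k \|e^{[k]}\|_{L^2}\lesssim \sum_k\tau\cdot\tau \lesssim \tau$, using the already-established $L^2$ convergence. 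This gives $\|\psi^{[n]}\|_{H^\alpha}\le C(M)$. The subtlety — and the main obstacle — is the circular dependence: the $L^2$ estimate needs the $L^\infty$ (hence $H^\alpha$) bound, while the $H^\alpha$ bound as just sketched uses the $L^2$ estimate. This is resolved by a simultaneous induction on $n$: assume both $\|e^{[k]}\|_{L^2}\le C_1\tau$ and $\|\psi^{[k]}\|_{H^\alpha}\le C(M)$ (hence $\|\psi^{[k]}\|_{L^\infty}\le M_0$) for $k\le n$, then propagate both to $n+1$, choosing $\tau_0$ small enough (depending on $M,T$) that the constants close. The $H^{\alpha/2}$ bound $\|e^{[n]}\|_{H^{\alpha/2}}\lesssim\sqrt\tau$ then follows by interpolation: $\|e^{[n]}\|_{H^{\alpha/2}}\lesssim\|e^{[n]}\|_{L^2}^{1/2}\|e^{[n]}\|_{H^\alpha}^{1/2}\lesssim(\tau)^{1/2}(C(M))^{1/2}$.

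For the improved estimate \eqref{e3.3} under $V\in H^{\alpha/2}_{\mathrm{per}}$, $\sigma>\alpha/4$, and $\psi\in C([0,T];H^{3\alpha/2})\cap C^1([0,T];H^{\alpha/2})$, the plan is to redo the error recursion directly in $H^{\alpha/2}$. The group is an $H^{\alpha/2}$-isometry; $\varphi_1(i\tau\langle\nabla\rangle_\alpha)$ is bounded on $H^{\alpha/2}$ with norm $\le 1$; and crucially Lemma \ref{assum2} (which needs $\sigma>\alpha/4$, and rests on the H\"older-continuity-to-$H^{\alpha/2}$ mechanism of Lemma \ref{lem:alphanorm}) together with the algebra property of $H^{\alpha/2}$ in 1D — for the $V v$ term one uses $\|Vv-Vw\|_{H^{\alpha/2}}\lesssim\|V\|_{H^{\alpha/2}}\|v-w\|_{H^{\alpha/2}}$ — gives $\|B(v)-B(w)\|_{H^{\alpha/2}}\le C(M)\|v-w\|_{H^{\alpha/2}}$ whenever $\|v\|_{H^1},\|w\|_{H^1}$ are controlled (which holds: $\psi^{[n]}$ is bounded in $H^\alpha\subset H^1$ by the first part, and $\psi$ is bounded in $H^{\alpha}$). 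Hence the propagated term is $\le C(M)\tau\|e^{[n]}\|_{H^{\alpha/2}}$. For the local error in $H^{\alpha/2}$ one needs $\|B(\psi(t_n+s))-B(\psi(t_n))\|_{H^{\alpha/2}}\le C(M)s$, which uses the $H^{\alpha/2}$-Lipschitz bound on $B$ along the exact solution plus $\psi\in C^1([0,T];H^{\alpha/2})$; thus $\|\mathcal{R}^{[n]}\|_{H^{\alpha/2}}\lesssim\tau^2$. (The role of the extra $H^{3\alpha/2}$ regularity is to make the various boundedness constants for $B$ and for $\partial_t B(\psi)$ in the $H^{\alpha/2}$-framework finite — it is the analogue of the $H^\alpha$ assumption used to feed Lemma \ref{lem3.6} one notch higher.) Discrete Gronwall in $H^{\alpha/2}$ then yields \eqref{e3.3}. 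The main obstacle throughout is the interlocking induction in the first part — establishing the uniform $H^\alpha$ bound requires the Duhamel-type reassembly argument and the smoothing estimate to be balanced against each other and against the $L^2$ error simultaneously, and getting $\tau_0$ to depend only on $M$ and $T$ forces the constants in the two coupled inequalities to be tracked carefully.
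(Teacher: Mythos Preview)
Your proposal is correct, and for \eqref{e3.3} it matches the paper's argument essentially verbatim. For \eqref{e3.2}, however, your route to the uniform $H^\alpha$ bound differs from the paper's. You unfold the scheme into a discrete Duhamel sum, split $B(\psi^{[k]}) = B(\psi(t_k)) + (B(\psi^{[k]})-B(\psi(t_k)))$, and compare the first piece to the exact Duhamel representation of $\psi(t_n)$; the difference piece costs $\sum_k\|e^{[k]}\|_{L^2}\lesssim T$ (not $\tau$ as you wrote, but $O(1)$ suffices). The paper instead runs the one-step error recursion at an \emph{intermediate} level $H^{\varrho\alpha}$ with $\tfrac{1}{2}<\varrho\alpha<\alpha$: the key smoothing estimate (Lemma~\ref{lem3.9}) $\|\varphi_1(i\tau\langle\nabla\rangle_\alpha)v\|_{H^\eta}\lesssim\tau^{-\eta/\alpha}\|v\|_{L^2}$ yields the stability bound $\|\Phi^\tau(v)-\Phi^\tau(w)\|_{H^\eta}\le\|v-w\|_{H^\eta}+C\tau^{1-\eta/\alpha}\|v-w\|_{L^2}$ (Proposition~\ref{prop3.10}), and taking $\eta=\varrho\alpha$ the recursion telescopes to $\|e^{[n]}\|_{H^{\varrho\alpha}}\lesssim\tau^{1-\varrho}$ directly; Sobolev embedding $H^{\varrho\alpha}\hookrightarrow L^\infty$ then closes the induction without any global Duhamel reassembly, and the full $H^\alpha$ bound follows from $\eta=\alpha$. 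The paper's interpolated-smoothing recursion is tidier and more portable machinery; your approach is more hands-on but makes the mechanism (the discrete scheme shadowing the continuous Duhamel formula) more visible. One point to make explicit in your version: your appeal to Lemma~\ref{lem3.6} for the ``$B(\psi(t_k))$-part'' glosses over the quadrature error, whose $H^\alpha$ norm is not immediately handled by Lemma~\ref{lem3.6} because the piecewise-constant integrand has jumps at each $t_k$ --- the integration by parts picks up boundary terms there, each $O(\tau)$ in $L^2$ and summing to $O(T)$, so the argument does close, but this step deserves a line.
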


For $\psi^n(0 \leq n \leq T / \tau)$ obtained by the EWI-FS scheme \eqref{e2.11}, we have
\begin{theorem}[Optimal error bounds for the full discretization]\label{th4.1}
Under the assumptions that $V \in L^{\infty}(\Omega)$, $\sigma >0$, and $\psi \in C([0, T] ; H_{\mathrm{per}}^{m}(\Omega)) \cap C^1([0, T] ; L^2(\Omega))$ for some $m\geq \alpha$, there exists $\tau_0>0$ and $h_0>0$ depending on $M$ and $T$ and sufficiently small such that for any $0<\tau<\tau_0$ and $0<h<h_0$, we have
\begin{equation}\label{e4.1}
\begin{aligned}
& \|\psi(\cdot, t_n)-\psi^n\|_{L^2} \lesssim \tau+h^{m}, \quad\|\psi^n\|_{H^\alpha} \leq C(M), \\
& \|\psi(\cdot, t_n)-\psi^n\|_{H^\frac{\alpha}{2}} \lesssim \sqrt{\tau}+h^{m-\frac{\alpha}{2}}, \quad 0 \leq n \leq \frac{T}{\tau}.
\end{aligned}
\end{equation}
Moreover, if $V \in H_{\mathrm{per}}^{\frac{\alpha}{2}}(\Omega)$, $\sigma > \frac{\alpha}{4}$, and $\psi \in C([0, T] ; H_{\mathrm{per}}^{m}(\Omega)) \cap$ $C^1([0, T] ; H^{\frac{\alpha}{2}}(\Omega))$, with $m\geq \frac{3\alpha}{2}$, for $0<\tau<\tau_0$ and $0<h<h_0$, we have
\begin{equation}\label{e4.2}
\|\psi(\cdot, t_n)-\psi^n\|_{L^2} \lesssim \tau+h^{m}, \quad \|\psi(\cdot, t_n)-\psi^n\|_{H^{\frac{\alpha}{2}}} \lesssim \tau+h^{m-\frac{\alpha}{2}}, \  0 \leq n \leq \frac{T}{\tau}.
\end{equation}
\end{theorem}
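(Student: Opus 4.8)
The plan is to replay the semi-discrete analysis behind Theorem~\ref{th3.1}, carrying the spatial projection error along. Write $e^n=\psi(t_n)-\psi^n=\rho^n+\theta^n$ with $\rho^n=\psi(t_n)-P_N\psi(t_n)$ and $\theta^n=P_N\psi(t_n)-\psi^n\in Y_N$; the standard spectral estimate $\|\rho^n\|_{H^s}\lesssim h^{m-s}\|\psi(t_n)\|_{H^m}$ ($0\le s\le m$) disposes of $\rho^n$, giving $\|\rho^n\|_{L^2}\lesssim h^m$, $\|\rho^n\|_{H^{\alpha/2}}\lesssim h^{m-\alpha/2}$, and $\|\rho^n\|_{H^\alpha}\lesssim h^{m-\alpha}\le 1$. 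Applying $P_N$ to Duhamel's formula \eqref{e2.1} (it commutes with $e^{i\tau\langle\nabla\rangle_{\alpha}}$ and $\varphi_1(i\tau\langle\nabla\rangle_{\alpha})$) and subtracting \eqref{e2.11} yields the error recursion
\begin{equation*}
\theta^{n+1}=e^{i\tau\langle\nabla\rangle_{\alpha}}\theta^n-i\mathcal{L}^n-i\mathcal{P}^n-i\mathcal{N}^n,
\end{equation*}
where, with $B$ as in \eqref{e3.4}, $\mathcal{L}^n=\int_0^\tau e^{i(\tau-s)\langle\nabla\rangle_{\alpha}}P_N[B(\psi(t_n+s))-B(\psi(t_n))]\,\mathrm{d}s$ is the local consistency error, $\mathcal{P}^n=\tau\varphi_1(i\tau\langle\nabla\rangle_{\alpha})P_N[B(\psi(t_n))-B(P_N\psi(t_n))]$ the spatial error, and $\mathcal{N}^n=\tau\varphi_1(i\tau\langle\nabla\rangle_{\alpha})P_N[B(P_N\psi(t_n))-B(\psi^n)]$ the propagated error.

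I would establish the $L^2$-bound and the uniform $H^\alpha$-bound together by an induction on $n$ whose hypothesis is a fixed $L^\infty$-bound $\|\psi^k\|_{L^\infty}\le M+1$ for all $k\le n$ (chosen \emph{before} $\tau_0,h_0$, so every Lipschitz constant below depends on $M$ only). Under it, Lemma~\ref{lem3.4} gives $\|\mathcal{N}^n\|_{L^2}\lesssim\tau\|\theta^n\|_{L^2}$, $\|\mathcal{P}^n\|_{L^2}\lesssim\tau\|\rho^n\|_{L^2}\lesssim\tau h^m$, and $\|\mathcal{L}^n\|_{L^2}\le\int_0^\tau C(M)\|\psi(t_n+s)-\psi(t_n)\|_{L^2}\,\mathrm{d}s\lesssim\tau^2$ (using $\partial_t\psi\in C([0,T];L^2)$), so discrete Gronwall with $\theta^0=0$ yields $\|\theta^n\|_{L^2}\lesssim\tau+h^m$, hence $\|e^n\|_{L^2}\lesssim\tau+h^m$. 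The $H^\alpha$-bound is the delicate point: since $\tau\varphi_1(i\tau\langle\nabla\rangle_{\alpha})$ gains no power of $\tau$ in the $H^\alpha$-norm (Lemma~\ref{lem3.9} with $\eta=\alpha$), a step-by-step estimate would lose a factor $\tau^{-1}$. Instead I would apply $\langle\nabla\rangle_{\alpha}$ to the telescoped representation
\begin{equation*}
\psi^n=e^{it_n\langle\nabla\rangle_{\alpha}}P_N\psi_0-i\sum_{k=0}^{n-1}\int_{t_k}^{t_{k+1}}e^{i(t_n-s)\langle\nabla\rangle_{\alpha}}P_NB(\psi^k)\,\mathrm{d}s,
\end{equation*}
and perform a summation by parts in $k$ (the discrete counterpart of the integration by parts in the proof of Lemma~\ref{lem3.6}), which bounds $\|\langle\nabla\rangle_{\alpha}\psi^n\|_{L^2}$ by $\|\psi_0\|_{H^\alpha}+\|B(\psi^0)\|_{L^2}+\|B(\psi^{n-1})\|_{L^2}+\sum_{k=1}^{n-1}\|B(\psi^k)-B(\psi^{k-1})\|_{L^2}$. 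Because $\|\psi^k-\psi^{k-1}\|_{L^2}\le\|e^{i\tau\langle\nabla\rangle_{\alpha}}\psi^{k-1}-\psi^{k-1}\|_{L^2}+\tau\|B(\psi^{k-1})\|_{L^2}\lesssim\tau\big(\|\langle\nabla\rangle_{\alpha}\psi^{k-1}\|_{L^2}+C(M)\big)$ (using $|e^{i\theta}-1|\le|\theta|$), the last sum is controlled by $C(M)+C(M)\tau\sum_{k<n}\|\langle\nabla\rangle_{\alpha}\psi^k\|_{L^2}$, and a discrete Gronwall on $a_k:=\|\langle\nabla\rangle_{\alpha}\psi^k\|_{L^2}$ gives $a_n\le C(M)$, i.e.\ $\|\psi^n\|_{H^\alpha}\le C(M)$. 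Interpolating the $L^2$- and $H^\alpha$-bounds together with $H^s\hookrightarrow L^\infty$ ($s>1/2$) then makes $\|e^{n+1}\|_{L^\infty}\to0$ as $\tau_0,h_0\to0$, closing the induction for $\tau_0,h_0$ small; this is the first line of \eqref{e4.1}.

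For the energy norm I would split $\|e^n\|_{H^{\alpha/2}}\le\|\rho^n\|_{H^{\alpha/2}}+\|\theta^n\|_{H^{\alpha/2}}\lesssim h^{m-\alpha/2}+\|\theta^n\|_{H^{\alpha/2}}$. Under the weaker hypotheses, $\theta^n\in Y_N$ makes both the inverse inequality $\|\theta^n\|_{H^{\alpha/2}}\lesssim h^{-\alpha/2}\|\theta^n\|_{L^2}$ and the interpolation $\|\theta^n\|_{H^{\alpha/2}}\le\|\theta^n\|_{L^2}^{1/2}\|\theta^n\|_{H^\alpha}^{1/2}$ available; combined with $\|\theta^n\|_{L^2}\lesssim\tau+h^m$ and $\|\theta^n\|_{H^\alpha}\le C(M)$, a short case split according to whether $\tau\le h^\alpha$ gives $\|\theta^n\|_{H^{\alpha/2}}\lesssim\sqrt\tau+h^{m-\alpha/2}$, i.e.\ the second line of \eqref{e4.1}. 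Under the stronger hypotheses ($V\in H_{\mathrm{per}}^{\alpha/2}$, $\sigma>\alpha/4$, $\partial_t\psi\in C([0,T];H^{\alpha/2})$) I would instead run a Gronwall directly in $H^{\alpha/2}$ for $\theta^n$: by the 1D algebra property of $H^{\alpha/2}$ (valid since $\alpha/2>1/2$) for the $V$-part and Lemma~\ref{assum2} for the nonlinear part (using $H^\alpha\hookrightarrow H^1$ and the $H^\alpha$-bound already proved), $B$ is Lipschitz from $H^1$-balls into $H^{\alpha/2}$, and since $\varphi_1(i\tau\langle\nabla\rangle_{\alpha})$ and $P_N$ are contractions on $H^{\alpha/2}$, now \emph{every} source term carries a full factor $\tau$: $\|\mathcal{N}^n\|_{H^{\alpha/2}}\lesssim\tau\|\theta^n\|_{H^{\alpha/2}}$, $\|\mathcal{P}^n\|_{H^{\alpha/2}}\lesssim\tau\|\rho^n\|_{H^{\alpha/2}}\lesssim\tau h^{m-\alpha/2}$, and $\|\mathcal{L}^n\|_{H^{\alpha/2}}\le\int_0^\tau C(M)\|\psi(t_n+s)-\psi(t_n)\|_{H^{\alpha/2}}\,\mathrm{d}s\lesssim\tau^2$ (this is exactly where $\partial_t\psi\in C([0,T];H^{\alpha/2})$ enters). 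Discrete Gronwall with $\theta^0=0$ then gives $\|\theta^n\|_{H^{\alpha/2}}\lesssim\tau+h^{m-\alpha/2}$, hence \eqref{e4.2}, and no coupling between $\tau$ and $h$ is needed precisely because each contribution is $O(\tau)$. I expect the uniform $H^\alpha$-bound obtained through the summation-by-parts/Lemma~\ref{lem3.6} mechanism to be the main obstacle; the remaining estimates are routine given the lemmas of Section~3.
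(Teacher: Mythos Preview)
Your proposal is correct and follows the same overall architecture as the paper: split $e^n=\rho^n+\theta^n$ with $\theta^n=P_N\psi(t_n)-\psi^n$, combine a local truncation estimate (your $\mathcal L^n+\mathcal P^n$) with a stability estimate (your $\mathcal N^n$) to run Gronwall in $L^2$, use a case split on $\tau$ versus $h^\alpha$ for the $H^{\alpha/2}$-bound under the weak assumptions, and rerun Gronwall directly in $H^{\alpha/2}$ (via Lemma~\ref{assum2} and the algebra property) under the stronger assumptions. The one substantive difference is how you obtain the uniform $H^\alpha$-bound on $\psi^n$.

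The paper does \emph{not} argue this directly on the fully discrete scheme: it first invokes Theorem~\ref{th3.1} for the semi-discrete solution $\psi^{[n]}$, then compares $\psi^n$ with $P_N\psi^{[n]}$ (citing Proposition~4.3 of \cite{Bao2024} together with the inverse inequality) to transfer the $H^\alpha$-bound. With $\|\psi^n\|_{H^\alpha}$ and hence $\|\psi^n\|_{L^\infty}$ secured \emph{a priori}, the $L^2$-Gronwall runs cleanly without an induction on an $L^\infty$-hypothesis. Your route is different and more self-contained: you telescope the scheme, integrate by parts in $s$ inside each $[t_k,t_{k+1}]$, and Abel-sum in $k$ to get $\|\langle\nabla\rangle_\alpha\psi^n\|_{L^2}\le \|\psi_0\|_{H^\alpha}+\|B(\psi^0)\|_{L^2}+\|B(\psi^{n-1})\|_{L^2}+\sum_{k}\|B(\psi^k)-B(\psi^{k-1})\|_{L^2}$, then close a discrete Gronwall on $a_k=\|\langle\nabla\rangle_\alpha\psi^k\|_{L^2}$ via $\|\psi^k-\psi^{k-1}\|_{L^2}\lesssim\tau(a_{k-1}+C(M))$. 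This is exactly a discrete version of Lemma~\ref{lem3.6} applied to the numerical solution itself, and it works; the price is that the $L^\infty$-bound, the $L^2$-error, and the $H^\alpha$-bound must now be propagated simultaneously in the induction, so the bootstrap is a bit more intertwined than in the paper. A minor secondary difference: in the case $\tau>h^\alpha$ the paper obtains the $H^{\alpha/2}$-bound by summing the one-step estimate at level $\eta=\alpha/2$ (Propositions~\ref{prop4.1-1}--\ref{prop4.2-1}), whereas you interpolate between the $L^2$-error and the $H^\alpha$-bound; both give $\sqrt\tau+h^{m-\alpha/2}$ after the case split.
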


\begin{remark}
	Compared to Theorem \ref{th3.1}, we assume the exact solution has possibly higher regularity $H^m$ with $m \geq \alpha $ for \eqref{e4.1} and $m \geq {3\alpha}/{2}$ for \eqref{e4.2} to show that the spatial convergence order of the FS method is always optimal with respect to the actual regularity of the exact solution. This is confirmed by extensive numerical results in Section\ref{sec:num}.
\end{remark}

\begin{remark}[Generalization to higher dimensions]\label{rem4.4}
Both Theorem \ref{th3.1} and \ref{th4.1} are stated for the 1D case. For the 2D case, \eqref{e3.2} and \eqref{e4.1} remains unchanged while, for \eqref{e3.3} and \eqref{e4.2} to hold, we need to assume $\sigma \geq 1/2$. For the 3D case, we need, $\alpha >3/2$ for \eqref{e3.2} and \eqref{e4.1} to ensure $H^\alpha \hookrightarrow L^\infty$, and, need, in addition, $\sigma >3/4$ for \eqref{e3.3} and \eqref{e4.2}. The details of the proof can be found in the Appendix.
\end{remark}

In the following, we first prove Theorem \ref{th3.1}. We start with the proof of \eqref{e3.2}, and the proof of \eqref{e3.3} can be obtained by the discrete Gronwall's inequality with the established uniform $H^{\alpha}$-bound of the semi-discretization solution in \eqref{e3.2}.

\subsection{Proof of Theorem \ref{th3.1}}
Before proving Theorem \ref{th3.1}, we first present two important propositions.
\begin{proposition}[Local truncation error in semi-discretization]\label{prop3.7} For $0 \leq n \leq T / \tau-1$, we have
\begin{equation}\label{e3.15}
\|\psi(t_{n+1})-\Phi^\tau(\psi(t_n))\|_{H^\eta} \lesssim \tau^{2-\frac{\eta}{\alpha}}, \quad 0 \leq \eta \leq \alpha.
\end{equation}
\end{proposition}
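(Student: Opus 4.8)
The plan is to compare the exact Duhamel formula \eqref{e2.1} over one step with the numerical flow $\Phi^\tau$ in \eqref{e2.3-1}, isolating the local error as the quadrature defect in approximating $\psi(t_n+s)$ by $\psi(t_n)$ in the integrand. Writing
$$
\psi(t_{n+1}) - \Phi^\tau(\psi(t_n)) = -i\int_0^\tau e^{i(\tau-s)\langle\nabla\rangle_\alpha}\bigl[B(\psi(t_n+s)) - B(\psi(t_n))\bigr]\mathrm{d}s,
$$
with $B$ as in \eqref{e3.4}, I would split the estimate into two regimes according to $\eta$.

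For the $L^2$-type bound (the case $\eta=0$, giving $\tau^2$), since $e^{i(\tau-s)\langle\nabla\rangle_\alpha}$ is an isometry on $L^2$, it suffices to bound $\|B(\psi(t_n+s))-B(\psi(t_n))\|_{L^2}$. Using the uniform bound $\|\psi\|_{L^\infty([0,T];L^\infty)}\le M$ and Lemma \ref{lem3.4}, this is controlled by $C(M)\|\psi(t_n+s)-\psi(t_n)\|_{L^2}$, which is $\lesssim s\,\|\partial_t\psi\|_{L^\infty([0,T];L^2)} \le M s$ by the fundamental theorem of calculus in $C^1([0,T];L^2)$. Integrating in $s$ over $[0,\tau]$ yields $\tau^2$. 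This handles $\eta=0$.

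For $0<\eta\le\alpha$, the gain of $\tau^{2-\eta/\alpha}$ must come from the smoothing of $\varphi_1(i\tau\langle\nabla\rangle_\alpha)$; indeed I would rewrite the one-step error so that the regularizing operator appears explicitly. Splitting $\int_0^\tau e^{i(\tau-s)\langle\nabla\rangle_\alpha}\,\mathrm{d}s = \tau\,\varphi_1(i\tau\langle\nabla\rangle_\alpha)$ and adding-and-subtracting, write the local error as $-i\int_0^\tau e^{i(\tau-s)\langle\nabla\rangle_\alpha}[B(\psi(t_n+s))-B(\psi(t_n))]\mathrm{d}s$. Since the integrand difference has $L^2$-norm $\lesssim M s$, and since applying $\langle\nabla\rangle_\alpha^{\eta/\alpha}$ (equivalently, measuring in $H^\eta$) to $\int_0^\tau e^{i(\tau-s)\langle\nabla\rangle_\alpha}(\cdots)\mathrm{d}s$ can be estimated by combining the oscillatory-integral/smoothing bound \eqref{e3.23} with $\lambda=\eta/\alpha$ — exactly as in the proof of Lemma \ref{lem3.9} — one expects each frequency to contribute a factor $\tau^{-\eta/\alpha}$ relative to the $L^2$ case, while the $\tau^2$ from the time-regularity is retained; this gives $\tau^{2-\eta/\alpha}$. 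Concretely I would take the Fourier transform, bound $|e^{i(\tau-s)|\mu_l|^\alpha}-1|$-type factors (or the $\varphi_1$ multiplier) by $2^{\eta/\alpha}(\tau|\mu_l|^\alpha)^{-\eta/\alpha}$ using \eqref{e3.23}, and use $\|B(\psi(t_n+s))-B(\psi(t_n))\|_{L^2}\lesssim Ms$ together with Parseval.

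The main obstacle is the $0<\eta\le\alpha$ case: one must be careful that the smoothing factor $\tau^{-\eta/\alpha}$ is genuinely extracted \emph{after} the time integration, not before, so that the $s$-integral still produces the full $\tau^2$ and the two effects multiply correctly; a naive pointwise-in-$s$ application of Lemma \ref{lem3.9} to $B(\psi(t_n+s))-B(\psi(t_n))$ would lose the needed power of $\tau$ unless the $Ms$ time-regularity factor is kept inside the integral. The clean way is to treat $w(\tau) := -i\int_0^\tau e^{i(\tau-s)\langle\nabla\rangle_\alpha} g(s)\,\mathrm{d}s$ with $g(s)=B(\psi(t_n+s))-B(\psi(t_n))$, note $\|g(s)\|_{L^2}\lesssim Ms$, and estimate $\|w(\tau)\|_{H^\eta}$ directly in Fourier space, using $|{e}^{i(\tau-s)|\mu_l|^\alpha}-1|\le 2^{\eta/\alpha}\bigl((\tau-s)|\mu_l|^\alpha\bigr)^{1-\eta/\alpha}$ only on the high frequencies and the trivial bound on $|\mu_l|\le$ const — summing the resulting series as in Lemma \ref{lem3.9} and integrating the $s$-dependence then yields \eqref{e3.15}.
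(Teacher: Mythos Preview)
Your treatment of the case $\eta=0$ is correct and matches the paper. The gap is in the range $0<\eta\le\alpha$.

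The smoothing you want to invoke from Lemma \ref{lem3.9} is a property of $\varphi_1(i\tau\langle\nabla\rangle_\alpha)$, and it comes from the \emph{division by $z$} in $\varphi_1(z)=(e^z-1)/z$: at frequency $l\neq 0$ the multiplier is $(e^{-i\tau|\mu_l|^\alpha}-1)/(i\tau|\mu_l|^\alpha)$, and the denominator supplies the decay $|\mu_l|^{-\alpha}$. That structure arises only when you integrate a \emph{constant} integrand. In the local error
\[
w(\tau)=-i\int_0^\tau e^{i(\tau-s)\langle\nabla\rangle_\alpha}g(s)\,\mathrm{d}s,\qquad g(s)=B(\psi(t_n+s))-B(\psi(t_n)),
\]
the integrand is precisely the non-constant part, so no $\varphi_1$ appears. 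In Fourier variables you face $\widehat{w}_l=-i\int_0^\tau e^{-i(\tau-s)|\mu_l|^\alpha}\widehat{g}_l(s)\,\mathrm{d}s$ with $|e^{-i(\tau-s)|\mu_l|^\alpha}|=1$, and splitting off $e^{-i(\tau-s)|\mu_l|^\alpha}-1$ via \eqref{e3.23} gives a factor $((\tau-s)|\mu_l|^\alpha)^{1-\eta/\alpha}$ that \emph{grows} in $|\mu_l|$, not decays. In short, knowing only $\|g(s)\|_{L^2}\lesssim Ms$ gives you no leverage in $H^\eta$; the argument you sketch does not produce $\tau^{2-\eta/\alpha}$.

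What the paper actually does is obtain the endpoint $\eta=\alpha$ by integration by parts in time (Lemma \ref{lem3.6}): this converts the frequency weight $|\mu_l|^\alpha$ into a time derivative of the phase, and after integrating by parts (using $g(0)=0$) one gets
\[
\|\langle\nabla\rangle_\alpha w(\tau)\|_{L^2}\le \|g\|_{L^\infty([0,\tau];L^2)}+\|\partial_t g\|_{L^1([0,\tau];L^2)}\lesssim \tau.
\]
The crucial ingredient you are missing is the bound $\|\partial_t g\|_{L^\infty([0,\tau];L^2)}\lesssim 1$, which requires differentiating $B(\psi(t_n+s))$ in time and is a separate lemma (quoted in the paper from \cite{Bao2024}). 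Once you have $\|w(\tau)\|_{L^2}\lesssim\tau^2$ and $\|w(\tau)\|_{H^\alpha}\lesssim\tau$, the intermediate exponents follow by Gagliardo--Nirenberg interpolation, not by a direct frequency argument.
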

\begin{proof}
Recalling \eqref{e2.1}, \eqref{e2.3-1} and \eqref{e3.4}, we have
\begin{align}\label{e3.18}
\psi(t_{n+1})-\Phi^\tau(\psi(t_n)) & =-i \int_0^\tau {e}^{i(\tau-s) \langle\nabla\rangle_{\alpha}}(B(\psi(t_n+s))-B(\psi(t_n))) \mathrm{d} s \notag\\
& =-i \int_0^\tau {e}^{i(\tau-s) \langle\nabla\rangle_{\alpha}} g_n(s) \mathrm{d} s, \quad 0 \leq n \leq \frac{T}{\tau}-1,
\end{align}
where $g_n(t):=B(\psi(t_n+t))-B(\psi(t))$ for $0 \leq s \leq \tau$. By Lemma 3.5 in \cite{Bao2024}, $g \in C([0, \tau] ; L^2(\Omega)) \cap W^{1, \infty}([0, \tau] ; L^2(\Omega))$ satisfies
\begin{align}
	&\left\|g\right\|_{L^{\infty}\left([0, \tau] ; L^2\right)} \lesssim \tau, \label{e3.7} \\
	&\left\|\partial_t g\right\|_{L^{\infty}\left([0, \tau] ; L^2\right)} \lesssim 1, \label{e3.8}
\end{align}
From \eqref{e3.18}, using \eqref{e3.7}, one gets
\begin{equation}\label{e3.19}
\|\psi(t_{n+1})-\Phi^\tau(\psi(t_n))\|_{L^2} \leq \int_0^\tau\|g_n(s)\|_{L^2}\mathrm{d} s \lesssim \tau^2,
\end{equation}
which proves \eqref{e3.15} for $\eta=0$. Then we will prove \eqref{e3.15} with $\eta=\alpha$. Applying Lemma \ref{lem3.6} to \eqref{e3.18}, using \eqref{e3.7}, \eqref{e3.8} and noting $g_n(0)=0$, we have
\begin{align}\label{e3.20}
& \|\langle\nabla\rangle_{\alpha}(\psi(t_{n+1})-\Phi^\tau(\psi(t_n)))\|_{L^2} \notag\\
& \leq\|g_n\|_{L^{\infty}([0, \tau] ; L^2)}+\|g_n(0)\|_{L^2}+\|\partial_t g_n\|_{L^1([0, \tau] ; L^2)} \notag\\
& \lesssim  \tau+\tau\|\partial_t g_n\|_{L^{\infty}([0, \tau] ; L^2)} \lesssim  \tau,
\end{align}
which combined with \eqref{e3.19} implies
\begin{equation}\label{e3.21}
\|\psi(t_{n+1})-\Phi^\tau(\psi(t_n))\|_{H^\alpha} \lesssim \tau, \quad 0 \leq n \leq \frac{T}{\tau}-1.
\end{equation}
The conclusion \eqref{e3.15} with $0<\eta<\alpha$ will follow from the Gagliardo-Nirenberg interpolation inequalities.
\end{proof}

\begin{proposition}[Stability estimate of semi-discretization]\label{prop3.10} Let $v, w \in H_{\mathrm{per}}^{\alpha}(\Omega)$ such that $\|v\|_{L^{\infty}} \leq$ $M_0$ and $\|w\|_{L^{\infty}} \leq M_0$ and let $0<\tau<1$. Then we have, for $0 \leq \eta \leq \alpha$,
$$
\|\Phi^\tau(v)-\Phi^\tau(w)\|_{H^\eta} \leq\|v-w\|_{H^\eta}+C(M_0) \tau^{1-\frac{\eta}{\alpha}}\|v-w\|_{L^2}.
$$
\end{proposition}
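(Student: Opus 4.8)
The plan is to exploit the affine structure of the numerical flow. Writing $B(\phi) = V\phi + G(\phi)$ as in \eqref{e3.4}, the definition \eqref{e2.3-1} gives the exact identity
\begin{equation*}
\Phi^\tau(v) - \Phi^\tau(w) = e^{i\tau\langle\nabla\rangle_{\alpha}}(v-w) - i\tau\,\varphi_1(i\tau\langle\nabla\rangle_{\alpha})\bigl(B(v) - B(w)\bigr),
\end{equation*}
so it suffices to bound the two terms on the right-hand side separately in the $H^\eta$-norm. For the linear term, one observes that $e^{i\tau\langle\nabla\rangle_{\alpha}}$ acts on Fourier coefficients as multiplication by $e^{-i\tau|\mu_l|^\alpha}$, which has modulus one; hence it is an isometry on $H_{\mathrm{per}}^\eta(\Omega)$ for every $\eta\geq 0$, and $\|e^{i\tau\langle\nabla\rangle_{\alpha}}(v-w)\|_{H^\eta} = \|v-w\|_{H^\eta}$. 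Since $v,w\in H_{\mathrm{per}}^{\alpha}(\Omega) \subset H_{\mathrm{per}}^\eta(\Omega)$ for $0\leq\eta\leq\alpha$, this quantity is finite.

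For the nonlinear term, the idea is to trade the regularity loss incurred by $\varphi_1$ against the prefactor $\tau$. First apply Lemma \ref{lem3.9}, in the single-function form \eqref{e3.22}, to $u := B(v) - B(w) \in L^2(\Omega)$, obtaining $\|\varphi_1(i\tau\langle\nabla\rangle_{\alpha})u\|_{H^\eta} \leq C(\eta)\tau^{-\eta/\alpha}\|u\|_{L^2}$ with $C(\eta) = 2^{\eta/\alpha}(1+\mu_1^{-2})^{\eta/2}$, which is bounded uniformly for $\eta\in[0,\alpha]$. Next, since $\|v\|_{L^\infty}\leq M_0$ and $\|w\|_{L^\infty}\leq M_0$, Lemma \ref{lem3.4} yields $\|B(v)-B(w)\|_{L^2} \leq C(M_0,\|V\|_{L^\infty})\|v-w\|_{L^2}$. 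Multiplying by $\tau$ then gives
\begin{equation*}
\tau\,\|\varphi_1(i\tau\langle\nabla\rangle_{\alpha})(B(v)-B(w))\|_{H^\eta} \leq C(M_0)\,\tau^{1-\frac{\eta}{\alpha}}\|v-w\|_{L^2},
\end{equation*}
and combining this with the isometry bound for the linear term completes the proof.

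There is essentially no serious obstacle here: the estimate is a direct concatenation of Lemma \ref{lem3.9} and Lemma \ref{lem3.4} with the unitarity of the free propagator. The only points requiring a little care are verifying that all quantities are well defined in $H^\eta$ — guaranteed by the hypotheses $v,w\in H_{\mathrm{per}}^{\alpha}(\Omega)$ and $\eta\leq\alpha$, which also ensure $\|v\|_{L^\infty},\|w\|_{L^\infty}$ are finite via the embedding $H^\alpha\hookrightarrow L^\infty$ in 1D — and noting that the constant $C(\eta)$ from Lemma \ref{lem3.9} may be absorbed into $C(M_0)$ uniformly over $\eta\in[0,\alpha]$.
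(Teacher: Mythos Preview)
Your proposal is correct and follows essentially the same approach as the paper: decompose $\Phi^\tau(v)-\Phi^\tau(w)$ into the free propagator part (handled by isometry) and the $\varphi_1$ part, then apply Lemma~\ref{lem3.9} followed by Lemma~\ref{lem3.4} to obtain the factor $\tau^{1-\eta/\alpha}\|v-w\|_{L^2}$. The paper's proof is identical up to presentation.
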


\begin{proof}
Recalling \eqref{e2.2} and \eqref{e3.4}, we have
\begin{equation}\label{e3.24}
\Phi^\tau(u)={e}^{i \tau \langle\nabla\rangle_{\alpha}} u-i \tau \varphi_1(i \tau \langle\nabla\rangle_{\alpha}) B(u), \quad u \in H_{\mathrm{per}}^{\alpha}(\Omega) .
\end{equation}

Using the isometry property of ${e}^{i t \langle\nabla\rangle_{\alpha}}$, Lemma \ref{lem3.9} and Lemma \ref{lem3.4}, we have
\begin{align}
&\|\Phi^\tau(v)-\Phi^\tau(w)\|_{H^\eta} \notag \\
& \leq \|{e}^{i \tau \langle\nabla\rangle_{\alpha}} v-{e}^{i \tau \langle\nabla\rangle_{\alpha}} w\|_{H^\eta}+\tau\|\varphi_1(i \tau \langle\nabla\rangle_{\alpha})(B(v)-B(w))\|_{H^\eta} \notag \\
& \leq\|v-w\|_{H^\eta}+C(\eta) \tau^{1-\frac{\eta}{\alpha}}\|B(v)-B(w)\|_{L^2} \notag \\
& \leq\|v-w\|_{H^\eta}+C(\eta) \tau^{1-\frac{\eta}{\alpha}} C(M_0,\|V\|_{L^{\infty}})\|v-w\|_{L^2} .
\end{align}

The conclusion follows from letting $C(M_0)=C(\eta) C(M_0,\|V\|_{L^{\infty}})$.

\end{proof}

\begin{proof}[Proof of Theorem \ref{th3.1}.]
Define the error function $e^{[n]}:=\psi\left(t_n\right)-\psi^{[n]}$ for $0 \leq n \leq T / \tau$. For $0 \leq n \leq T / \tau-1$ and $0 \leq \eta \leq \alpha$, we have
\begin{equation}\label{e3.25}
\begin{aligned}
\|e^{[n+1]}\|_{H^\eta} & =\|\psi(t_{n+1})-\psi^{[n+1]}\|_{H^\eta}=\|\psi(t_{n+1})-
\Phi^\tau(\psi^{[n]})\|_{H^\eta} \\
& \leq\|\psi(t_{n+1})-\Phi^\tau(\psi(t_n))\|_{H^\eta}+
\|\Phi^\tau(\psi(t_n))-\Phi^\tau(\psi^{[n]})\|_{H^\eta} .
\end{aligned}
\end{equation}

We will show that when $0<\tau<\tau_{0}<1$, for $0\leq n\leq T/\tau$
\begin{equation}\label{e3.27}
\|e^{[n]}\|_{L^2} \lesssim \tau, \quad\|e^{[n]}\|_{H^{\varrho\alpha}} \lesssim \tau^{1-\varrho},
\end{equation}
where $0<\varrho<1, \varrho\alpha>\frac{d}{2}=\frac{1}{2}$.

We shall use an induction argument to prove it. When $n=0$, $e^{[n]}=0$, \eqref{e3.27} holds. We assume that \eqref{e3.27} holds for $0\leq n\leq m\leq T/\tau-1$. Taking $\eta=0$ and $\eta=\varrho\alpha$ in \eqref{e3.25}, using Proposition \ref{prop3.7} and \ref{prop3.10}, we obtain,
\begin{equation}\label{e3.31}
\begin{aligned}
\|e^{[n+1]}\|_{L^2} \leq(1+C_0 \tau)\|e^{[n]}\|_{L^2}+C_1 \tau^2,
\end{aligned}
\end{equation}
\begin{equation}\label{e3.32}
\begin{aligned}
\|e^{[n+1]}\|_{H^{\varrho\alpha}} \leq \|e^{[n]}\|_{H^{\varrho\alpha}}+C_0 \tau^{1-\varrho}\|e^{[n]}\|_{L^2}+C_1 \tau^{2-\varrho},
\end{aligned}
\end{equation}
under the given regularity assumptions and the assumption for induction, by the fractional Sobolev embedding $H^{\varrho\alpha} \hookrightarrow L^{\infty}$ (see Theorem 2.2.9 (5) in \cite{Guo2011}), $C_0$ and $C_1$ are uniformly bounded. Applying the discrete Gronwall's inequality to \eqref{e3.31}, we have $\|e^{[m+1]}\|_{L^2} \lesssim \tau$. Summing over $n$ from 0 to $m$ in \eqref{e3.32}, we obtain $\|e^{[m+1]}\|_{H^{\varrho\alpha}}\lesssim \tau^{1-\varrho}$. Then we prove \eqref{e3.27} for $n=m+1$, and thus for all $0 \leq n \leq T / \tau$ by mathematical induction. The rest of the proof follows immediately. More similar details can be found in \citep{Bao2024,Bao2024sEWI}.

To prove \eqref{e3.3} in Theorem \ref{th3.1}, we assume that $V \in  H_{\text {per}}^{\frac{\alpha}{2}}(\Omega)$, $\sigma > \frac{\alpha}{4}$, $\psi \in C([0, T] ; H_{\text {per}}^{\frac{3\alpha}{2}}(\Omega)) \cap$ $C^1([0, T] ; H^{\frac{\alpha}{2}}(\Omega))$ and $0<\tau<\tau_0$. By Lemma \ref{assum2}, we know $B: H_{\text {per }}^{\frac{\alpha}{2}}(\Omega) \rightarrow H_{\text {per }}^{\frac{\alpha}{2}}(\Omega)$ satisfies
\begin{equation}\label{e3.38}
\|B(v)-B(w)\|_{H^\frac{\alpha}{2}} \leq C(\|v\|_{H^{1}}^{2\sigma},\|w\|_{H^{1}}^{2\sigma},\|V\|_{H^{\frac{\alpha}{2}}})\|v-w\|_{H^\frac{\alpha}{2}}.
\end{equation}

Using \eqref{e3.38}, the isometry property of ${e}^{i t \langle\nabla\rangle_{\alpha}}$, and noting that $\psi \in C^1([0, T] ; H^{\frac{\alpha}{2}}(\Omega))$, we have, for $0 \leq n \leq T / \tau-1$,
\begin{equation}\label{e3.39}
\|\psi(t_{n+1})-\Phi^\tau(\psi(t_n))\|_{H^{\frac{\alpha}{2}}} \leq \int_0^\tau\|B(\psi(t_n+s))-B(\psi(t_n))\|_{H^{\frac{\alpha}{2}}} \mathrm{d} s \lesssim \tau^2 .
\end{equation}
Noting that $|\varphi_1(i \theta)| \leq 1$ for $\theta \in \mathbb{R}$, we have
\begin{equation}\label{e3.40}
\|\varphi_1(i \tau \Delta) v\|_{H^{\frac{\alpha}{2}}} \leq\|v\|_{H^{\frac{\alpha}{2}}}, \quad v \in H_{\mathrm{per}}^{\frac{\alpha}{2}}(\Omega),
\end{equation}
which implies, by recalling \eqref{e3.24} and using \eqref{e3.38} again,
\begin{align}\label{e3.41}
\|\Phi^\tau(\psi(t_n))-\Phi^\tau(\psi^{[n]})\|_{H^{\frac{\alpha}{2}}} & \leq\|\psi(t_n)-\psi^{[n]}\|_{H^{\alpha/2}}+\tau\|B(\psi(t_n))
-B(\psi^{[n]})\|_{H^{\frac{\alpha}{2}}} \notag\\
& \leq(1+C_{2} \tau)\|\psi(t_n)-\psi^{[n]}\|_{H^{\frac{\alpha}{2}}}, \quad 0 \leq n \leq \frac{T}{\tau}-1,
\end{align}
where $C_{2}$ depends on $\|V\|_{H^{\frac{\alpha}{2}}},\|\psi(t_n)\|_{H^{1}}^{2\sigma}$ and $\|\psi^{[n]}\|_{H^{1}}^{2\sigma}$, which are uniformly bounded. Then \eqref{e3.3} follows from \eqref{e3.39} and \eqref{e3.41} by the discrete Gronwall's inequality.
\end{proof}

\subsection{Proof of Theorem \ref{th4.1}}
In this subsection, we prove Theorem \ref{th4.1} for the full discretization by the EWI-FS method. Particular attention is paid to avoiding any coupling condition between the time step size $\tau$ and the mesh size $h$, and to establishing optimal spatial convergence orders with respect to the regularity of the exact solution. One should note that the technique used in \citep{Bao2024}, i.e., comparing the semi-discrete numeircal solution with the fully discrete numerical solution, cannot give optimal orders when $m > \alpha$ in \eqref{e4.1} since we only have a uniform $H^\alpha$-norm bound of the semi-discrete numerical solution. Here, we directly estimate the error between the exact solution and the fully discrete numerical solution with techniques from Theorem 6.1 in \citep{wangthesis}.

\begin{proposition}[Local truncation error of the full discretization]\label{prop4.1-1} Under the assumptions of Theorem \ref{th4.1}, for $0 \leq n \leq T / \tau-1$, we have
\begin{equation}\label{e3.15-1}
\|P_{N}\psi(t_{n+1})-\Phi_{h}^\tau(P_{N}\psi(t_n))\|_{H^\eta} \lesssim \tau^{2-\frac{\eta}{\alpha}}+ \tau h^{m-\eta}, \quad 0 \leq \eta \leq \alpha.
\end{equation}
\end{proposition}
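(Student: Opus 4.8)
The plan is to compare $P_N \psi(t_{n+1})$ with $\Phi_h^\tau(P_N\psi(t_n))$ by first decomposing the error into a ``temporal-type'' part and a ``spatial projection'' part. Applying $P_N$ to Duhamel's formula \eqref{e2.1} (and using that $P_N$ commutes with ${e}^{i\tau\langle\nabla\rangle_\alpha}$ and with $\varphi_1(i\tau\langle\nabla\rangle_\alpha)$), I would write
\begin{align*}
	P_N\psi(t_{n+1})-\Phi_h^\tau(P_N\psi(t_n))
	&= -i\int_0^\tau {e}^{i(\tau-s)\langle\nabla\rangle_\alpha}P_N\big(B(\psi(t_n+s))-B(\psi(t_n))\big)\mathrm{d}s\\
	&\quad -i\tau\varphi_1(i\tau\langle\nabla\rangle_\alpha)P_N\big(B(\psi(t_n))-B(P_N\psi(t_n))\big).
\end{align*}
The first term is handled exactly as in Proposition \ref{prop3.7}: $P_N$ is a bounded operator on every $H^s_{\rm per}$, so the argument via Lemma \ref{lem3.6}, the bounds \eqref{e3.7}–\eqref{e3.8} on $g_n$, and Gagliardo–Nirenberg interpolation gives an $O(\tau^{2-\eta/\alpha})$ contribution in $H^\eta$ for $0\le\eta\le\alpha$, unchanged from the semi-discrete case.

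The new ingredient is the second term, which carries the spatial error. Using Lemma \ref{lem3.9} (with $\|v-w\|_{L^2}$ replaced by the projection error) we get a factor $\tau\cdot\tau^{-\eta/\alpha}=\tau^{1-\eta/\alpha}$ times $\|B(\psi(t_n))-B(P_N\psi(t_n))\|_{L^2}$; but that naive route loses a power of $\tau$ and does not produce the clean $\tau h^{m-\eta}$. Instead I would exploit the smoothing in $\varphi_1$ more carefully: since $B(\psi(t_n))-B(P_N\psi(t_n))$ has mean zero after... more precisely, I would instead keep the factor $\tau$ outside and bound $\|\varphi_1(i\tau\langle\nabla\rangle_\alpha)P_N w\|_{H^\eta}\lesssim \tau^{-\eta/\alpha}\|w\|_{L^2}$ only when $\eta>0$, and $\lesssim\|w\|_{H^\eta}$ (uniformly in $\tau$, since $|\varphi_1(i\theta)|\le1$) — this dichotomy is what lets me write the bound as $\tau^{2-\eta/\alpha}+\tau h^{m-\eta}$ rather than something worse. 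Then $w:=B(\psi(t_n))-B(P_N\psi(t_n))$ is estimated by the Lipschitz bound on $B$: by Lemma \ref{lem3.4}, $\|w\|_{L^2}\lesssim \|\psi(t_n)-P_N\psi(t_n)\|_{L^2}\lesssim h^m\|\psi(t_n)\|_{H^m}$ (standard projection error, using $\psi\in H^m_{\rm per}$ and $m\ge\alpha$), and, when a higher-norm bound on $w$ is needed, one combines the algebra/Hölder estimates of Lemmas \ref{assum1}–\ref{assum2} with the projection-error bound in the appropriate norm. Collecting the two terms gives \eqref{e3.15-1}.

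The main obstacle I anticipate is obtaining the spatial error with the correct \emph{optimal} exponent $h^{m-\eta}$ and only a single power of $\tau$, uniformly for all $0\le\eta\le\alpha$, without any CFL-type coupling. The difficulty is that $B$ is only Lipschitz from $L^2$ to $L^2$ under the weak $L^\infty$/$\sigma>0$ hypotheses (Lemma \ref{lem3.4}), so a direct $H^\eta$-estimate of $w$ with the full $h^{m-\eta}$ gain is not available for $\eta>0$; one must trade regularity for the inverse-smoothing factor $\tau^{-\eta/\alpha}$ in $\varphi_1$, and then argue that on the relevant frequency range $|\mu_l|\lesssim h^{-1}$ the factor $\tau^{-\eta/\alpha}$ against $h^{-\eta}$ is absorbed — equivalently, interpolating the $L^2$-bound $\|w\|_{L^2}\lesssim h^m$ against the crude bound $\|w\|_{H^\alpha}\lesssim h^{m-\alpha}$ (valid since $P_N\psi(t_n)$ is a trigonometric polynomial and $B$ maps $Y_N$ into a fixed finite-dimensional space on which an inverse inequality holds) yields $\|w\|_{H^\eta}\lesssim h^{m-\eta}$, which when fed through the uniform bound $\|\varphi_1 P_N w\|_{H^\eta}\lesssim\|w\|_{H^\eta}$ gives the clean $\tau h^{m-\eta}$ term. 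Making this interpolation rigorous under the low-regularity hypotheses — in particular ensuring the constants depend only on $M$ and $T$ and not on $N$ or $\tau$ — is the technical heart of the proof, and mirrors the argument of Theorem 6.1 in \citep{wangthesis}.
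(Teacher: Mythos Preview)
Your decomposition and the treatment of the temporal part are exactly what the paper does; the first integral is bounded by $\tau^{2-\eta/\alpha}$ via Proposition~\ref{prop3.7} just as you say.

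The spatial part, however, has a genuine gap. Your route (b) relies on the interpolation claim $\|w\|_{H^\eta}\lesssim h^{m-\eta}$ for $w=B(\psi(t_n))-B(P_N\psi(t_n))$, and the justification you give fails on both counts. First, $B$ does \emph{not} map $Y_N$ into a fixed finite-dimensional space: for non-integer $\sigma$ the nonlinearity $|\phi|^{2\sigma}\phi$ (and already the potential term $V\phi$ with $V$ merely $L^\infty$) produces infinitely many Fourier modes from $\phi\in Y_N$, and in any case $B(\psi(t_n))$ itself is not a trigonometric polynomial. Second, under the weak hypotheses $V\in L^\infty$, $\sigma>0$ there is no Lipschitz bound for $B$ in $H^\eta$ for $\eta>0$ (Lemma~\ref{lem3.4} is only $L^2\to L^2$), so the endpoint $\|w\|_{H^\alpha}\lesssim h^{m-\alpha}$ needed for interpolation is unavailable. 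Your route (a) gives $\tau^{1-\eta/\alpha}h^m$, which is not $\lesssim \tau h^{m-\eta}$ without a CFL condition $h^\alpha\lesssim\tau$, so it does not prove \eqref{e3.15-1} as stated.

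The missing idea is much simpler than interpolation: you do not need $\|w\|_{H^\eta}$ at all, only $\|P_Nw\|_{H^\eta}$, and $P_Nw\in Y_N$. Use $|\varphi_1(i\theta)|\le 1$ to get $\|\varphi_1(i\tau\langle\nabla\rangle_\alpha)P_Nw\|_{H^\eta}\le\|P_Nw\|_{H^\eta}$, then apply the inverse inequality on $Y_N$ directly:
\[
\|P_Nw\|_{H^\eta}\lesssim h^{-\eta}\|P_Nw\|_{L^2}\le h^{-\eta}\|w\|_{L^2}\lesssim h^{-\eta}\|\psi(t_n)-P_N\psi(t_n)\|_{L^2}\lesssim h^{m-\eta},
\]
where the third step is Lemma~\ref{lem3.4} (the required $L^\infty$-bounds on $\psi(t_n)$ and $P_N\psi(t_n)$ follow from $H^\alpha\hookrightarrow L^\infty$) and the last is the standard projection estimate. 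Multiplying by $\tau$ gives the clean $\tau h^{m-\eta}$ term with no CFL coupling. This is precisely the paper's argument in \eqref{eqq4.11}.
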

\begin{proof}
Recalling \eqref{e2.1}, \eqref{e2.11} and \eqref{e3.4}, we get
\begin{align}\label{eqq4.10}
	&P_{N}\psi(t_{n+1})-\Phi_{h}^\tau(P_{N}\psi(t_n)) \notag \\
	&=-i \int_0^\tau {e}^{i(\tau-s) \langle\nabla\rangle_{\alpha}}P_{N}(B(\psi(t_n+s))-B(P_{N}\psi(t_n))) \mathrm{d} s \notag \\
	& =-i \int_0^\tau {e}^{i(\tau-s) \langle\nabla\rangle_{\alpha}} [P_{N}(B(\psi(t_n+s))-B(\psi(t_n))) \notag \\
	&\hspace{9em} +P_{N}(B(\psi(t_n)))-P_{N}(B(P_{N}\psi(t_n)))] \mathrm{d} s.
\end{align}
By the isometry property of $e^{i t \langle\nabla\rangle_{\alpha}}$, the standard projection error estimate of $P_{N}$, $\|\phi-P_N \phi\|_{L^2} \lesssim h^{m}\|\phi\|_{H^{m}}, \forall \phi \in H_{\text {per }}^{m}(\Omega)$  and $\|P_N \phi\|_{H^{\eta}} \leq \|\phi\|_{H^{\eta}}$, the inverse estimate $\|\phi\|_{H^{\eta}}\lesssim h^{-\eta}\|\phi\|_{L^{2}}, \forall \phi \in Y_{N}$, Proposition \ref{prop3.7} and Lemma \ref{lem3.4}, we have
\begin{align}\label{eqq4.11}
&\|P_{N}\psi(t_{n+1})-\Phi_{h}^\tau(P_{N}\psi(t_n))\|_{H^{\eta}}\notag\\
&\leq \|\psi(t_{n+1})-\Phi^\tau(\psi(t_n))\|_{H^\eta}+\tau\|B(\psi(t_n))-B(P_N\psi(t_n))\|_{H^\eta}\notag\\
&\lesssim \tau^{2-\frac{\eta}{\alpha}}+\tau h^{-\eta}\|\psi(t_n)-P_N\psi(t_n)\|_{L^2}\notag\\
&\lesssim \tau^{2-\frac{\eta}{\alpha}}+\tau h^{m-\eta},
\end{align}
which concludes the proof.
\end{proof}

Following the proof of Lemma \ref{lem3.4}, \ref{lem3.9} and Proposition \ref{prop3.10}, we have the following stability estimate.
\begin{proposition}[Stability estimate of the full discretization]\label{prop4.2-1} Let $v, w \in Y_{N}$, such that $\|v\|_{L^{\infty}}\leq M_0$ and $\|w\|_{L^{\infty}}\leq M_0$. For any $\tau>0$, $0 \leq \eta \leq \alpha$, we have
\begin{equation}\label{e3.16-1}
\|\Phi_{h}^{\tau}(v)-\Phi_{h}^{\tau}(w)\|_{H^{\eta}}\leq \|v-w\|_{H^\eta}+C(M_0) \tau^{1-\frac{\eta}{\alpha}}\|v-w\|_{L^2}.
\end{equation}
\end{proposition}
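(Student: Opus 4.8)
The plan is to mirror the proof of Proposition~\ref{prop3.10} almost verbatim, replacing the semi-discrete flow $\Phi^\tau$ with the fully discrete flow $\Phi_h^\tau$ and inserting the projection $P_N$ wherever it appears. Recalling the definition
\begin{equation*}
	\Phi_h^\tau(u) = {e}^{i \tau \langle\nabla\rangle_{\alpha}} u-i \tau \varphi_1(i \tau \langle\nabla\rangle_{\alpha}) P_N B(u), \quad u \in Y_N,
\end{equation*}
I would write the difference $\Phi_h^\tau(v)-\Phi_h^\tau(w)$ as the sum of the linear part ${e}^{i \tau \langle\nabla\rangle_{\alpha}}(v-w)$ and the term $-i\tau\varphi_1(i\tau\langle\nabla\rangle_{\alpha})P_N(B(v)-B(w))$. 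The first step is to bound the linear part by $\|v-w\|_{H^\eta}$ using the isometry property of ${e}^{it\langle\nabla\rangle_{\alpha}}$ on every Sobolev space $H^\eta_{\mathrm{per}}$, exactly as before. The second step is to bound the nonlinear part: apply Lemma~\ref{lem3.9} to get $\|\varphi_1(i\tau\langle\nabla\rangle_{\alpha}) g\|_{H^\eta} \leq C(\eta)\tau^{-\eta/\alpha}\|g\|_{L^2}$ with $g = P_N(B(v)-B(w))$, then use $\|P_N g\|_{L^2}\leq \|g\|_{L^2}$ to discard the projection, and finally invoke Lemma~\ref{lem3.4} to get $\|B(v)-B(w)\|_{L^2}\leq C(M_0,\|V\|_{L^\infty})\|v-w\|_{L^2}$. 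Collecting the factors of $\tau$ gives the stated $\tau^{1-\eta/\alpha}$ and absorbing $C(\eta)C(M_0,\|V\|_{L^\infty})$ into a single $C(M_0)$ completes the estimate.

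One small subtlety worth noting is that Lemma~\ref{lem3.9} as stated requires $0<\tau<1$, whereas Proposition~\ref{prop4.2-1} claims validity for any $\tau>0$. For $\tau \geq 1$ the bound $\tau^{1-\eta/\alpha}$ is harmless because $|\varphi_1(i\tau|\mu_l|^\alpha)|\leq 1$ uniformly (since $|\varphi_1(i\theta)|\leq 1$ for real $\theta$), so $\|\tau\varphi_1(i\tau\langle\nabla\rangle_{\alpha})P_N(B(v)-B(w))\|_{H^\eta}$ only needs the cruder bound $\tau\|B(v)-B(w)\|_{H^\eta}$; but since $v,w\in Y_N$ and the projection error is absent, one can combine this with the inverse estimate on $Y_N$ if an $H^\eta$ bound on $B(v)-B(w)$ is not directly available. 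In practice the large-$\tau$ regime is irrelevant to the convergence analysis, so I would either state the sharp estimate for $0<\tau<1$ or remark briefly that the $\tau\geq 1$ case is trivial.

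There is essentially no main obstacle here: the proposition is a routine adaptation, and the paper itself signals this by saying ``Following the proof of Lemma~\ref{lem3.4}, \ref{lem3.9} and Proposition~\ref{prop3.10}.'' The only point requiring a moment's care is confirming that all the ingredients---the isometry of the linear propagator, the smoothing/boundedness estimate for $\varphi_1$, the $L^2$-stability of $P_N$, and the $L^2$-Lipschitz bound on $B$---continue to hold on the finite-dimensional space $Y_N$, which they do since $Y_N\subset H^s_{\mathrm{per}}(\Omega)$ for every $s$ and all the operators involved map $Y_N$ into $Y_N$. So the proof is simply: decompose, apply isometry to the linear term, apply Lemma~\ref{lem3.9} then $\|P_N\cdot\|_{L^2}\leq\|\cdot\|_{L^2}$ then Lemma~\ref{lem3.4} to the nonlinear term, and rename constants.
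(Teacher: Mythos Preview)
Your proposal is correct and matches the paper's own approach exactly: the paper does not write out a separate proof but simply says ``Following the proof of Lemma~\ref{lem3.4}, \ref{lem3.9} and Proposition~\ref{prop3.10},'' which is precisely the decomposition-plus-Lemma~\ref{lem3.9}-plus-$\|P_N\cdot\|_{L^2}\leq\|\cdot\|_{L^2}$-plus-Lemma~\ref{lem3.4} argument you describe. Your observation about the discrepancy between ``any $\tau>0$'' in the proposition and ``$0<\tau<1$'' in Lemma~\ref{lem3.9} and Proposition~\ref{prop3.10} is a genuine (though harmless) inconsistency in the paper's statement that the authors do not comment on.
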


\begin{proof}[Proof of Theorem \ref{th4.1}.]
By triangle inequality,
\begin{equation}\label{eq3.16-2}
	\| \psi(t_n)-\psi^n \|_{H^\alpha} \leq \|\psi(t_n)-\psi^{[n]}\|_{H^\alpha}+\|\psi^{[n]}-P_{N}\psi^{[n]}\|_{H^\alpha}+\|P_{N}\psi^{[n]}-\psi^n\|_{H^\alpha}.
\end{equation}
From \eqref{e3.2}, using the interpolation inequality, we have
\begin{equation}\label{eq3.16-3}
 \|\psi(t_n)-\psi^{[n]}\|_{H^\alpha}\lesssim 1.
\end{equation}
Following a similar procedure as Proposition 4.3 in \citep{Bao2024}, Theorem \ref{th3.1} and the inverse inequality $\|\phi\|_{H^{\alpha}}\lesssim h^{-\alpha}\|\phi\|_{L^2}$ for $\phi\in Y_{N}$ \citep{Canuto2006}, we have a uniform $H^\alpha$-norm bound of the fully discrete solution:
\begin{equation}
	\| \psi^n \|_{H^\alpha} \leq C(M), \quad 0 \leq n \leq \frac{T}{\tau},
\end{equation}
which implies, by the Sobolev embedding $H^{\alpha} \hookrightarrow L^\infty$, $\| \psi^n \|_{L^\infty} \ (0 \leq n \leq T/\tau)$ is also uniformly bounded for $0<\tau\leq \tau_0$.

Define the error function $e^{n}:=P_N \psi(t_{n})-\psi^{n}$ for $0\leq n\leq T/\tau$, then $e^0=0$. For $0\leq n\leq T/\tau-1$,
\begin{align}\label{eqq4.12}
e^{n+1}&=P_N \psi(t_{n+1})-\psi^{n+1}\notag\\
&=P_N \psi(t_{n+1})-\Phi_{h}^{\tau}(P_{N}\psi(t_{n}))+\Phi_{h}^{\tau}(P_{N}\psi(t_{n}))-\Phi_{h}^{\tau}(\psi^{n}).
\end{align}
From \eqref{eqq4.12}, using Proposition \ref{prop4.1-1} and \ref{prop4.2-1} with $\eta=0$, we have
\begin{equation}\label{eqq4.13}
\begin{aligned}
\|e^{n+1}\|_{L^2}\leq (1+C_3\tau)\|e^{n}\|_{L^2}+C_4\tau(\tau+h^m), \ 0\leq n\leq \frac{T}{\tau}-1,
\end{aligned}
\end{equation}
where $C_3$ depends on $M$ and $\|\psi^{n}\|_{L^{\infty}}$, and $C_4$ depends on $M$. Thus, both $C_3$ and $C_4$ are uniformly bounded. By discrete Gronwall's inequality, we have
\begin{equation}\label{eqq4.14}
\|e^{n}\|_{L^2}\lesssim \tau+h^m, \ 0\leq n \leq \frac{T}{\tau}.
\end{equation}

To obtain error estimates in $H^{\frac{\alpha}{2}}-$norm with optimal spatial convergence orders, we separately consider two cases: (i) $\tau\geq h^{\alpha}$ and (ii) $\tau<h^{\alpha}$. We start with (i), from \eqref{eqq4.12}, by Proposition \ref{prop4.1-1} and \ref{prop4.2-1} with $\eta=\alpha/2$ and \eqref{eqq4.14}, we have
\begin{align}\label{eqq4.15}
\|e^{n+1}\|_{H^{\frac{\alpha}{2}}}&\leq \|e^{n}\|_{H^{\frac{\alpha}{2}}}+C_3\tau^{\frac{1}{2}}\|e^n\|_{L^2}+C_4\tau(\tau^{\frac{1}{2}}+h^{m-\frac{\alpha}{2}})\notag\\
&\leq \|e^{n}\|_{H^{\frac{\alpha}{2}}}+C_5\tau^{\frac{1}{2}}(\tau+h^m)+C_4\tau(\tau^{\frac{1}{2}}+h^{m-\frac{\alpha}{2}}),
\end{align}
which implies by summing from 0 to $n$ and using $\tau\geq h^{\alpha}$ that
\begin{align}\label{eqq4.16}
\|e^{n+1}\|_{H^{\frac{\alpha}{2}}}
&\leq C_5(n+1)\tau^{\frac{1}{2}}(\tau+h^m)+C_4(n+1)\tau(\tau^{\frac{1}{2}}+h^{m-\frac{\alpha}{2}})\notag\\
&\lesssim T\tau^{-\frac{1}{2}}(\tau+h^m)+T(\tau^{\frac{1}{2}}+h^{m-\frac{\alpha}{2}})\notag\\
&\lesssim \tau^{\frac{1}{2}}+h^{m-\frac{\alpha}{2}}\left(\frac{h^{\frac{\alpha}{2}}}{\tau^{\frac{1}{2}}}\right)+\tau^{\frac{1}{2}}+h^{m-\frac{\alpha}{2}}\notag\\
&\lesssim \tau^{\frac{1}{2}}+h^{m-\frac{\alpha}{2}}.
\end{align}
For case (ii), the error bound can be obtained by the inverse inequality as
\begin{align}\label{eqq4.17}
\|e^{n+1}\|_{H^{\frac{\alpha}{2}}}&\lesssim h^{-\frac{\alpha}{2}}\|e^{n+1}\|_{L^2}\lesssim h^{-\frac{\alpha}{2}}(\tau+h^m)\notag\\
&=\tau^{\frac{1}{2}}\left(\frac{\tau^{\frac{1}{2}}}{h^{\frac{\alpha}{2}}}\right)+h^{m-\frac{\alpha}{2}}\leq \tau^{\frac{1}{2}}+h^{m-\frac{\alpha}{2}}.
\end{align}
Combining \eqref{eqq4.16} and \eqref{eqq4.17}, we have
\begin{equation}\label{eqq4.18}
\begin{aligned}
\|e^{n+1}\|_{H^{\frac{\alpha}{2}}}\lesssim \tau^{\frac{1}{2}}+h^{m-\frac{\alpha}{2}},\quad 0\leq n\leq \frac{T}{\tau}.
\end{aligned}
\end{equation}
Together with \eqref{eqq4.14}, and $\psi(\cdot, t_{n})-\psi^{n}=\psi(\cdot, t_{n})-P_N \psi(t_{n})+P_N \psi(t_{n})-\psi^{n}=\psi(\cdot, t_{n})-P_N \psi(t_{n})+e^n$, using the standard projection error estimate, we complete the proof of \eqref{e4.1} in Theorem \ref{th4.1}.

To prove \eqref{e4.2}, we assume that $V \in H_{\text {per }}^{\frac{\alpha}{2}}(\Omega)$, $\sigma > \frac{\alpha}{4}$, $\psi \in C([0, T] ; H_{\text {per}}^{m}(\Omega)) \cap$ $C^1([0, T] ; H^{\frac{\alpha}{2}}(\Omega))$ with $m\geq \frac{3\alpha}{2}$ and $0<\tau<\tau_0, 0<h<h_0$.  Since we already have the uniform control of $\psi^n$ in $H^\alpha$-norm, \eqref{e4.2} can be proved similarly to \eqref{e4.1}, and we just sketch the outline here.

Using Lemma \ref{lem3.4} and \eqref{e3.38}, we have, for $0 \leq n \leq T / \tau-1$, the local truncation error is
\begin{equation}\label{e4.22}
\begin{aligned}
& \|P_N \psi(t_{n+1})-\Phi_h^\tau(P_N \psi(t_n))\|_{L^2} \lesssim \tau^2+\tau h^{m}, \\
& \|P_N \psi(t_{n+1})-\Phi_h^\tau(P_N \psi(t_n))\|_{H^{\frac{\alpha}{2}}} \lesssim \tau^2+\tau h^{m-\frac{\alpha}{2}}. \\
\end{aligned}
\end{equation}
Besides, recalling \eqref{e2.11}, using Lemmas \ref{lem3.4} and \ref{lem3.9}, \eqref{e3.38} and \eqref{e3.40}, we obtain the stability estimates
\begin{equation}\label{e4.23}
\begin{aligned}
& \|\Phi_h^\tau(P_N \psi(t_n))-\Phi_h^\tau(\psi^n)\|_{L^2} \leq(1+C_6 \tau)\|P_N \psi(t_n)-\psi^n\|_{L^2}, \\
& \|\Phi_h^\tau(P_N \psi(t_n))-\Phi_h^\tau(\psi^n)\|_{H^{\frac{\alpha}{2}}} \leq(1+C_7 \tau)\|P_N \psi(t_n)-\psi^n\|_{H^{\frac{\alpha}{2}}},
\end{aligned}
\end{equation}
where $C_6$ depends on $\|P_N \psi(t_n)\|_{L^{\infty}}$ and $\|\psi^n\|_{L^{\infty}}$, and $C_7$ depends on $\|P_N \psi(t_n)\|_{H^{1}}^{2\sigma}$ and $\|\psi^n\|_{H^{1}}^{2\sigma}$. Thus, both $C_6$ and $C_7$ are under control. Then the proof can be completed by the discrete Gronwall's inequality and the standard projection error estimates of $P_N$.

\end{proof}

\section{Numerical results}\label{sec:num}
In this section, we present some numerical examples for the SFNLSE with either low regularity potential or nonlinearity. We choose $\Omega=(-16,16), T=1, d=1$ and consider the power-type nonlinearity $f(\rho)=$ $-\rho^\sigma(\sigma>0)$.

Let $\psi^n(0 \leq n \leq T / \tau)$ be the numerical solution obtained by the EWI-FS method or the EWI-FP method, which will be made clear in each case. Define the error functions
$$
e_{L^2}(t_n)=\|\psi(t_n)- \psi^n\|_{L^2}, \quad e_{H^\frac{\alpha}{2}}(t_n)=\|\psi(t_n)- \psi^n \|_{H^\frac{\alpha}{2}}, \quad 0 \leq n \leq \frac{T}{\tau}.
$$

\subsection{The SFNLSE with low regularity potential}\label{sub5.1new}
In this subsection, we consider the cubic SFNLSE with low regularity potential as
\begin{equation}\label{e5.1n}
\begin{aligned}
\quad i \partial_t \psi(x, t)&=(-\Delta)^{\frac{\alpha}{2}} \psi(x, t)+V(x) \psi(x, t)-|\psi(x, t)|^{2} \psi(x, t), \\
\psi_0(x)&=e^{-\frac{x^2}{2}},
\end{aligned}
\end{equation}
where $x \in \Omega, t>0$. $V(x)$ is chosen as the following two types:\\
(i) Type $\textrm{\uppercase\expandafter{\romannumeral1}}$.
\begin{equation}\label{e5.2n}
V_1(x)=\left\{\begin{array}{lr}
-4, & x \in(-2,2) \\
0, & \text { otherwise }
\end{array}.\right.
\end{equation}
(ii) Type $\textrm{\uppercase\expandafter{\romannumeral2}}$.
\begin{equation}\label{e5.3n}
V_2(x)=\textrm{real}\left(\sum_{l\in T_{M}}\widehat{v}_{l}e^{i\mu_{l}(x-a)}\right), \quad x \in \Omega,
\end{equation}
where
\begin{equation}\label{e5.3nn}
\widehat{v}_{l}=\left\{\begin{array}{ll}
\frac{x_{l}}{|\mu_{l}|^{\frac{1}{2}+0.01}}, & l\in T_{M}\backslash\{0\}, \quad M=2^{18}, \\
1, & l=0,
\end{array}\right.
\end{equation}
$x_l=\operatorname{rand}(-\frac{1}{2}, \frac{1}{2})+i \operatorname{rand}(-\frac{1}{2},\frac{1}{2}).$

\begin{figure}[ht!]
\centering
\includegraphics[scale=.42]{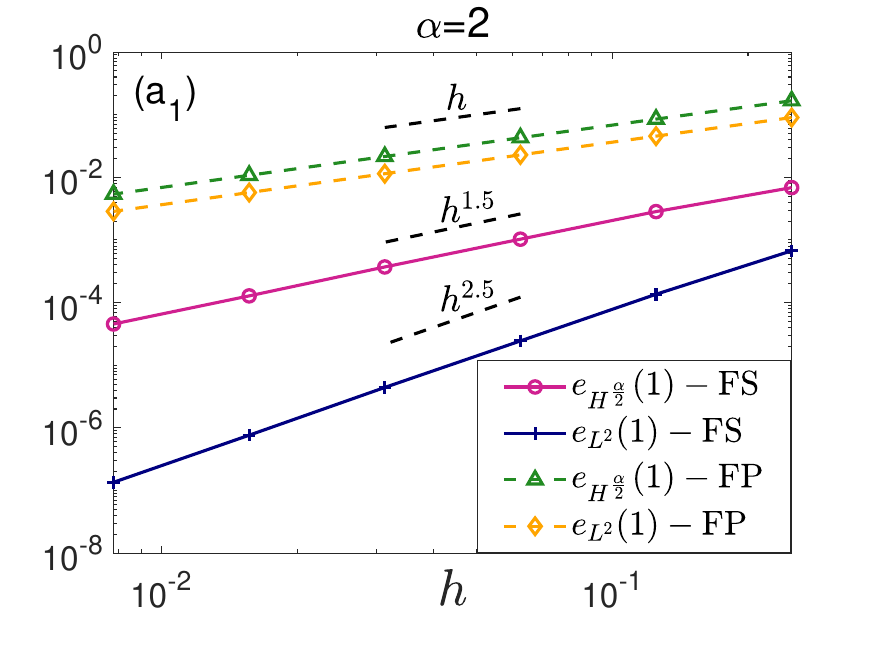}
\includegraphics[scale=.42]{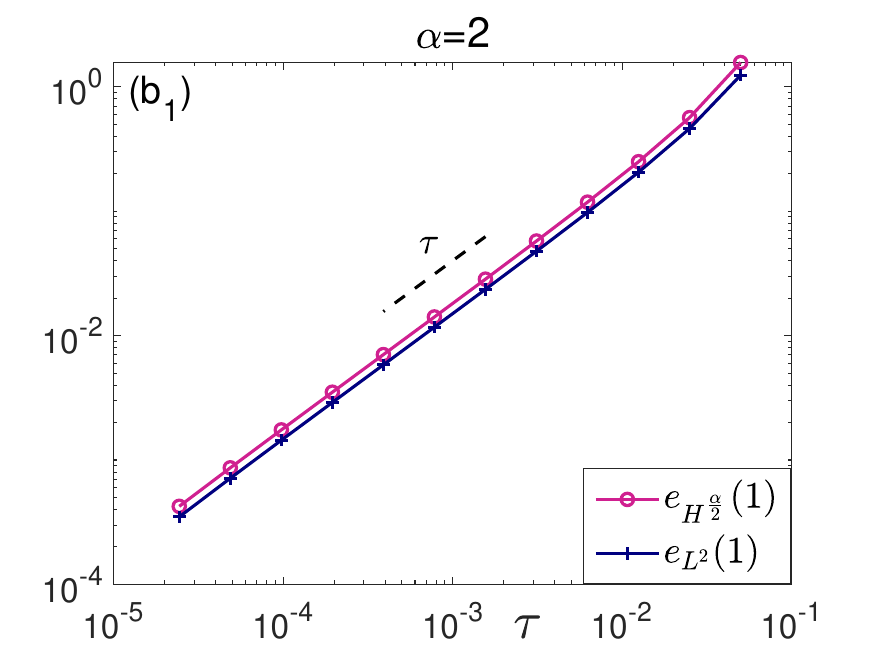}
\includegraphics[scale=.42]{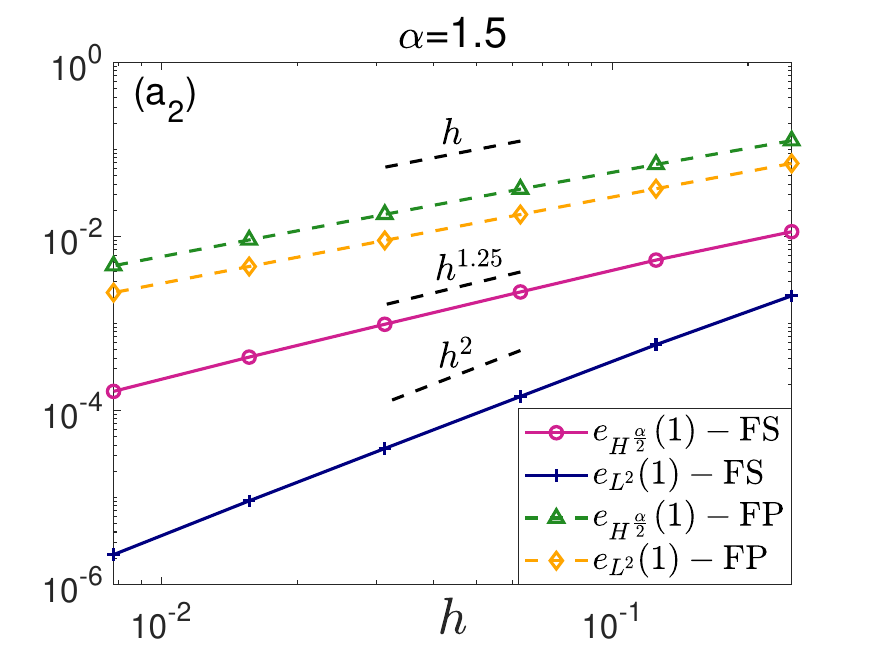}
\includegraphics[scale=.42]{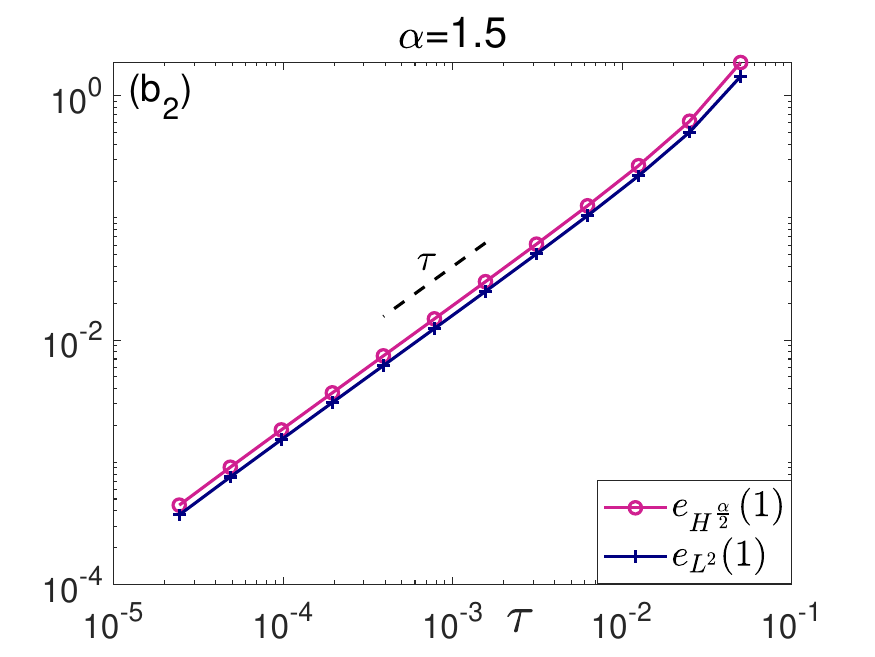}
\includegraphics[scale=.42]{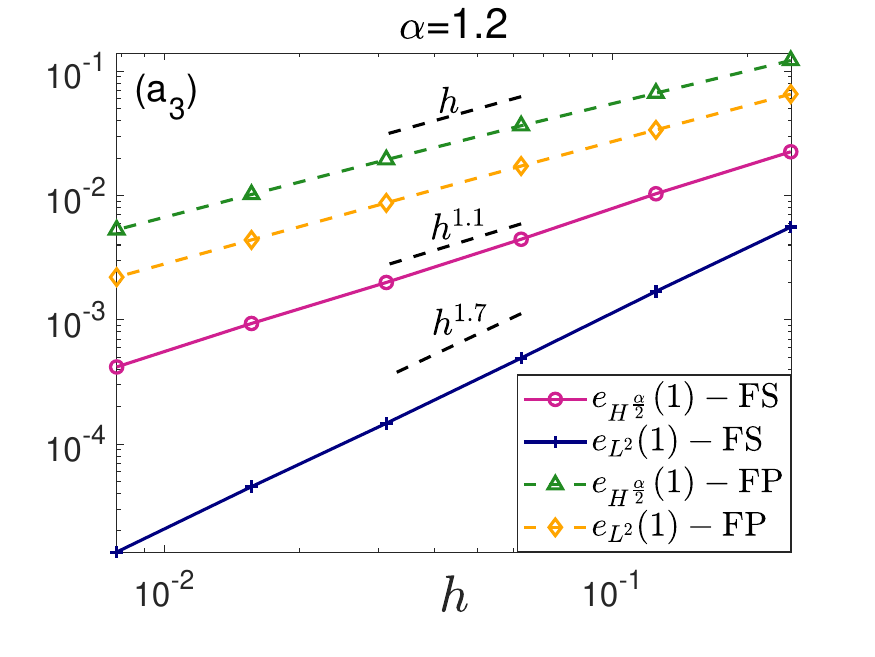}
\includegraphics[scale=.42]{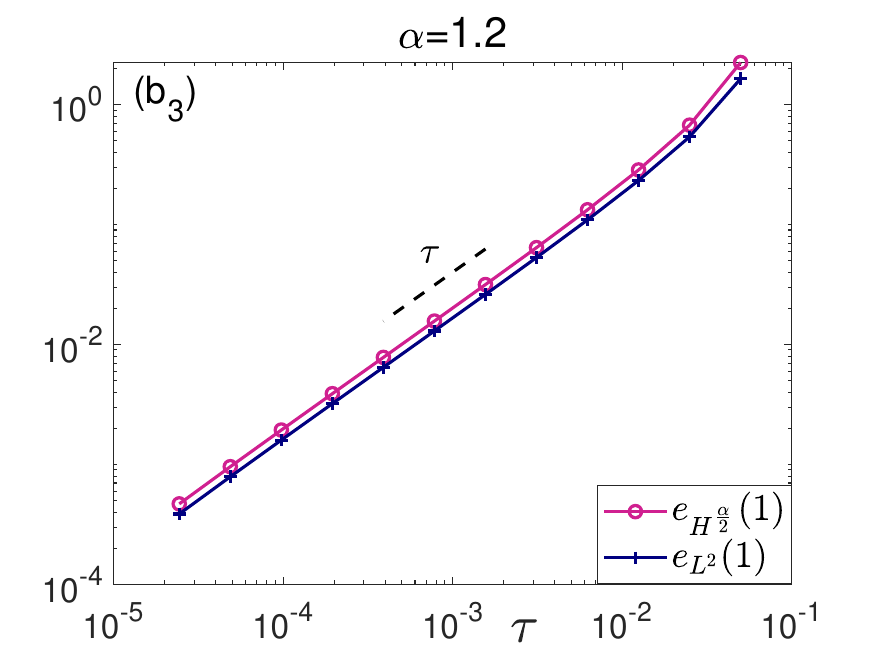}
\caption{Convergence tests of the EWI for \eqref{e5.1n} with low regularity potential $V_1$ and smooth initial condition $\psi_0$ : $(a_{1})-(a_{3})$ spatial errors of the FS and FP discretizations, and $(b_{1})-(b_{3})$ temporal errors in $L^2$-norm and $H^{\frac{\alpha}{2}}$-norm.}
\label{fig1}
\end{figure}

\begin{figure}[ht!]
\centering
\includegraphics[scale=.42]{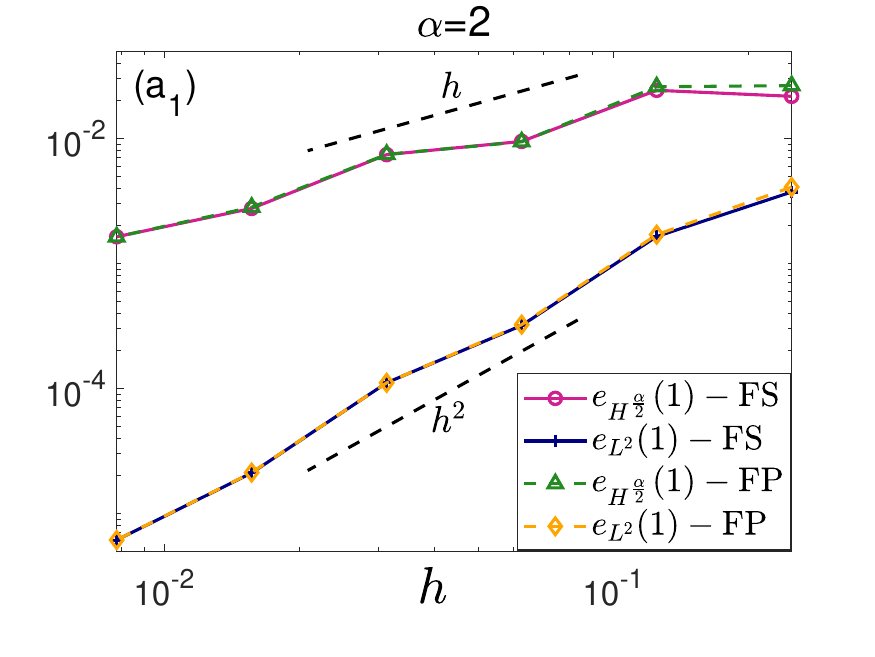}
\includegraphics[scale=.42]{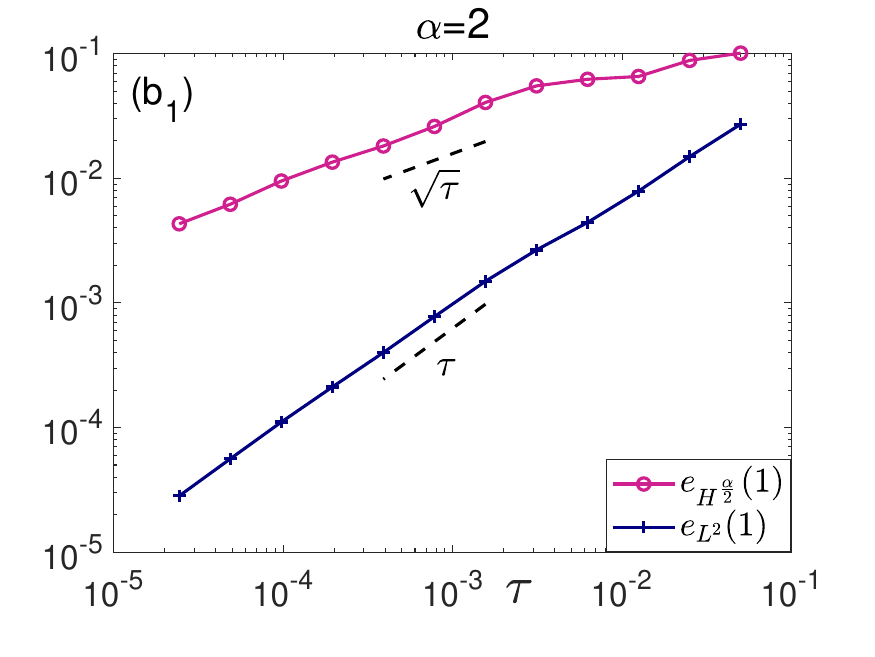}
\includegraphics[scale=.42]{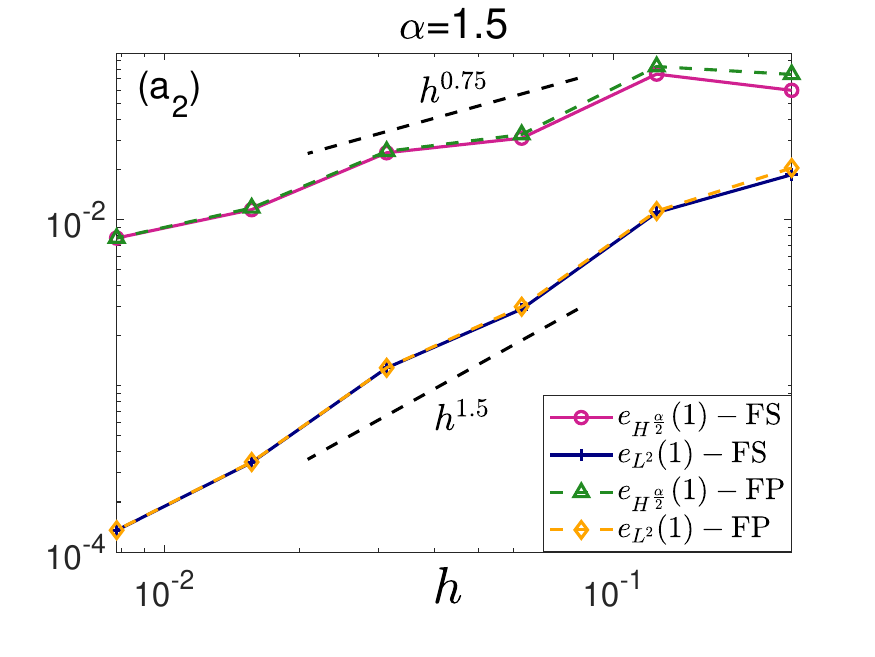}
\includegraphics[scale=.42]{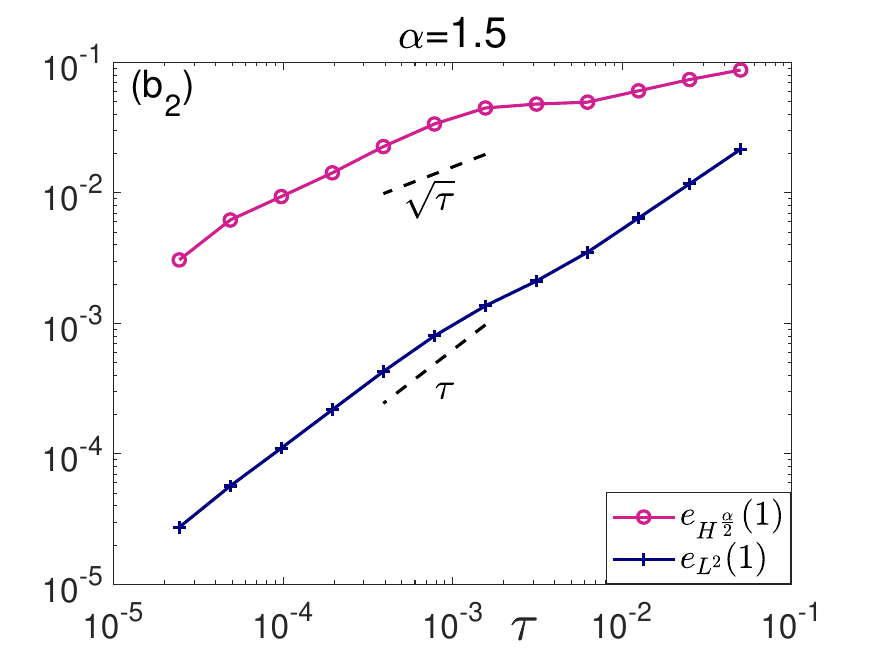}
\includegraphics[scale=.42]{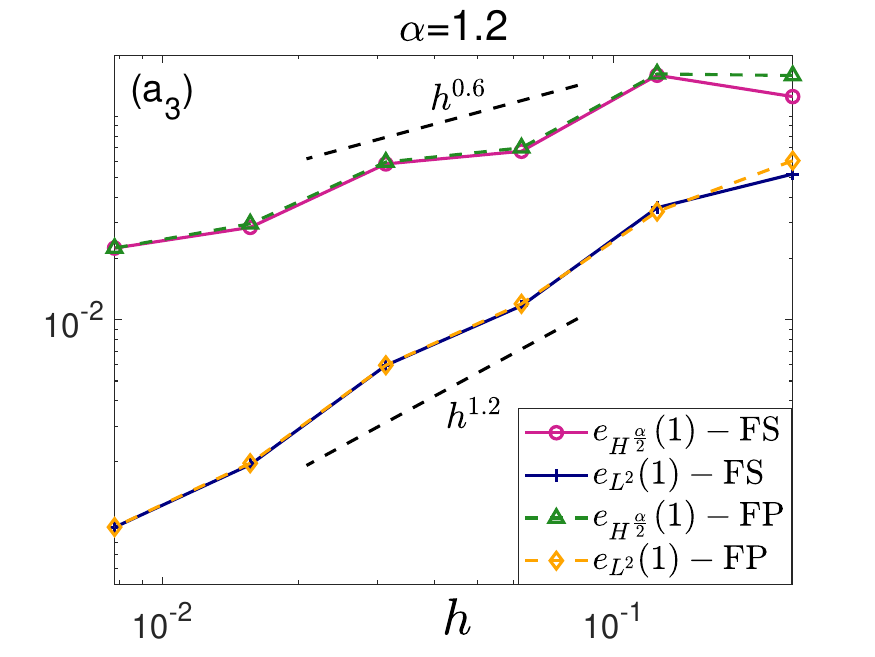}
\includegraphics[scale=.42]{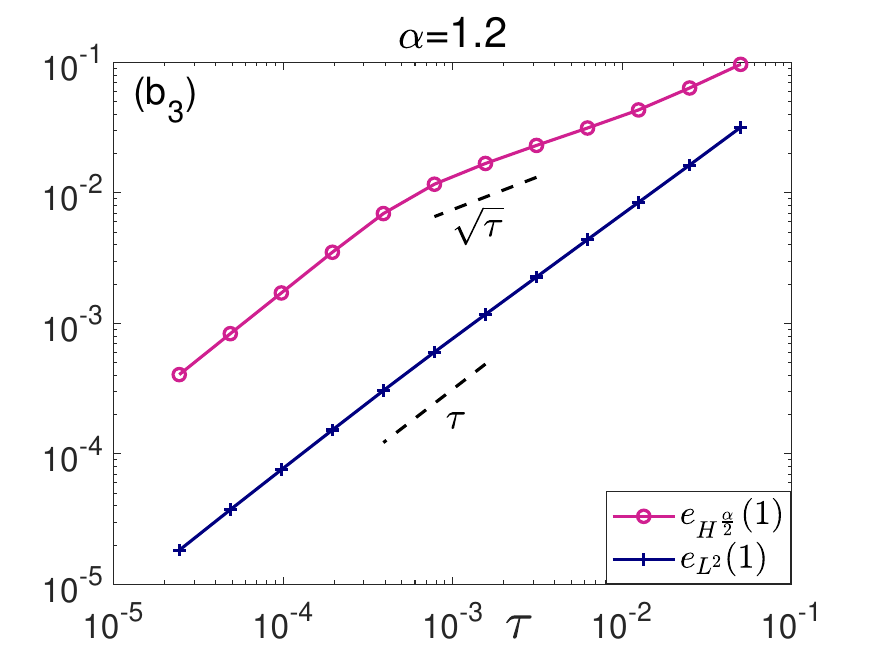}
\caption{Convergence tests of the EWI for \eqref{e5.1n} with low regularity potential $V_2$ and smooth initial condition $\psi_0$ : $(a_{1})-(a_{3})$ spatial errors of the FS and FP discretizations, and $(b_{1})-(b_{3})$ temporal errors in $L^2$-norm and $H^{\frac{\alpha}{2}}$-norm.}
\label{fig2}
\end{figure}
We test the convergence orders for the SFNLSE \eqref{e5.1n} with low regularity potentials $V=V_1$ and $V=V_2$, respectively. The ``exact'' solutions are computed by the EWI-FS method with $\tau=\tau_{\mathrm{e}}:=10^{-6}$ and $h=h_{\mathrm{e}}:=2^{-9}$. When test the spatial errors, we fix the time step size $\tau=\tau_{\mathrm{e}}$, and when test the temporal errors, we fix the mesh size $h=h_{\mathrm{e}}$.

We start with the spatial error and compare the performance of the FS method and the standard FP method.

Figure \ref{fig1} $(a_{1})$-$(a_{3})$ show the spatial error in $L^2$- and $H^{\frac{\alpha}{2}}$-norm of the EWI-FS method (solid lines) and the EWI-FP method (dotted lines) with $V=V_1$ given in \eqref{e5.2n} and smooth initial condition $\psi_0$ given in \eqref{e5.1n}. We can observe that the EWI-FS is $O(h^{2.5})$, $O(h^{2})$, $O(h^{1.7})$ order convergent in $L^2$-norm and $O(h^{1.5})$, $O(h^{1.25})$, $O(h^{1.1})$ order convergent in $H^{\frac{\alpha}{2}}$-norm in space when $\alpha=2, 1.5, 1.2$, repectively. The convergence orders in $L^2$-norm is almost $\frac{\alpha}{2}$ order higher than that of the $H^{\frac{\alpha}{2}}$-norm of the EWI-FS method, this is reasonable. Such convergence orders are not surprising by recalling Theorem \ref{th4.1} since the ``exact'' solution in this case has regularity roughly $H^{\alpha+0.5}$.
However, the spatial convergence order of the EWI-FP method is only first order in both $L^2$- and $H^{\frac{\alpha}{2}}$-norm, and the value of the $H^{\frac{\alpha}{2}}$-error is much larger. This implies that when discretizing low regularity potential, the FS method is much better than the standard FP method. Figure \ref{fig1} $(b_{1})$-$(b_{3})$ plot the temporal convergence of the EWI in $L^2$- and $H^{\frac{\alpha}{2}}$-norm with the Type I low regularity potential. We can observe that the EWI is first order convergent in $L^2$-norm and $H^{\frac{\alpha}{2}}$-norm in time, which is better than the theoretical analysis result. The results in Figure \ref{fig1} validate our optimal $L^2$-norm error bound for the SFNLSE with $L^{\infty}$-potential and demonstrate that it is sharp.

Figure \ref{fig2} $(a_{1})$-$(a_{3})$ show the spatial error in $L^2$- and $H^{\frac{\alpha}{2}}$-norm of the EWI-FS method (solid lines) and the EWI-FP method (dotted lines) with $V=V_2$ given in \eqref{e5.3n} and smooth initial condition $\psi_0$ given in \eqref{e5.1n}. We can observe that the EWI-FS and EWI-FP methods are $O(h^{\alpha})$ order convergent in $L^2$-norm and $O(h^{\frac{\alpha}{2}})$ order convergent in $H^{\frac{\alpha}{2}}$- norm in space. Such convergence orders are also not surprising since the ``exact'' solution in this case has regularity roughly $H^{\alpha}$. However, the spatial error value of the EWI-FP method is slightly larger when $h$ is bigger. But the overall trend shows that the effects of the two methods are similar. Figure \ref{fig2} $(b_{1})$-$(b_{3})$ plot the temporal convergence of the EWI-FS method in $L^2$- and $H^{\frac{\alpha}{2}}$-norm with the initial datum \eqref{e5.1n}. We can observe that the method is $O(\sqrt{\tau})$ order convergent in $H^{\frac{\alpha}{2}}$-norm and $O({\tau})$ order convergent in $L^2$-norm. The results in Figure \ref{fig2} also validate our optimal $H^{\frac{\alpha}{2}}$-norm error bound for the SFNLSE with low regularity potential and demonstrate that it is sharp.

Next, we present some phenomena to demonstrate the difference between the SFNLSE and the classical NLSE with low regularity potential.
\begin{figure}[ht!]
\centering
\includegraphics[scale=.42]{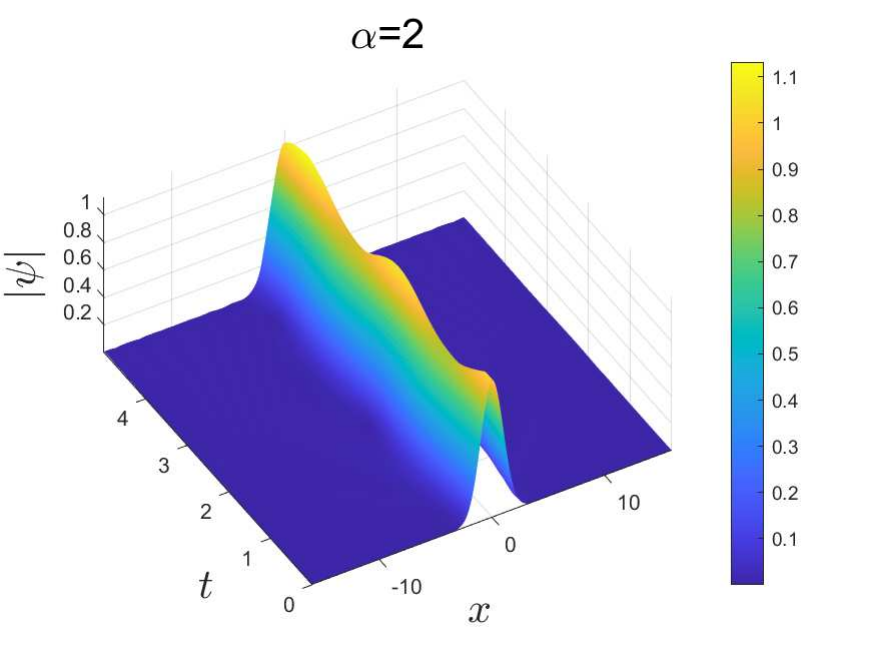}
\includegraphics[scale=.42]{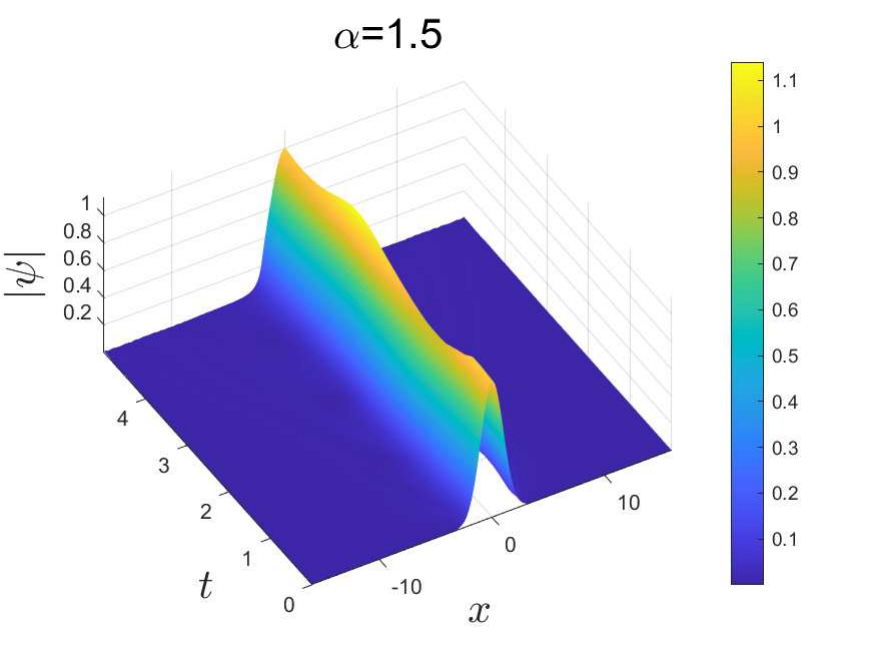}
\includegraphics[scale=.42]{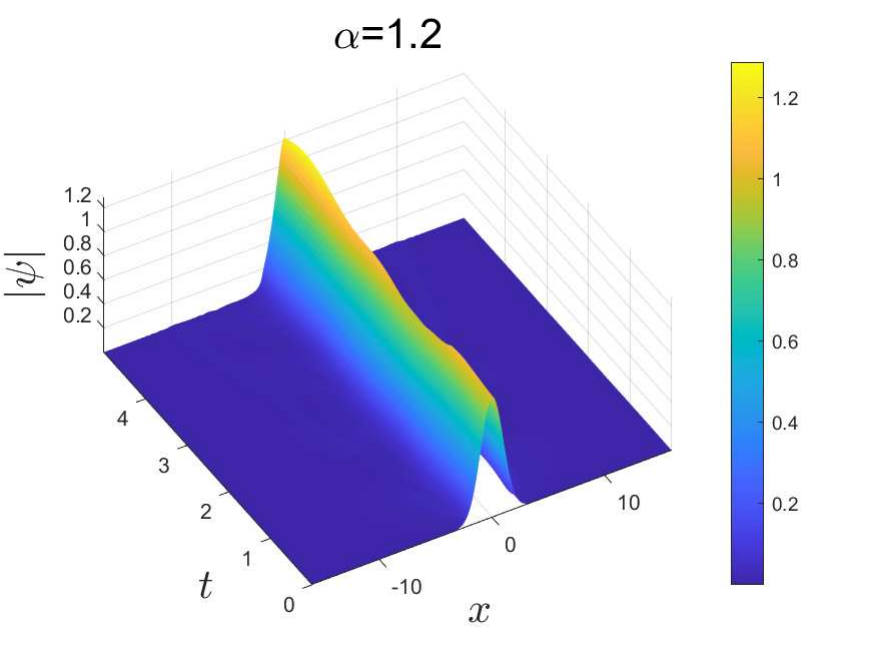}
\caption{Evolutions of $|\psi(x,t)|$ for the SFNLSE \eqref{e5.1n} with low regularity potential $V_1$ and different fractional powers $\alpha=2, 1.5, 1.2$.}
\label{fig3}
\end{figure}

\begin{figure}[ht!]
\centering
\includegraphics[scale=.42]{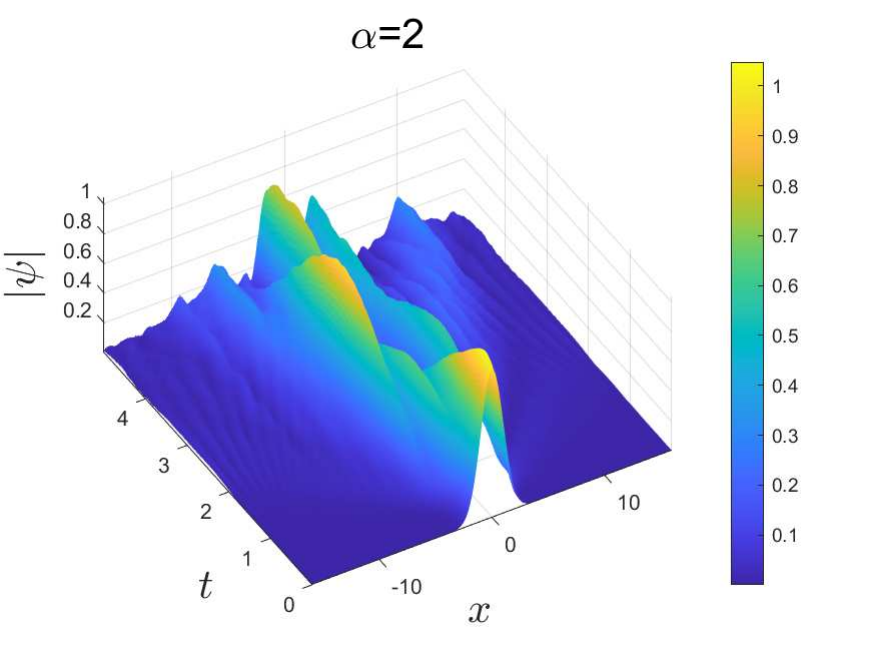}
\includegraphics[scale=.42]{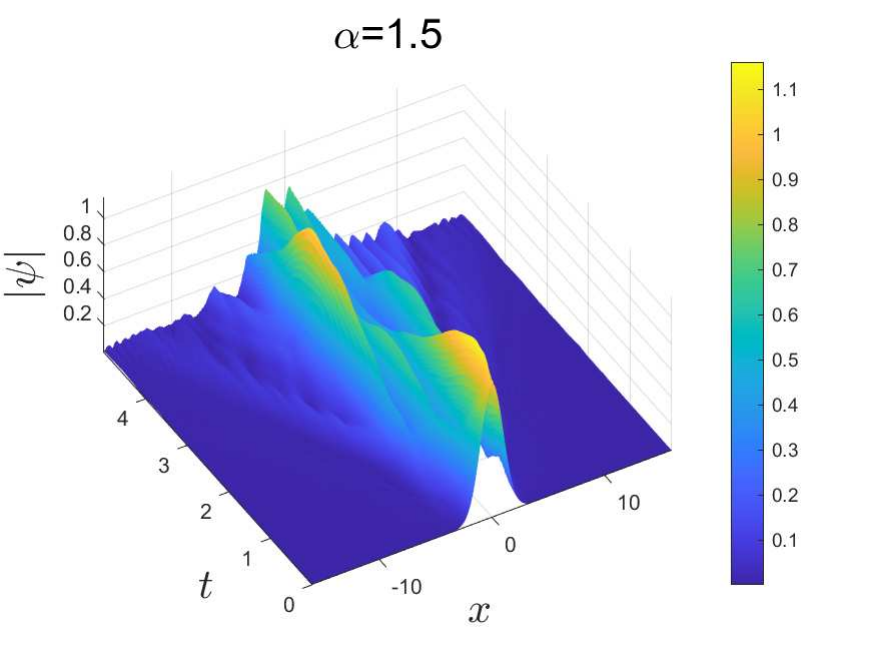}
\includegraphics[scale=.42]{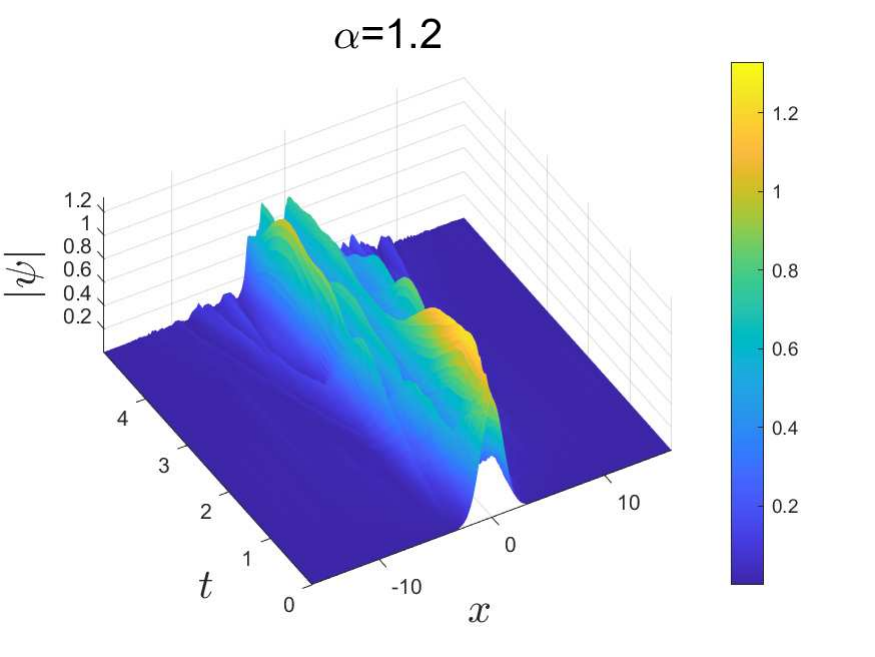}
\caption{Evolutions of $|\psi(x,t)|$ for the SFNLSE \eqref{e5.1n} with low regularity potential $V_2$ and different fractional powers $\alpha=2, 1.5, 1.2$.}
\label{fig4}
\end{figure}

Figures \ref{fig3}-\ref{fig4} show evolutions of $|\psi(x,t)|$ for the SFNLSE with low regularity potentials $V_1$ , $V_2$ and different fractional powers $\alpha=2, 1.5, 1.2$, and we find drastically different evolving patterns between SFNLSE and the classical NLSE. These two figures indicate that when $\alpha$ decreases, the peak height of the numerical solution $|\psi|$ gradually increases. Smaller values of $\alpha$ lead to higher and more concentrated local peaks, while larger values of $\alpha$ result in a more widespread spatial distribution of the solution. With smaller $\alpha$ values, the solution tends to form smooth, localized waves. In both figures, regardless of the different potentials or the value of $\alpha$, the solution evolution patterns are generally similar. They all exhibit a transition from high-frequency, widely distributed oscillations to low-frequency, concentrated distributions. This evolution pattern reflects a consistent influence of $\alpha$ on the overall trend of the solution. And it demonstrates that the long-range interactions and heavy-tailed influence of the fractional diffusion process.

\subsection{The SFNLSE with locally Lipschitz nonlinearity}\label{sub5.2new}
In this subsection, we only consider the SFNLSE with the power-type nonlinearity and without potential:
\begin{equation}\label{e5.1}
i \partial_t \psi(x, t)=(-\Delta)^{\frac{\alpha}{2}} \psi(x, t)-|\psi(x, t)|^{2 \sigma} \psi(x, t), \quad x \in \Omega, \quad t>0,
\end{equation}
where $0<\sigma<1$.  Two types of initial data are considered:\\
(i) Type $\textrm{\uppercase\expandafter{\romannumeral1}}$. The $H^{\alpha}$ initial datum
\begin{equation}\label{e5.2}
\psi_0(x)=x|x|^{\widetilde{\gamma}+0.01}e^{-\frac{x^2}{2}}, \quad x \in \Omega,
\end{equation}
where $\widetilde{\gamma}=\alpha-\frac{3}{2}$.\\
(ii) Type II. The smooth initial datum
\begin{equation}\label{e5.3}
\psi_0(x)=x e^{-\frac{x^2}{2}}, \quad x \in \Omega .
\end{equation}

The two initial data are specifically selected to highlight the impact of the low regularity of $f$ near the origin. The distinction between these two initial data lies in their regularity.

We will test the convergence order in both time and space for Type $\textrm{\uppercase\expandafter{\romannumeral1}}$ and $\textrm{\uppercase\expandafter{\romannumeral2}}$ initial data. For each initial datum, we choose $\sigma=0.1,0.2,0.3,0.4$ or $\sigma=0.1,0.2,0.5,0.6$. The ``exact'' solutions are computed by the Strang splitting FP method with $\tau=\tau_{\mathrm{e}}:=10^{-6}$ and $h=h_{\mathrm{e}}:=2^{-9}$. When test the spatial errors, we fix the time step size $\tau=\tau_{\mathrm{e}}$, and when test the temporal errors, we fix the mesh size $h=h_{\mathrm{e}}$.

\begin{figure}[ht!]
\centering
\includegraphics[scale=.42]{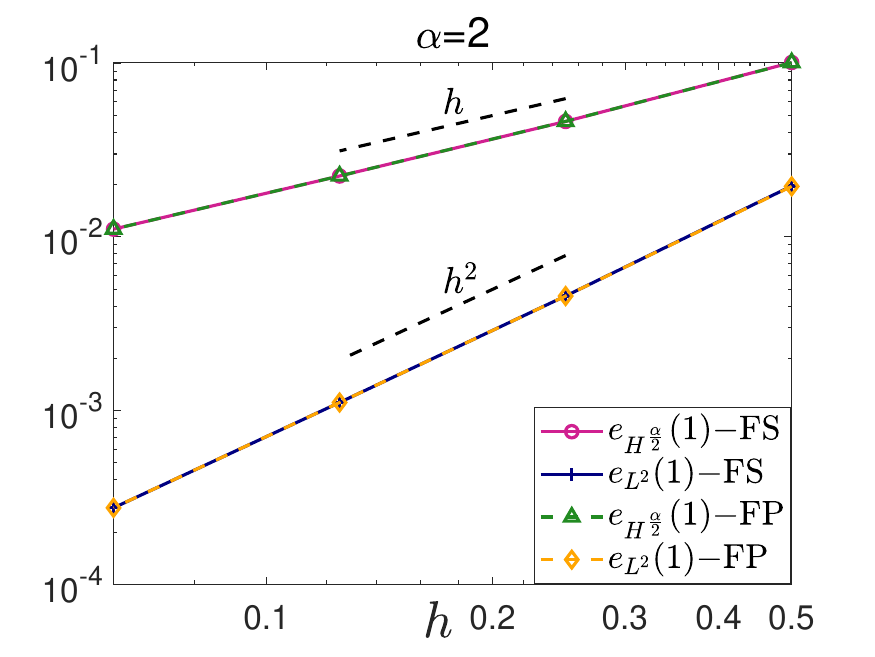}
\includegraphics[scale=.42]{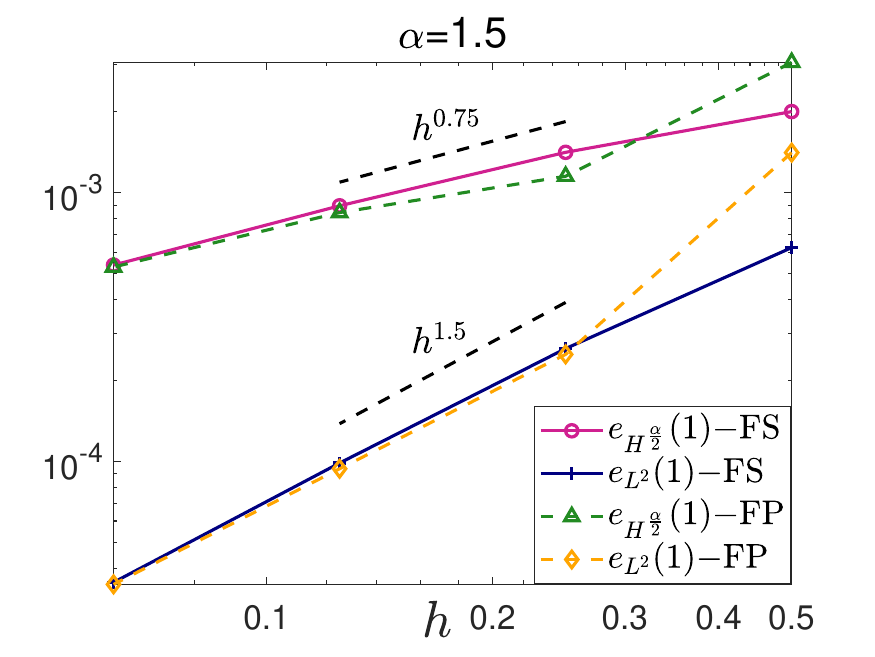}
\includegraphics[scale=.42]{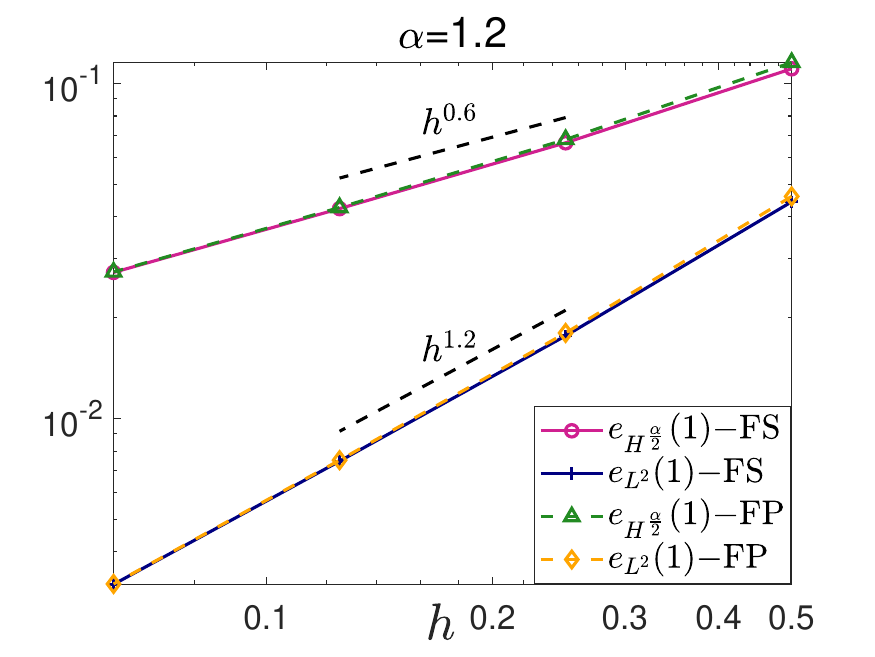}
\caption{Comparison of the FS and FP discretization of the nonlinear term in \eqref{e5.1} with $\sigma=0.1$ and Type I initial datum \eqref{e5.2}.}
\label{fig11}
\end{figure}

We start with the Type I $H^{\alpha}$ initial datum \eqref{e5.2}. Figure \ref{fig11} exhibits the spatial error in $L^2$- and $H^{\frac{\alpha}{2}}$-norm of the EWI-FS (solid lines) and the EWI-FP (dotted lines) method for $\sigma=0.1$. We can observe that the EWI-FS method is $O(h^\alpha)$ order convergent in $L^2$-norm and $O(h^{\frac{\alpha}{2}})$ order convergent in $H^{\frac{\alpha}{2}}$-norm. Moreover, we see that there is almost no difference between the spatial error of the EWI-FS method and the EWI-FP method as the spatial step size
$h$ decreases, which suggests that the FP method seems also suitable to discretize the low regularity nonlinearity.

\begin{figure}[ht!]
\centering
\includegraphics[scale=.42]{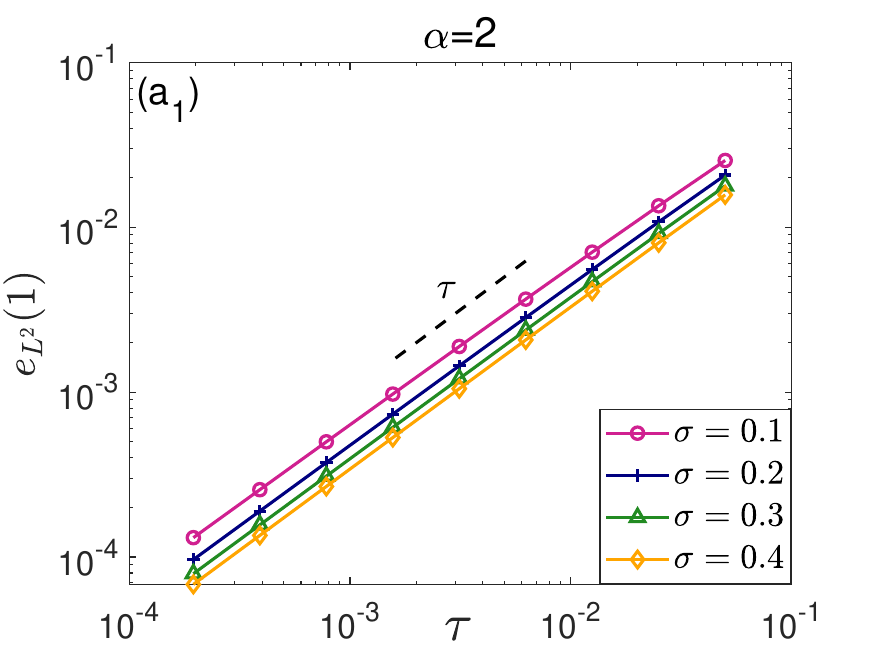}
\includegraphics[scale=.42]{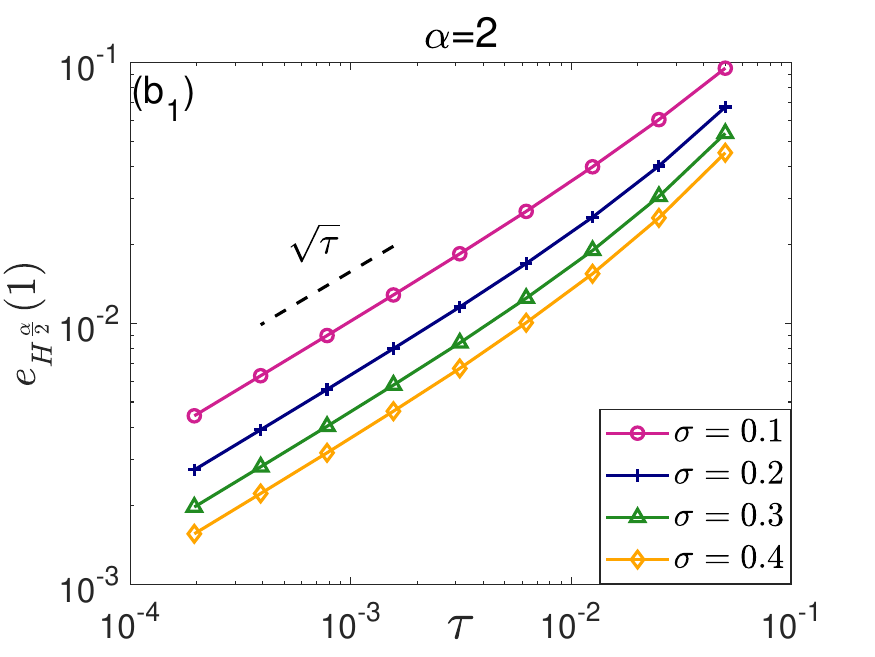}
\includegraphics[scale=.42]{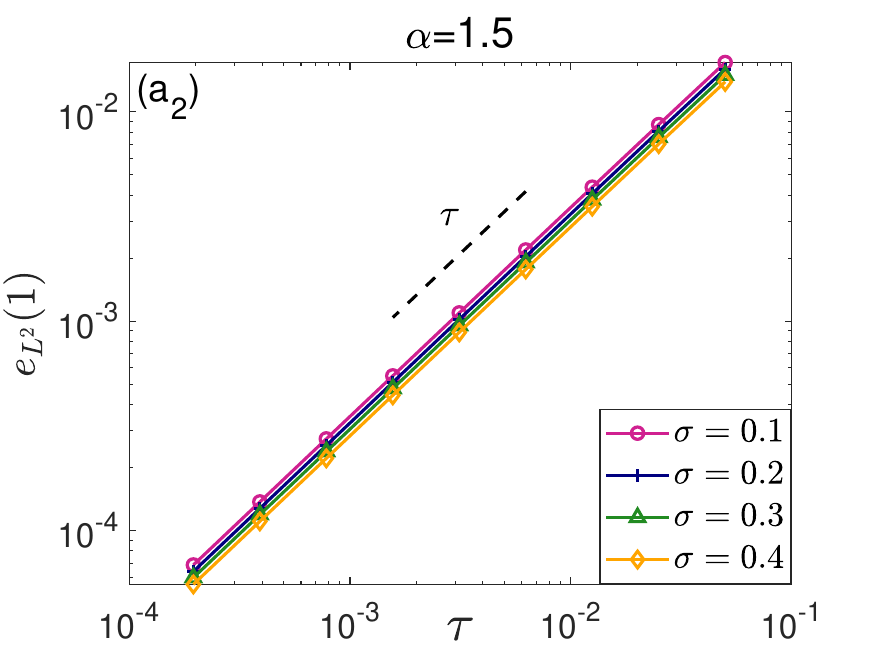}
\includegraphics[scale=.42]{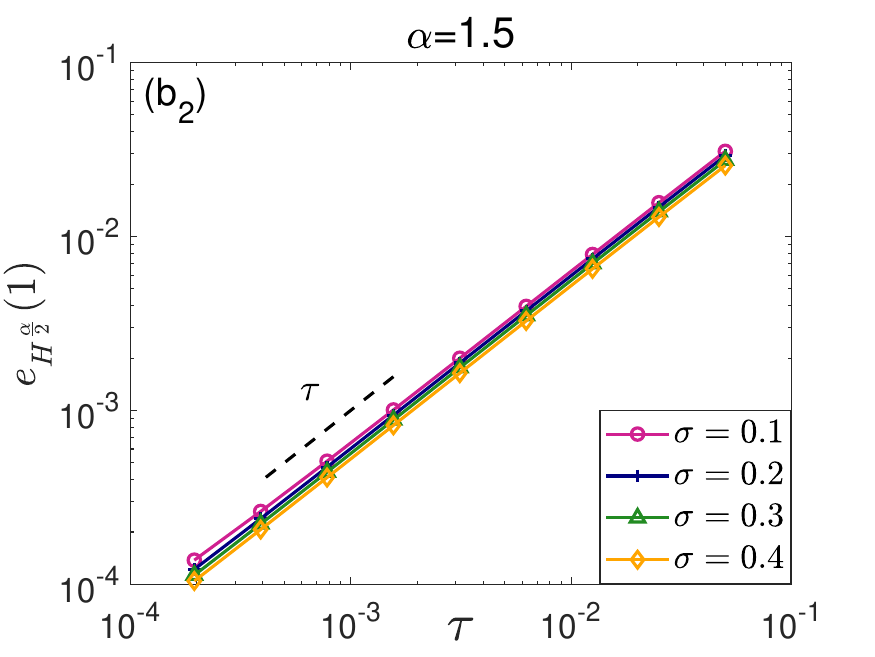}
\includegraphics[scale=.42]{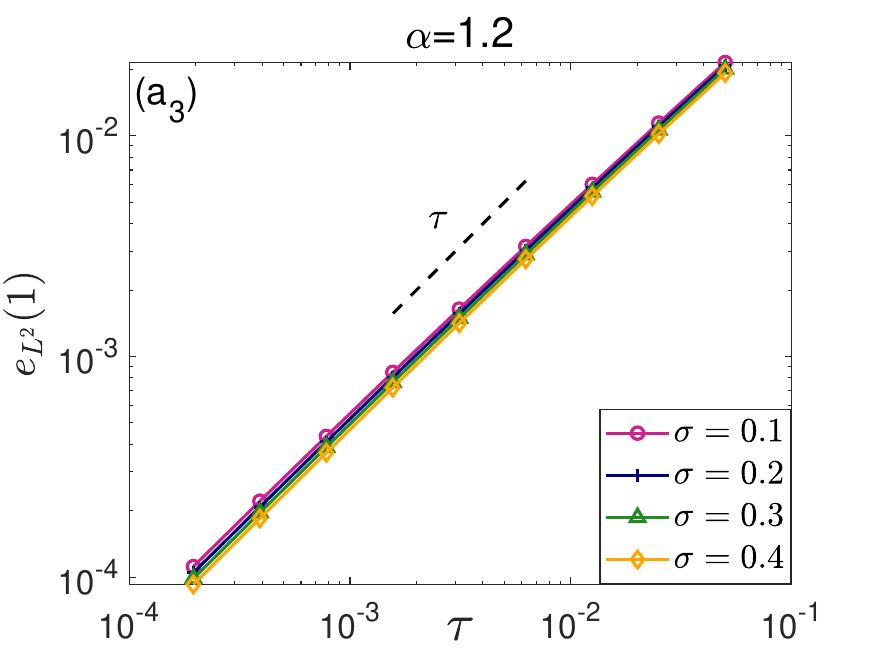}
\includegraphics[scale=.42]{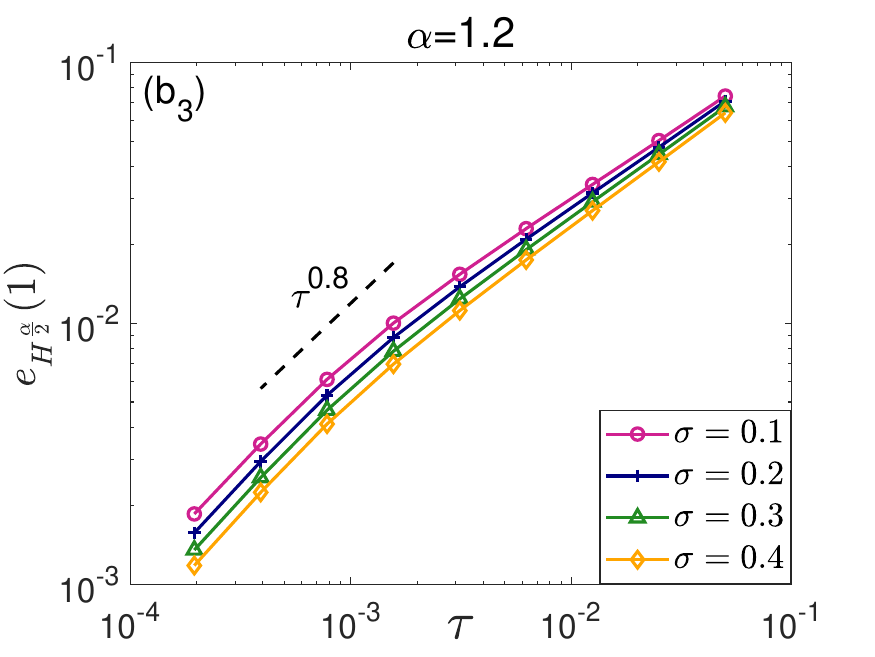}
\caption{Temporal errors of the EWI for the SFNLSE \eqref{e5.1} with Type I initial datum \eqref{e5.2}: $(a_{1})-(a_{3})$ $L^2$-norm errors, and $(b_{1})-(b_{3})$ $H^{\frac{\alpha}{2}}$-norm errors.}
\label{fig22}
\end{figure}

Figure \ref{fig22} plots the temporal error in $L^2$ - and $H^{\frac{\alpha}{2}}$-norm of the EWI for different $0<\sigma<1/2$ with Type I initial datum \eqref{e5.2}. Figure \ref{fig22} $(a_{1})$-$(a_{3})$ show that the temporal convergence order is $O(\tau)$ in $L^2$-norm for $\alpha=2, 1.5, 1.2$. Figure \ref{fig22} $(b_{1})$-$(b_{3})$ show the temporal convergence order is $O(\sqrt{\tau}), O({\tau}), O({\tau^{0.8}})$ in $H^{\frac{\alpha}{2}}$-norm for $\alpha=2, 1.5, 1.2$.
The results in Figures \ref{fig11} and \ref{fig22} confirm our optimal $L^2$-norm error bound for the SFNLSE with locally Lipschitz nonlinearity, and demonstrate that it is sharp.

Then we consider the Type II smooth initial datum \eqref{e5.3}. Figure \ref{fig33} shows the spatial error in $L^2$- and $H^{\frac{\alpha}{2}}$-norm of the EWI-FS (solid lines) and the EWI-FP (dotted lines) method for $\sigma=0.1$ when $\alpha=2, 1.5, 1.2$. We can observe that the convergence orders in $L^2$- and $H^{\frac{\alpha}{2}}$-norm of the EWI-FS and the EWI-FP are almost the same, though the value of the error of the EWI-FS is smaller than the EWI-FP. The results show that when the solution has better regularity, the FS method outperforms the FP method for discretizing the low regularity nonlinearity.

\begin{figure}[ht!]
\centering
\includegraphics[scale=.42]{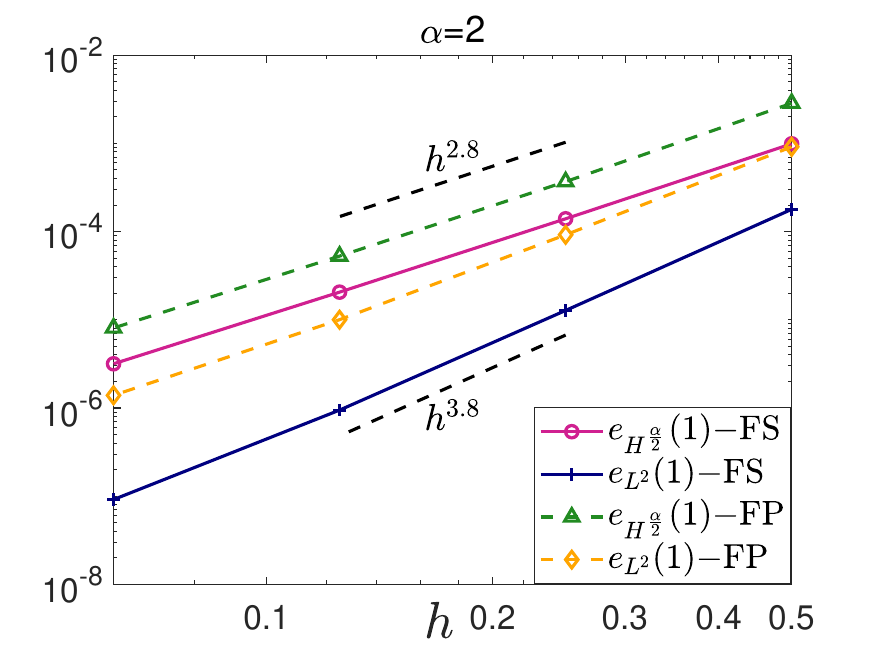}
\includegraphics[scale=.42]{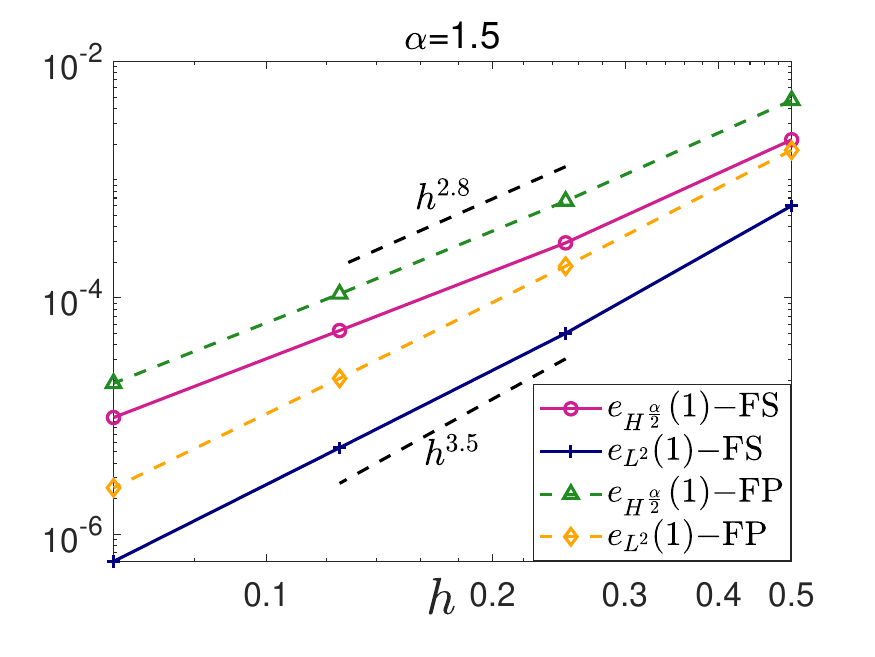}
\includegraphics[scale=.42]{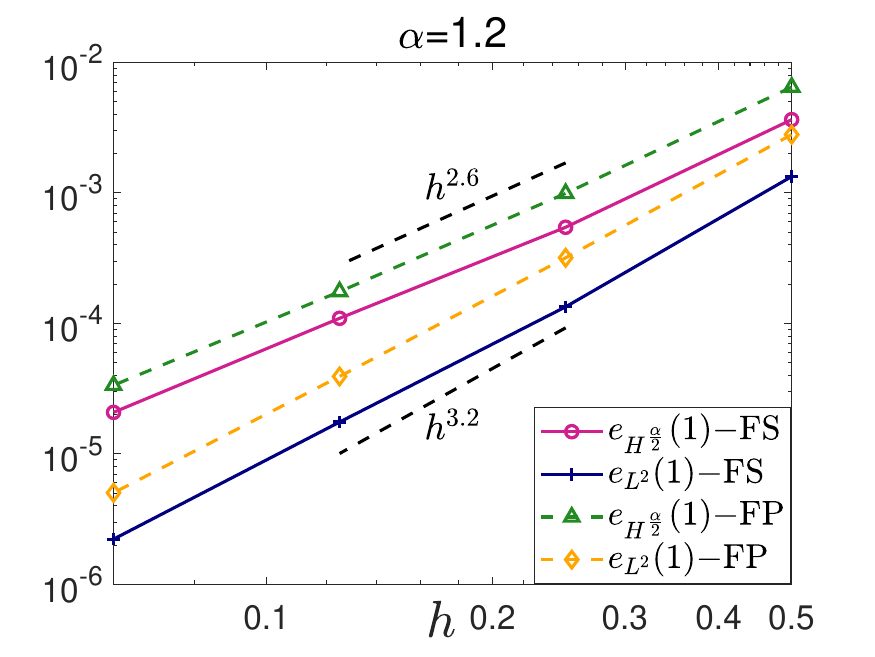}
\caption{Comparison of the FS and FP discretization of the nonlinear term in \eqref{e5.1} with $\sigma=0.1$ and Type II initial datum \eqref{e5.3}.}
\label{fig33}
\end{figure}

\begin{figure}[ht!]
\centering
\includegraphics[scale=.42]{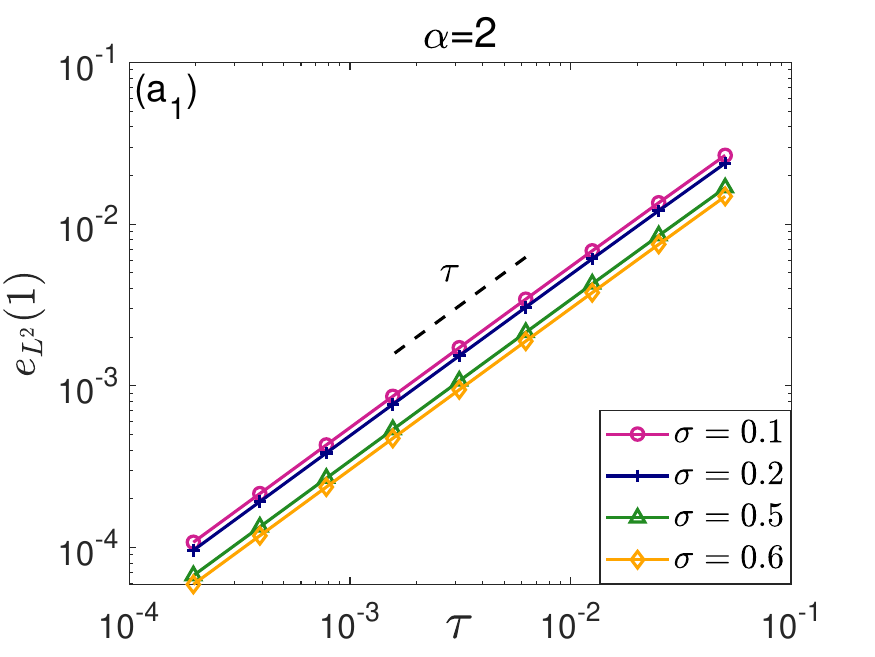}
\includegraphics[scale=.42]{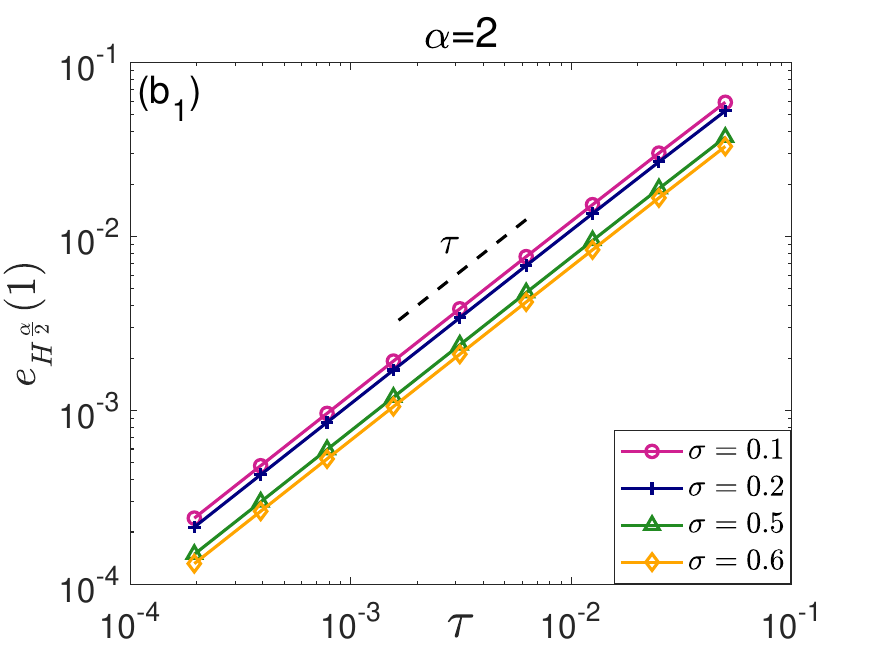}
\includegraphics[scale=.42]{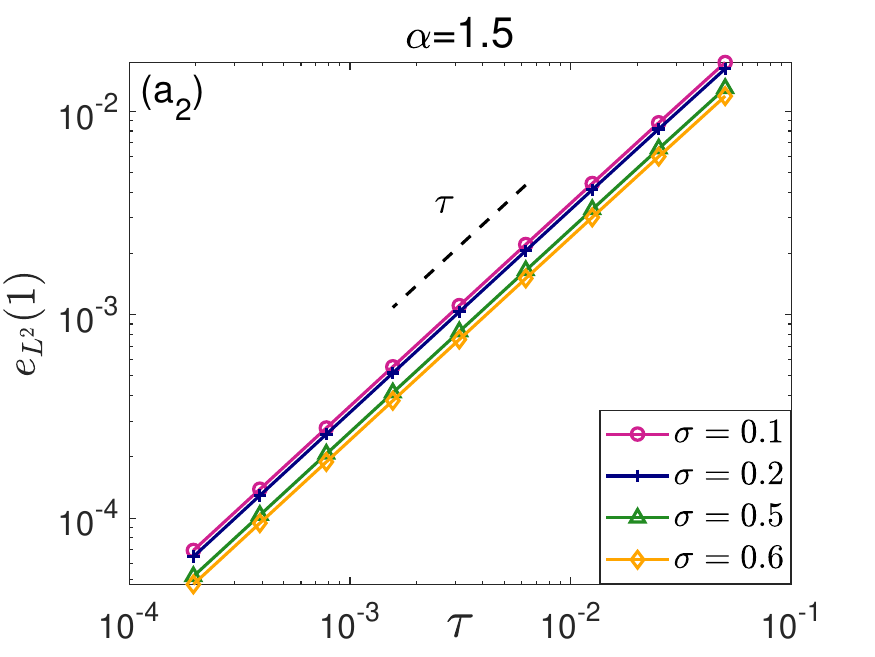}
\includegraphics[scale=.42]{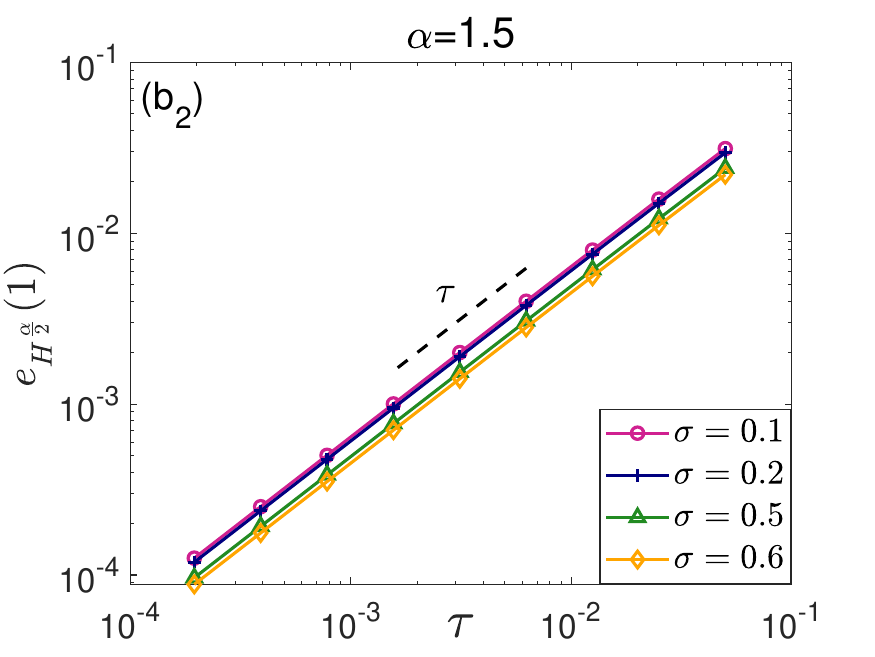}
\includegraphics[scale=.42]{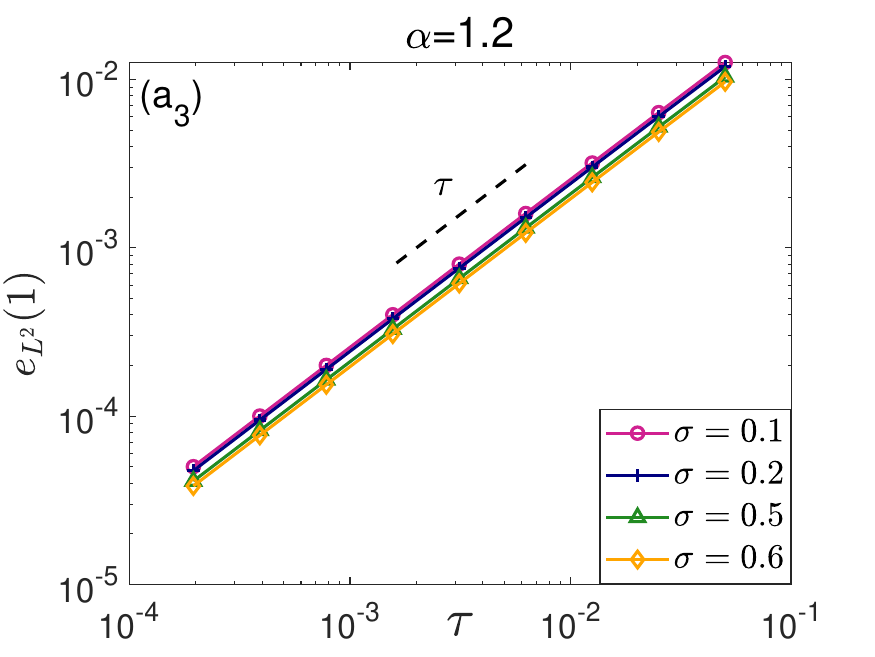}
\includegraphics[scale=.42]{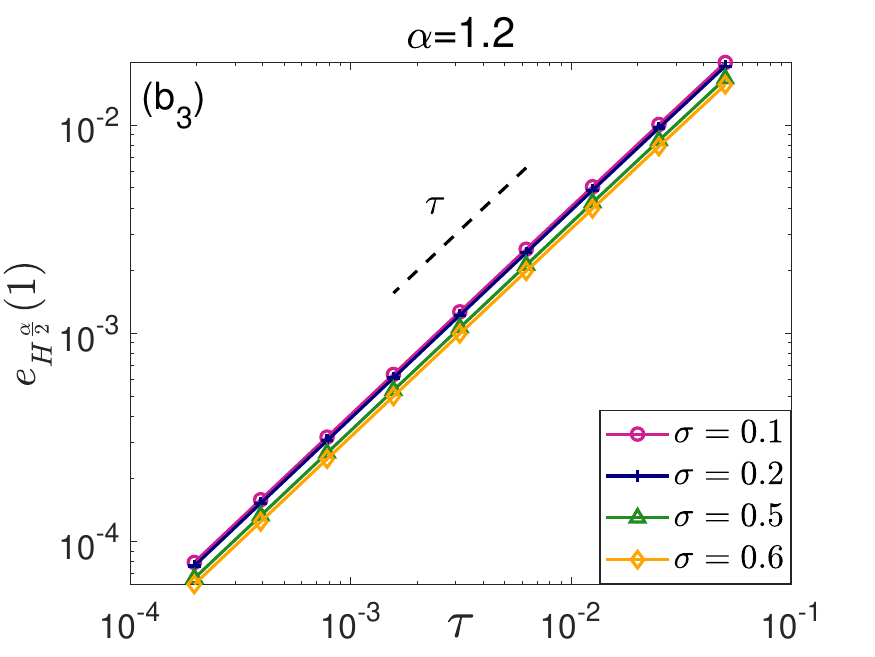}
\caption{Temporal errors of the EWI for the SFNLSE \eqref{e5.1} with Type II initial datum \eqref{e5.3}: $(a_{1})-(a_{3})$ $L^2$-norm errors, and $(b_{1})-(b_{3})$ $H^{\frac{\alpha}{2}}$-norm errors.}
\label{fig44}
\end{figure}

Figure \ref{fig44} displays the temporal error in $L^2$- and $H^{\frac{\alpha}{2}}$-norm of the EWI for different $0<\sigma<1$ with the Type II initial datum. Figure \ref{fig44} $(a_{1})$-$(a_{3})$ and $(b_{1})$-$(b_{3})$ show that the temporal convergence is $O(\tau)$  in both $L^2$ - and $H^{\frac{\alpha}{2}}$-norm for $\sigma=0.1, 0.2, 0.5, 0.6$ when $\alpha=2, 1.5, 1.2$.

The results in Figures \ref{fig33} and \ref{fig44} confirm our optimal $H^{\frac{\alpha}{2}}$-norm error bound for the SFNLSE with low regularity nonlinearity, but also indicates the condition $\sigma > \frac{\alpha}{4}$ in Lemma \ref{assum2} may be relaxed.

Next, we present some phenomena to demonstrate the difference between the SFNLSE and the classical NLSE with low regularity nonlinearity.
\begin{figure}[ht!]
\centering
\includegraphics[scale=.42]{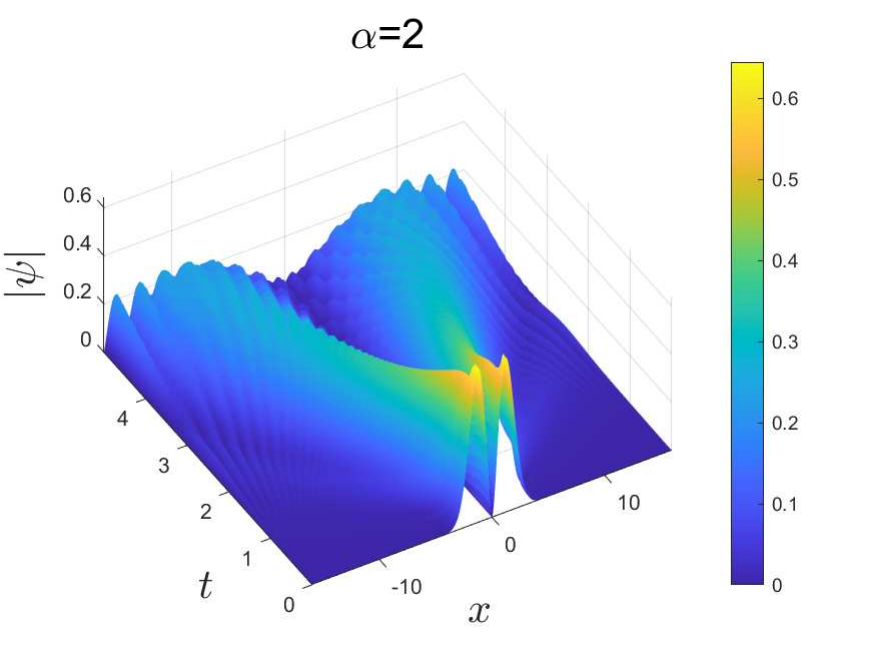}
\includegraphics[scale=.42]{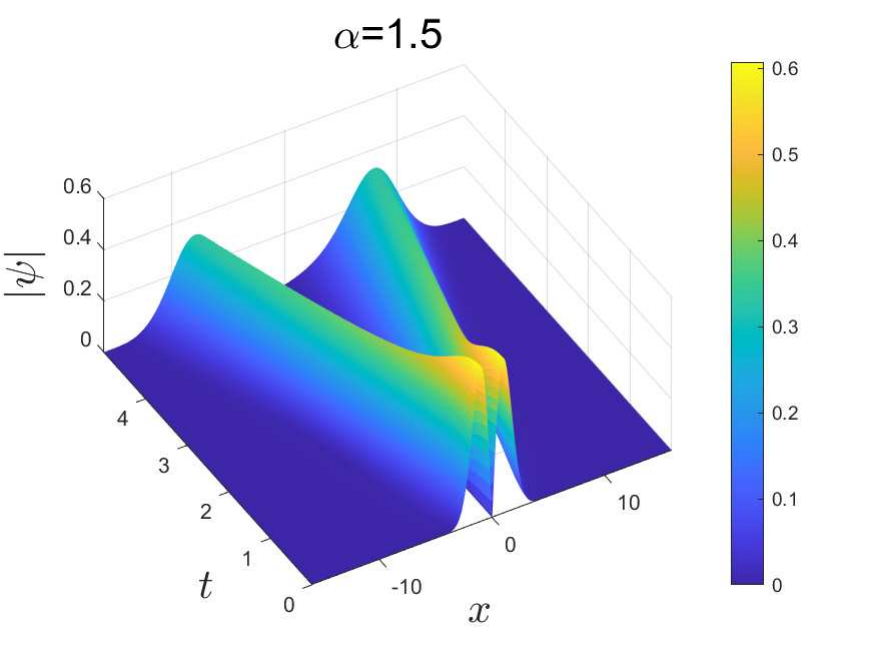}
\includegraphics[scale=.42]{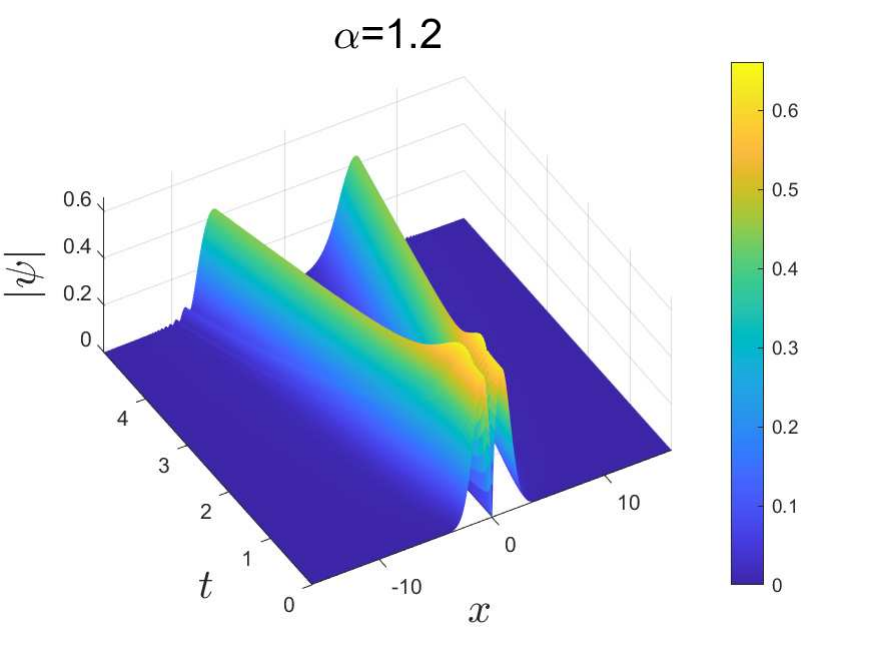}
\caption{Evolutions of $|\psi(x,t)|$ for the SFNLSE \eqref{e5.1} with Type I initial datum \eqref{e5.2} and different fractional powers $\alpha=2, 1.5, 1.2$.}
\label{fig55}
\end{figure}

\begin{figure}[ht!]
\centering
\includegraphics[scale=.42]{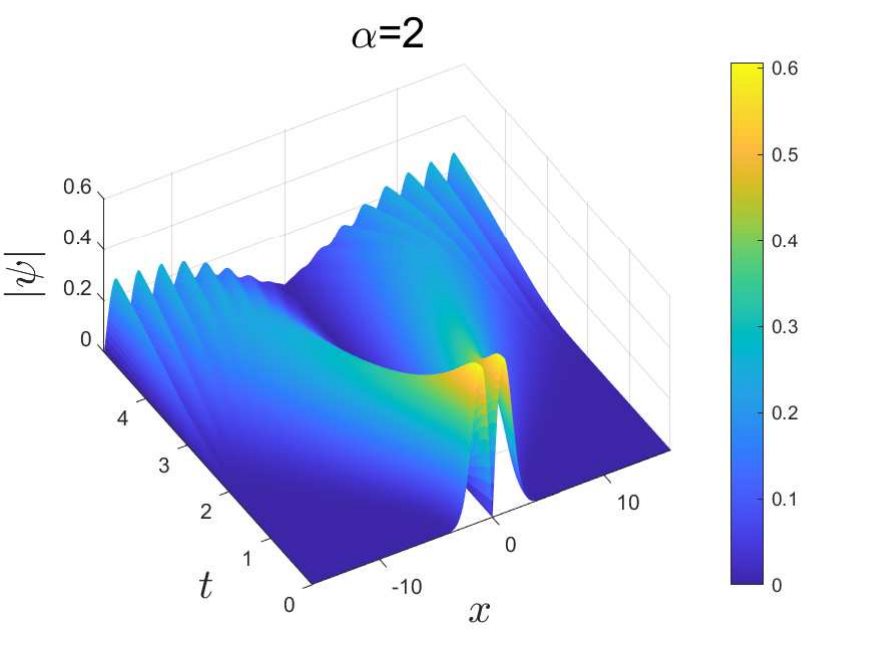}
\includegraphics[scale=.42]{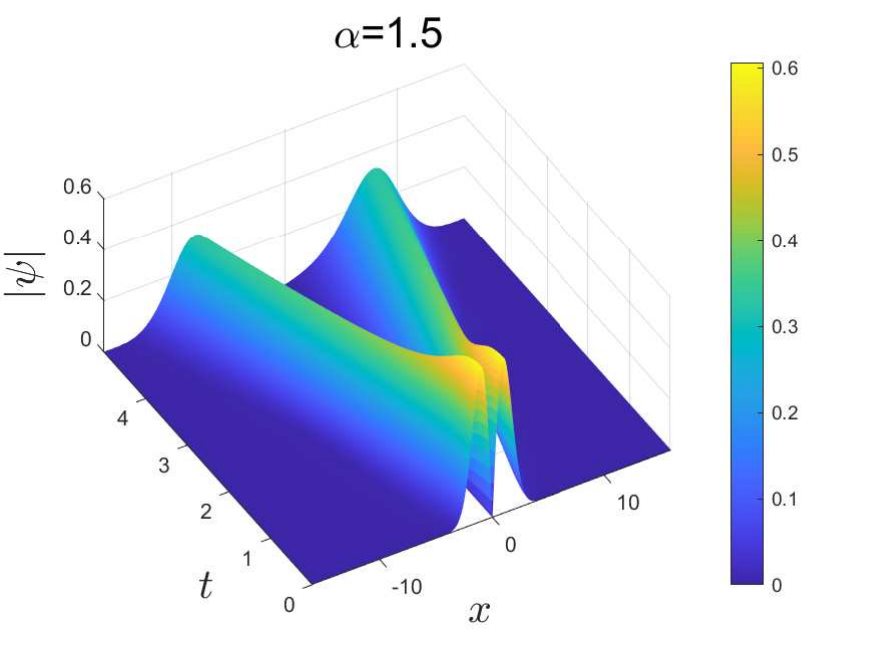}
\includegraphics[scale=.42]{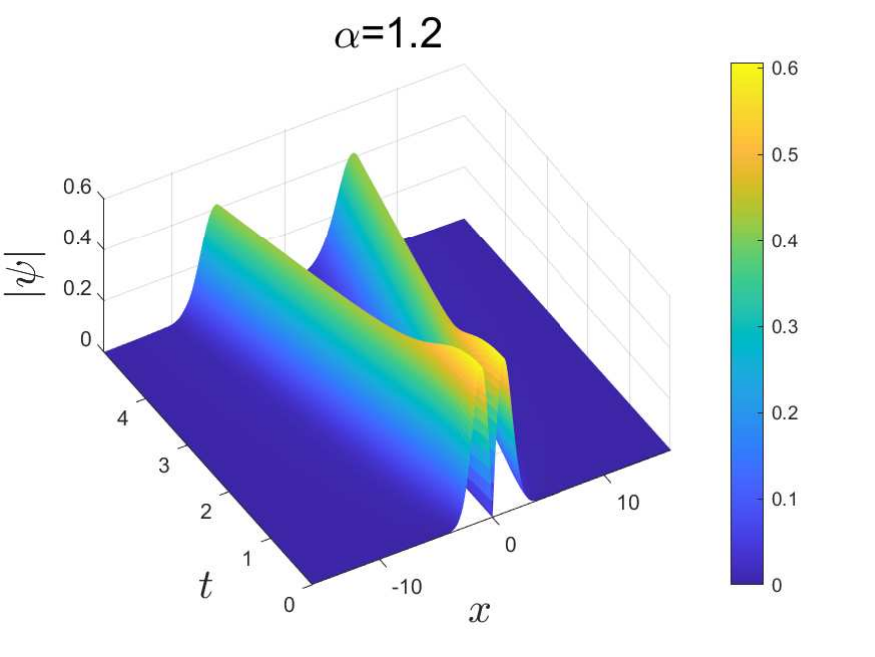}
\caption{Evolutions of $|\psi(x,t)|$ for the SFNLSE \eqref{e5.1} with Type II initial datum \eqref{e5.3} and different fractional powers $\alpha=2, 1.5, 1.2$.}
\label{fig66}
\end{figure}
Figures \ref{fig55}-\ref{fig66} show evolutions of $|\psi(x,t)|$ for the SFNLSE with Type I, II initial datum and different fractional powers $\alpha=2, 1.5, 1.2$. Similar to Figures \ref{fig3}-\ref{fig4} for $\sigma=0.1$, as $\alpha$ decreases, the peak value of the numerical solution $|\psi|$ increases, the oscillations become smoother and more concentrated, and the spatial distribution of the solution exhibits enhanced localization. This indicates that the value of
$\alpha$ has a consistent effect on the local concentration and smoothness of oscillations in the solution across different scenarios.

\section{Conclusions}
This paper focused on the error estimates on EWI-FS numerical method for the SFNLSE with low regularity potential and/or nonlinearity. We established optimal error bounds on the EWI followed by the FS method for spatial discretization. The first-order Gautschi-type EWI method was used to discretize the temporal direction. And the FS method was imposed to discretize the spatial direction. In practical computations, we use FSwQ method and eFP method for spatial discretization of the EWI to compute the low regularity nonlinearity and potential, respectively. And the computational cost of the method is similar to the standard FP discretization of the EWI. For the semi-discretization in time, we rigorously proved an optimal first-order $L^2$-norm error bound and a uniform $H^\alpha$-norm bound of the semi-discrete numerical solution. For the full discretization, optimal spatial error bounds of $O(h^{m})$ in $L^2$-norm and $O(h^{m-\frac{\alpha}{2}})$ in $H^\frac{\alpha}{2}$-norm were obtained without any CFL condition, and these bounds were optimal with respect to the regularity of the exact solution. Additionally, with slightly stronger regularity assumptions, optimal error bounds at $O(\tau)$ in semi-discretization and $O(\tau+h^{m})$ in full discretization were achieved in $H^\frac{\alpha}{2}$-norm. Extensive numerical examples were conducted to verify the optimal error estimates and confirm their sharpness. The results also revealed significantly distinct evolution patterns between the SFNLSE and the classical NLSE. Additionally, the fractional power $\alpha$ was shown to influence the peak value, oscillatory smoothness, and spatial localization of the solution, illustrating the long-range interactions and heavy-tailed influence of the fractional diffusion process.

\section*{Acknowledgements}
The authors would like to specially thank Professor Weizhu Bao and Dr. Chushan Wang in National University of Singapore for their valuable suggestions and comments. This work has been supported by the National Natural Science Foundation of China (Grants Nos. 12120101001, 12301516), Natural Science Foundation of Shandong Province (Grants Nos. ZR2021ZD03, ZR2024QA159), China Postdoctoral Science Foundation (Grants Nos. GZC20231474, 2024M751788).

\section*{Appendix}\label{appendix}
In the following, we explain the generalization of the current results to 2D and 3D discussed in Remark \ref{rem4.4}.

From \eqref{eq3.8}, using the fractional Leibniz rule (Proposition 1) in \citep{FLeibniz2024}, and the Sobolev embedding (Proposition 1.1) in \citep{Sobolev2013}, as a counterpart of \eqref{eq:gamma-derivative}, we have

\begin{align}
	\|\gamma'(\theta)\|_{H^{\frac{\alpha}{2}}} \leq & \|J^{\frac{\alpha}{2}} f_1(z^\theta)\|_{L^{\frac{2 d}{\alpha}}}\|v-w\|_{L^{\frac{2d}{d-\alpha}}}+\|f_1(z^\theta)\|_{L^{\infty}}\|J^{\frac{\alpha}{2}}(v-w)\|_{L^2} \notag \\
	& + \|J^{\frac{\alpha}{2}} f_2(z^\theta)\|_{L^{\frac{2 d}{\alpha}}}\|\overline{(v-w)}\|_{L^{\frac{2d}{d-\alpha}}}+\|f_2(z^\theta)\|_{L^{\infty}}\|J^{\frac{\alpha}{2}}\overline{(v-w)}\|_{L^2} \notag\\
	\lesssim & (\|J^{\frac{\alpha}{2}} f_1(z^\theta)\|_{L^{\frac{2 d}{\alpha}}}+\|f_1(z^\theta)\|_{L^{\infty}})\|v-w\|_{H^{\frac{\alpha}{2}}},
\end{align}
where $J^{s}=(1-\Delta)^{\frac{s}{2}}, s>0$, by the definition of the $H^{s}$-norm \eqref{eq2.1.3}, we know that $\|J^{s}f\|_{L^2}=\|f\|_{H^{s}}$.

To estimate $\|J^{\frac{\alpha}{2}} f_1(z^\theta)\|_{L^{\frac{2 d}{\alpha}}}$, by the fractional Sobolev embedding again, we have
\begin{equation}
	\|J^{\frac{\alpha}{2}} f_1(z^\theta)\|_{L^{\frac{2 d}{\alpha}}} \lesssim\|J^{\frac{\alpha}{2}} f_1(z^\theta)\|_{H \frac{d-\alpha}{2}} \lesssim \|f_1(z^\theta)\|_{H^{\frac{d}{2}}} .
\end{equation}

For 2D, $\|f_1(z^\theta)\|_{H^{\frac{d}{2}}}=\|f_1(z^\theta)\|_{H^1}$ can be bounded when $\sigma\geq 1/2$.

For 3D, we need $\sigma>3/4$ to estimate $\|f_1(z^\theta)\|_{H^{\frac{3}{2}}}$ by following a similar procedure in \eqref{eq3.9}.
$$f_{1}(v)(\textbf{x})=|v(\textbf{x})|^{2\sigma},$$
where $v=z^\theta$, then
$$\nabla f_{1}(v)(\textbf{x})=(\sigma|v|^{2\sigma-2}v)\nabla\overline{v}(\textbf{x})+(\sigma|v|^{2\sigma-2}\overline{v})\nabla{v}(\textbf{x}),$$
and $\|f_1(v)\|_{H^{\frac{3}{2}}}=\|\nabla f_1(v)\|_{H^{\frac{1}{2}}}$, $\sigma|v|^{2\sigma-2}v$ is $(2\sigma-1)-$H\"{o}lder continuous. Since the second part in the above expression, i.e., $(\sigma|v|^{2\sigma-2}\overline{v})\nabla{v}(\textbf{x})$, is the conjugate of the first part, i.e., $(\sigma|v|^{2\sigma-2}v)\nabla\overline{v}(\textbf{x})$, we only need to estimate the $H^{\frac{1}{2}}$-norm of the first part. Then, we obtain
\begin{equation*}
		\begin{aligned}\label{eq3.9-new2}
			&|\nabla f_{1}(v)|_{H^{\frac{1}{2}}}^2 \\
			&\lesssim\int_{\Omega}\int_{\Omega}\frac{|\nabla f_{1}(v(\textbf{x}+\textbf{y}))-\nabla f_{1}(v(\textbf{y}))|^2}{|\textbf{x}|^{3+1}}\mathrm{d} \textbf{x} \mathrm{d} \textbf{y} \\	&\lesssim\int_{\Omega}\int_{\Omega}\frac{\left|[(\sigma|v|^{2\sigma-2}v)(\textbf{x}+\textbf{y})-(\sigma|v|^{2\sigma-2}v)(\textbf{y})]\nabla\overline{v}(\textbf{x}+\textbf{y})
				+(\sigma|v|^{2\sigma-2}{v})(\textbf{y})(\nabla\overline{v}(\textbf{x}+\textbf{y})-\nabla\overline{v}(\textbf{y}))\right|^2}{|\textbf{x}|^{4}}\mathrm{d} \textbf{x} \mathrm{d} \textbf{y} \\			&\lesssim\int_{\Omega}\int_{\Omega}\frac{|v(\textbf{x}+\textbf{y})-v(\textbf{y})|^{4\sigma-2}\cdot|\nabla\overline{v}(\textbf{x}+\textbf{y})|^{2}}{|\textbf{x}|^{4}}\mathrm{d} \textbf{x} \mathrm{d}\textbf{y} \quad\quad\text{($\sigma|v|^{2\sigma-2}v$ is $(2\sigma-1)-$H\"{o}lder continuous.)} \\
			&\quad+\int_{\Omega}\int_{\Omega}\frac{|(\sigma|v|^{2\sigma-2}v)(\textbf{y})|^{2}\cdot|\nabla\overline{v}(\textbf{x}+\textbf{y})-\nabla\overline{v}(\textbf{y})|^2}{|\textbf{x}|^{4}}\mathrm{d} \textbf{x} \mathrm{d} \textbf{y} \\
			&\lesssim\int_{\Omega}\int_{\Omega}\frac{|\int_{0}^{1}\nabla v(\textbf{y}+\theta \textbf{x})\textbf{x}\mathrm{d}\theta|^{1+\varepsilon}\cdot |\nabla\overline{v}(\textbf{x}+\textbf{y})|^{2}}{|\textbf{x}|^{4}}\mathrm{d} \textbf{x} \mathrm{d} \textbf{y}+\|\nabla v\|^{2}_{H^{\frac{1}{2}}}\quad \text{($0<\theta<1, \sigma>\frac{3}{4}$)} \\
&\lesssim \int_{\Omega}\frac{|\textbf{x}|^{1+\varepsilon}}{|\textbf{x}|^4}\mathrm{d} \textbf{x}\int_{\Omega}\int_{0}^{1}|\nabla v(\textbf{y}+\theta\textbf{x})|^{1+\varepsilon}\cdot|\nabla\overline{v}(\textbf{x}+\textbf{y})|^{2}\mathrm{d}\theta\mathrm{d} \textbf{y}+\|v\|^{2}_{H^{\frac{3}{2}}} \\			
			&\lesssim\int_{\Omega}\frac{1}{|\textbf{x}|^{3-\varepsilon}}\mathrm{d} \textbf{x}\|\nabla v\|_{L^{3+\varepsilon}}^{3+\varepsilon}+\|v\|^{2}_{H^{\frac{3}{2}}}\quad \text{(Using H\"{o}lder's inequality)} \\
&\lesssim \|\nabla v\|_{L^{3+\varepsilon}}^{3+\varepsilon}+\|v\|^{2}_{H^{\frac{3}{2}}}\quad \text{($\varepsilon>0$, the integral $\int_{\Omega}\frac{1}{|\textbf{x}|^{3-\varepsilon}}\mathrm{d} \textbf{x}$ converges in 3D)} \\
			&\lesssim \|\nabla v\|_{H^{\alpha-1}}^{3+\varepsilon}+\|v\|^{2}_{H^{\frac{3}{2}}} \lesssim \|v\|_{H^{\alpha}}^2.			
		\end{aligned}
\end{equation*}
In the final step of the above inequality, by the embedding theorem ($H^{s}\hookrightarrow L^{p}, p=\frac{2d}{d-2s}$), we need $\varepsilon \leq \frac{6\alpha-9}{5-2\alpha}$ to ensure $\|\nabla v\|_{L^{3+\varepsilon}}\lesssim \|\nabla v\|_{H^{\alpha-1}}$. Thus $0<\varepsilon\leq \frac{6\alpha-9}{5-2\alpha}$ and $\frac{3}{2}<\alpha<\frac{5}{2}$.


\end{document}